\documentclass[12pt]{amsart}
\usepackage{amssymb}
\usepackage{amscd}
\usepackage{amsmath,amssymb}

\usepackage{graphicx}


\theoremstyle{plain}
\newtheorem{thm}{Theorem}
\newtheorem{cor}{Corollary}
\newtheorem{lem}{Lemma}[subsection]
\newtheorem{prop}{Proposition}[subsection]
\theoremstyle{definition}
\newtheorem{defn}{Definition}[subsection]
\newtheorem{rem}{Remark}

\renewcommand{\int}{\operatorname{int}}

\newcommand{\arf}{\mathrm{Arf}}

\newcommand{\Z}{\mathbb{Z}}
\newcommand{\N}{\mathbb{N}}

\newcommand{\cT}{\mathcal{T}}
\newcommand{\cW}{\mathcal{W}}

\begin{document}

\title{Stable concordance of knots in 3--manifolds}

\author{Rob Schneiderman}
\address{Department of Mathematics and Computer Science,
Lehman College, \\City University of New York, New York NY}

\begin{abstract}
Knots and links in 3-manifolds are studied by applying
intersection invariants to singular concordances. The resulting
link invariants generalize the Arf invariant, the mod 2
Sato-Levine invariants, and Milnor's triple linking numbers.
Besides fitting into a general theory of Whitney towers, these
invariants provide obstructions to the existence of a singular
concordance which can be homotoped to an embedding after
stabilization by connected sums with $S^2\times S^2$. Results
include classifications of stably slice links in orientable
3-manifolds, stable knot concordance in products of an orientable
surface with the circle, and stable link concordance for many
links of null-homotopic knots in orientable 3-manifolds.

\end{abstract}


\maketitle

\section{Introduction}
A generic homotopy of a knotted circle in a 3-manifold $M$ traces
out a properly immersed annulus in the product $M\times I$ of $M$
with an interval. Knot invariants can be extracted from homotopy
invariants of such a singular concordance by factoring out the
contributions from self-homotopies of knots. For instance, the
(relative) self-linking invariants of \cite{S} are determined by
the self-intersection numbers of immersed annuli in the group ring
$\Z[\pi_1M]$, and are characterized as the complete obstruction to
finding an ``improved'' singular concordance in the sense that all
of its singularities are paired by Whitney disks. In this setting,
we will describe how a generalization of the intersection
invariant of \cite{ST1} can be used to define ``higher order''
linking invariants which give obstructions to building a ``higher
order Whitney tower'', meaning that intersections between annuli
and the interiors of previously existing Whitney disks can be
paired by a second layer of Whitney disks. As a corollary, we can
in many cases determine which links can cobound a collection $A$
of properly immersed annuli in $M\times I$ such that $A$ is
homotopic (rel boundary) to an embedding after stabilization by
taking connected sums with copies of $S^2\times S^2$. If one end
of such a \emph{stable concordance} is the unlink then it can be
capped off to form \emph{stable slice} disks.

Besides detecting many infinite families of non-concordant knots
and links, the invariants defined here will give:
\begin{itemize}
  \item A complete characterization of links in orientable
  3-manifolds which are stably slice.
  \item A classification of stable concordance for many
  links of null-homotopic knots in orientable 3-manifolds with torsion-free fundamental group.
  \item A classification of stable concordance for knots
  in products $F\times S^1$ of an orientable surface with the
  circle.
\end{itemize}
The approach taken here fits into the developing theory of Whitney
towers in 4-manifolds (\cite{CST,CST2,S1,S2,ST1,ST2}), but of
equal interest in the current setting is how the topology of a
3-manifold interacts with its knot theory. This interaction is
reflected in the indeterminacies in the invariants coming from
self-homotopies of knots, which can be studied using classical
3-dimensional structure theorems, following the approach of
\cite{K,KL1,KL2}. It turns out that these indeterminacies depend
only on lower order invariants which can often be easily computed
in the 3-manifold.

The algebraic linking invariants of \cite{S} are defined for all
knots in a large family of orientable 3-manifolds. For a link of
null-homotopic knots the invariants reduce to the usual
equivariant linking form, but for knots in non-trivial (free)
homotopy classes these invariants are \emph{relative} invariants,
which compare homotopic knots to a chosen ``base knot''. One of
the main observations of \cite{S} is that although such a choice
of base knot is not canonical, there are certain preferred knots
which ``maximize'' the geometric characterization of the
invariants by ``minimizing'' the indeterminacies. These knots are
called \emph{spherical knots} in \cite{S} because all their
corresponding indeterminacies come from intersections between the
knots and 2-spheres in the 3-manifold. (In general, knot
self-homotopies project to tori in $M$ which can contribute
indeterminacies even when $M$ is irreducible.) It is only with
respect to spherical knots that the invariants of \cite{S}
classify \emph{order 1 Whitney concordance}, i.e. give the
complete obstruction to the existence of a singular concordance
having all double points paired by Whitney disks.

In the present paper we take a more streamlined approach by
directly prescribing the analogous ``indeterminacy-minimizing''
base knots (and links) which give the desired classification of
stable concordance and equivalent characterizations in terms of
Whitney towers. Note that we now have many more equivalence
classes to examine; for instance in a non-simply-connected
3-manifold there are infinitely many order 1 Whitney concordance
classes of null-homotopic knots. We will describe how to choose
appropriate knots so that the resulting indeterminacies only
depend on the given equivalence class. These indeterminacies will
be computable in terms of intersections between knots and
2-spheres, disks, and annuli in $M$.

The rest of this introduction will introduce enough terminology to
state the main results, leaving detailed definitions and proofs to
the body of the paper. The reader should note that there are some
essentially cosmetic differences in notation between this paper
and the closely related papers \cite{S} and \cite{ST1}. The
notation used here is chosen to fit with the more recent
literature on Whitney towers, and clarifications will be made as
terminology is introduced. While a familiarity with \cite{S} and
\cite{ST1} is certainly helpful, the present paper is to a large
extent self-contained, including sketches of much relevant
background material. In particular, properties of Whitney disks
are described in detail, and also well-illustrated in the
examples.

We work in the smooth oriented category, with orientations usually
suppressed from notation. The unit interval $I=[0,1]$ will be
reparametrized without mention. When possible we follow the
convention of knot theory to confuse an immersion with its image.


\subsection{Intersection trees and stable embeddings}
\label{subsec:intro-low-order-ints-stable-embeddings} The homotopy
invariant denoted by $\tau$ in \cite{ST1} was defined for immersed
2-spheres in 4-manifolds with vanishing Wall intersection
invariant by counting ``higher order'' intersections between
Whitney disks and the 2-spheres. This invariant has since been
shown to fit into an obstruction theory for immersed surfaces in
4-manifolds in terms of Whitney towers, where an order $n$ Whitney
tower $\cW$ has an order $n$ obstruction $\tau_n(\cW)$ which lives
in an abelian group generated by trivalent trees decorated by
fundamental group elements (\cite{CST,CST2,S1,S2,ST2}). The
vanishing of $\tau_n(\cW)$ implies the existence of an order $n+1$
Whitney tower on the underlying immersed surfaces. As discussed
next and described in detail in Section~\ref{sec:int-invariants},
in this language $\tau_0(\cW)$ corresponds to Wall's intersection
form, and $\tau_1(\cW)$ corresponds to the $\tau$ in \cite{ST1}.

Let $A=A_1,A_2,\cdots,A_m$ be a collection of properly immersed
simply connected compact oriented surfaces in an oriented
4-manifold $X$. Such an $A$ is called a \emph{Whitney tower of
order zero}. Wall's intersection $\lambda(A_i,A_j)$ and
self-intersection $\mu(A_i)$ invariants count signed elements of
$\pi_1X$ associated to sheet-changing paths through the
intersection points of $A$. The equivalent \emph{order zero
intersection tree} $\tau_0(A)$ takes values in a group
$\cT_0(\pi_1X)$ of decorated order zero trees (oriented edges
labelled by elements of $\pi_1X$), and gives the complete
obstruction to the existence of an \emph{order 1 Whitney tower}
$\cW$ on $A$, where $\cW$ consists of the $A_i$ together with a
collection of Whitney disks pairing all the intersections and
self-intersections among the $A_i$.

For $A$ admitting such an order one Whitney tower $\cW$, \emph{the
order one intersection tree} $\tau_1(A):=\tau_1(\cW)$ takes values
in an abelian group $\cT_1(\pi_1X)/\mbox{INT}(A)$ generated by
decorated order one trees (unitrivalent Y-trees having a single
trivalent vertex). Here each Y-tree $t_p\in\tau_1(\cW)$ corresponds
to a transverse intersection point $p$ between an $A_k$ and the
interior of a Whitney disk $W_{(i,j)}$ pairing intersections between
$A_i$ and $A_j$. The univalent vertices of $t_p$ inherit the labels
$i$, $j$, and $k$; and the oriented edges of $t_p$ are decorated by
elements of $\pi_1X$ determined by sheet-changing paths through
$W_{(i,j)}$ (see Figure~\ref{Y-wdisk-labelled-with-tree} and
Section~\ref{subsec:tau1-def} below). The target group includes
\emph{intersection relations} INT($A$) which are determined by the
order zero invariants $\lambda(A_i,S)$ where $S$ ranges over
immersed 2-spheres representing generators of $\pi_2X$.
\begin{figure}
\centerline{\includegraphics[width=100mm]{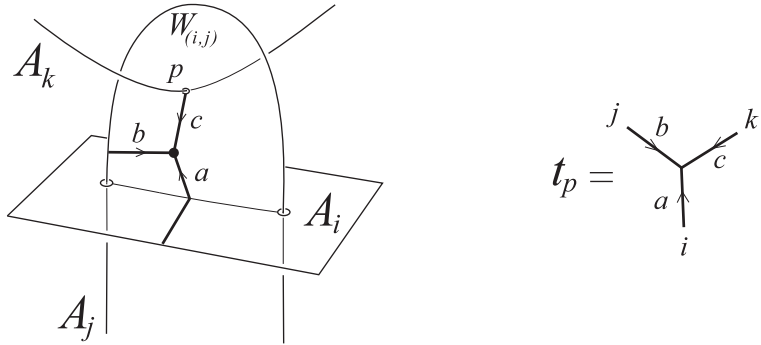}}
        \caption{A Whitney disk $W_{(i,j)}$ pairing intersections between surface sheets
         $A_i$ and $A_j$, and the Y-tree $t_p$ associated to an intersection point $p$
         between the interior of $W_{(i,j)}$
         and $A_k$.}\label{Y-wdisk-labelled-with-tree}
\end{figure}

As shown in \cite{ST1}, $\tau_1(A)$ does not depend on the choice
of $\cW$, and has the following geometric characterizations:
\begin{thm}[\cite{ST1}]\label{thm:tau}
The following are equivalent:
\begin{enumerate}
  \item $\tau_1(A)$ vanishes.
  \item $A$ is homotopic (rel boundary) to $A'$ admitting an order 2 Whitney
  tower.
  \item $A$ is homotopic (rel boundary) to $A'$ admitting a height 1 Whitney tower.
\end{enumerate}
\end{thm}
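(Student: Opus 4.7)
\emph{Proof plan.} My plan is to establish the cyclic chain of implications $(1) \Rightarrow (2) \Rightarrow (3) \Rightarrow (1)$. The direction $(3) \Rightarrow (1)$ is essentially by definition together with the (separately established) fact that $\tau_1(A)$ is independent of the choice of $\cW$ and invariant under homotopy rel boundary. The direction $(1) \Rightarrow (2)$ is a special case of the general principle already cited in the introduction, namely that vanishing of $\tau_n(\cW)$ implies the existence of an order $n{+}1$ Whitney tower; the substantive point is that the relations $\mbox{INT}(A)$ can be realized geometrically. The principal work lies in $(2) \Rightarrow (3)$, where an algebraically-paired configuration must be upgraded to one whose first-layer Whitney disks are disjointly embedded and disjoint from the $A_k$.

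For $(3) \Rightarrow (1)$, observe that on a height 1 Whitney tower the first-layer Whitney disks are disjointly embedded with interiors disjoint from the surface sheets, so no Y-trees arise and $\tau_1$ vanishes when computed on that particular tower; independence of $\tau_1(A)$ from the choice of $\cW$ then gives $\tau_1(A) = 0$. For $(1) \Rightarrow (2)$, start with any order 1 Whitney tower $\cW$ on $A$; the hypothesis expresses $\tau_1(\cW) \in \cT_1(\pi_1X)$ as a sum of $\mbox{INT}(A)$ generators together with antisymmetry relations on Y-trees. Realize each $\mbox{INT}(A)$ relation geometrically by tubing the relevant first-layer Whitney disk $W_{(i,j)}$ into an immersed $2$-sphere $S$ representing a generator of $\pi_2 X$: the tubing introduces a canceling pair of Y-trees, paired by a second-layer Whitney disk constructed from parallel copies of the sphere cap. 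Realize the antisymmetry (edge-label) relations on Y-trees by boundary twists and finger moves on the $W_{(i,j)}$, which likewise produce canceling Y-tree pairs that are paired by new second-layer Whitney disks. After these controlled modifications, every order 1 intersection is paired by a Whitney disk, yielding an order 2 Whitney tower on a surface $A'$ homotopic to $A$ rel boundary.

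For $(2) \Rightarrow (3)$, begin with an order 2 Whitney tower and perform Whitney moves guided by the second-layer Whitney disks to eliminate the paired intersections between the first-layer Whitney disks $W_{(i,j)}$ and the surface sheets $A_k$. Each such Whitney move removes a canceling pair of intersection points between $W_{(i,j)}$ and $A_k$ at the cost of pushing $W_{(i,j)}$ across $A_k$. After treating analogously any remaining intersections among the first-layer Whitney disks themselves (using their own second-layer pairings), put everything in general position so that the first-layer Whitney disks become disjointly embedded. The main obstacle is the careful control of these moves: a Whitney move along a second-layer disk can create new intersections among the $A_k$ or among the $W_{(i,j)}$ via the finger-move push-off, and one must verify both that the underlying homotopy class of $A$ rel boundary is preserved and that the procedure terminates with genuinely clean first-layer Whitney disks rather than merely redistributing the intersections. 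This geometric bookkeeping, together with the tubing constructions used in the INT step of $(1) \Rightarrow (2)$, is where the delicate part of the argument in \cite{ST1} resides.
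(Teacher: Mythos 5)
Your routing of the implications puts the substantive work in the wrong place, and the one leg you flag as "delicate" is in fact a genuine gap. In the paper's (and \cite{ST1}'s) organization, two of the three directions are essentially formal: (iii)$\Rightarrow$(ii) is vacuous, since a height 1 Whitney tower has \emph{no} intersections between the $A_k$ and Whitney disk interiors, so the empty second layer pairs them all; and (ii)$\Rightarrow$(i) is immediate from the definition of a Whitney disk, since two points of $A_k\cap \int W_{(i,j)}$ paired by a second-layer Whitney disk necessarily have opposite signs and equal group elements, hence equal trees $t_p=t_q$ with $\epsilon_p=-\epsilon_q$, so $\tau_1(\cW)=0$ already in $\cT_1(\pi_1X)$; homotopy invariance then gives $\tau_1(A)=0$. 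The single hard direction is (i)$\Rightarrow$(iii) (Theorem~2 of \cite{ST1}), proved by realizing all relations in the target by controlled manipulations (boundary/interior twists, tubing into 2-spheres, transfer moves) so that the unpaired points cancel geometrically, and then cleaning up. Your cycle (i)$\Rightarrow$(ii)$\Rightarrow$(iii)$\Rightarrow$(i) instead forces you to prove (ii)$\Rightarrow$(iii) directly, which is exactly the composite of the two nontrivial implications.

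The gap is in that (ii)$\Rightarrow$(iii) step. A Whitney move on $W_{(i,j)}$ guided by a second-layer disk $W$ removes the pair $W_{(i,j)}\cap A_k$ at the cost of creating, for every point of $\int W\cap A_l$, a new pair of intersections of the modified $W_{(i,j)}$ with $A_l$ --- and in an order 2 Whitney tower the interiors of the second-layer disks are completely unconstrained with respect to the $A_l$ (such intersections have order $\geq 2$). So each move trades the pair being removed for new pairs of precisely the kind you are trying to eliminate, and you give no argument that this terminates; the needed input is exactly the content of (i)$\Rightarrow$(iii), namely that the vanishing of $\tau_1$ lets one re-choose the pairings and Whitney disks so that the second layer can be taken disjoint from $A$ before the moves are performed. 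Two smaller points: your statement of (iii)$\Rightarrow$(i) overstates the definition of height 1 (the first-layer Whitney disks need not be disjointly embedded, only framed with interiors disjoint from the $A_k$ --- interior intersections among Whitney disks are permitted and contribute nothing to $\tau_1$, so the conclusion stands); and in (i)$\Rightarrow$(ii) the element $\tau_1(\cW)$ vanishes modulo \emph{all} of the relations AS, FR, OR, HOL and INT($A$), each of which must be realized, not only the INT and antisymmetry ones you name.
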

Here an \emph{order 2 Whitney tower} includes a second layer of
Whitney disks pairing all intersections between the $A_i$ and the
first layer of Whitney disks; and a \emph{height 1 Whitney tower} is
an order 1 Whitney tower such that all the interiors of the Whitney
disks are disjoint from the $A_i$. This geometric characterization
of $\tau_1$ corresponds to the fact that all the relations in the
target are realizable by controlled manipulations of the Whitney
tower.

\begin{defn}\label{defn:stable-embedding}
A collection $A$ of properly immersed surfaces $A_i$ in a
4-manifold $X$ can be \emph{stably embedded} if for some $n$ the
$A_i$ are homotopic (rel boundary) to pairwise disjoint embeddings
in the connected sum of $X$ with $n$ copies of $S^2\times S^2$.
\end{defn}
Here the sums are assumed to be taken along balls which are
disjoint from $A$. Note that in the literature `stable embedding'
usually means up to homology, rather than homotopy (see e.g.
\cite{B}).

The invariant $\tau_1(A)$ is an obstruction to homotoping $A$ to
an embedding, and the following corollary gives a stable converse:
\begin{cor}\label{cor:tau=stable embedding}
$\tau_1(A)$ vanishes if and only if $A$ can be stably embedded.
\end{cor}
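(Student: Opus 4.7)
The plan is to prove the two directions separately. The forward direction is a naturality observation: if $A$ is homotopic (rel boundary) to a pairwise disjoint embedding in $X' := X \# n(S^2\times S^2)$ for some $n$, then because the inclusion $X \hookrightarrow X'$ is an isomorphism on $\pi_1$, the group $\cT_1(\pi_1 X)$ is unchanged, and because the stabilizing summands are supported in balls disjoint from $A$, every new $\pi_2 X'$-generator $S$ coming from a stabilization satisfies $\lambda(A_i, S) = 0$ and contributes only the trivial relation. Hence the target group $\cT_1(\pi_1 X)/\mathrm{INT}(A)$ is the same whether computed in $X$ or in $X'$. Since $A$ embeds in $X'$, the trivial (empty) order 1 Whitney tower on $A$ there has $\tau_1=0$, so $\tau_1(A)=0$ in the common target.

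For the converse, suppose $\tau_1(A) = 0$. By Theorem~\ref{thm:tau}, $A$ is homotopic (rel boundary) to $A'$ admitting a height 1 Whitney tower $\cW$: a collection of Whitney disks $W_{(i,j)}$ pairing all double points of the $A'_i$, with the interiors of all $W_{(i,j)}$ disjoint from every sheet $A'_k$. To promote $\cW$ to an embedding of $A'$ one wants to perform the Whitney moves associated to the $W_{(i,j)}$; the only obstructions are that the $W_{(i,j)}$ may themselves be immersed (with self-intersections and mutual intersections) and may not be correctly framed.

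The key geometric step is then to use stabilization to eliminate these obstructions. Each $S^2\times S^2$ summand contains a pair of embedded, framed, transversely intersecting 2-spheres $S_1,S_2$. Tubing a Whitney disk $W_{(i,j)}$ into $S_1$ inside a small ball disjoint from $A'$ and from the other Whitney disks can be used, at the cost of new intersections with $S_2$, to (i) remove a self-intersection of $W_{(i,j)}$, (ii) remove an intersection of $W_{(i,j)}$ with another Whitney disk, or (iii) adjust the framing of $W_{(i,j)}$ by any integer via boundary twists absorbed into the tube. Because every modification occurs inside stabilization balls disjoint from $A'$, the height 1 condition is preserved throughout. After finitely many such stabilizations the $W_{(i,j)}$ become pairwise disjoint, embedded, correctly framed, and still clean of intersections with $A'$; the standard Whitney moves then produce a pairwise disjoint embedding of $A'$ in $X \# n(S^2\times S^2)$, establishing stable embeddability.

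The main technical obstacle I anticipate is the careful bookkeeping of the iterative Whitney-disk cleaning step: each stabilization trades one undesired feature of a Whitney disk for a controlled number of new features elsewhere, and one must check that the process terminates without ever destroying disjointness with the $A'_k$. This is by now a standard Whitney tower manipulation (\cite{CST}, \cite{ST1}, \cite{ST2}), so I would cite rather than reprove it.
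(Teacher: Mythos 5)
Your proof follows essentially the same route as the paper's: the forward direction is the same naturality observation (stabilization changes neither $\cT_1(\pi_1X)$ nor the INT relations, since the new $\pi_2$ generators miss $A$), and the converse uses Theorem~\ref{thm:tau}\,(i)$\Leftrightarrow$(iii) to get a height 1 Whitney tower and then the Norman trick in $S^2\times S^2$ summands to clean up the Whitney disks. Two small clarifications: your step (iii) is unnecessary and would actually be dangerous if needed --- Whitney disks in a Whitney tower are framed by definition, so the height 1 tower from Theorem~\ref{thm:tau} already has framed disks, whereas boundary twists create interior intersections with $A'$ and would destroy the height 1 condition. Also, the termination worry you flag does not arise: the paper first splits the Whitney disks by finger moves so each is embedded with at most one interior intersection with another Whitney disk, and then one Norman-trick stabilization per such point eliminates it \emph{without creating any new singularities} (the dual spheres are $0$-framed and embedded in balls disjoint from $A'$), so no iteration is required.
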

Corollary~\ref{cor:tau=stable embedding} follows from applying the
``Norman trick'' (\cite{No}) to the height 1 Whitney tower in
statement (iii) of Theorem~\ref{thm:tau}, as explained in
subsection~\ref{subsec:proof-of-cor-tau=stableembedding} below.

\subsection{Stably slice links}\label{subsec:intro-stable-slice-links}

A link $L$ in a 3-manifold $M$ is \emph{stably slice} if the
components of $L\subset M\times\{0\}$ bound a collection of
properly immersed disks in $M\times I$ which can be stably
embedded.

As discussed later in
subsection~\ref{sub-sec:proof-of-thm-tau(L)}, for a link $L$ of
null-homotopic knots in $M$ the concordance invariant
$\tau_0(L):=\tau_0(D)\in\cT_0(\pi_1M)$, where $D$ is any
collection of properly immersed disks in $M\times I$ bounded by
$L\subset M\times\{0\}$, is the complete obstruction to $L$
bounding an order 1 Whitney tower on immersed disks in $M\times
I$. For links with vanishing order $0$ obstruction, there is a
similarly defined order $1$ invariant:
\begin{thm}\label{thm:tau(L)}
For $L\subset M$ with vanishing $\tau_0(L)$, the \emph{order 1
intersection tree} $\tau_1(L):=\tau_1(D)\in\cT_1(\pi_1M)$ is a
well-defined concordance invariant of $L$.
\end{thm}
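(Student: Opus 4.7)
The plan is to split the theorem into two claims: \textbf{(a)} $\tau_1(D)$ depends only on $L$, not on the particular bounding collection $D$ (or on the order 1 Whitney tower used to compute it), and \textbf{(b)} $\tau_1$ is unchanged when $L$ is replaced by a concordant link. For fixed $D$, independence from the choice of order 1 Whitney tower is already provided by Theorem~\ref{thm:tau} applied to $X=M\times I$, so only the dependence on $D$ itself remains in part (a).

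For (a), I would compare two bounding collections $D,D'\subset M\times I$ with $\partial D=\partial D'=L$ by a doubling construction. Reflecting $D'$ through $M\times\{0\}$ and gluing along $L$ produces a collection $\Sigma:=D\cup_L(-D')$ of properly immersed closed 2-spheres (one per component of $L$) in the doubled 4-manifold $X':=M\times[-1,1]$. Order 1 Whitney towers $\cW$ on $D$ and $\cW'$ on $D'$ double to an order 1 Whitney tower on $\Sigma$, and a sign check (arising from the orientation reversal on the second factor) should give $\tau_1(\Sigma)=\tau_1(D)-\tau_1(D')$ under the natural identification $\pi_1X'\cong\pi_1M$. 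Each component $\Sigma_i$ is homotopic to an immersed 2-sphere representing a class in $\pi_2M$, and the target group $\cT_1(\pi_1M)$ in which $\tau_1(L)$ is defined (to be pinned down in Section~\ref{sec:int-invariants}) is set up precisely so that trees of this form are absorbed. Hence $\tau_1(\Sigma)=0$, yielding $\tau_1(D)=\tau_1(D')$.

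For (b), suppose $L$ and $L^*$ are concordant in $M$ via an embedded collection $C$ of annuli in $M\times I$ with $\partial C=L\sqcup(-L^*)$. Given any collection $D^*$ of properly immersed disks in $M\times I$ bounded by $L^*$ with $\tau_0(D^*)=0$, concatenating $C$ onto $D^*$ and reparametrizing produces a collection $D$ of properly immersed disks bounded by $L$ whose singular set is identical to that of $D^*$. Any order 1 Whitney tower on $D^*$ therefore extends, without any new intersections or Whitney disks, to an order 1 Whitney tower on $D$ with the identical set of order 1 trees, so $\tau_1(D)=\tau_1(D^*)$, and therefore $\tau_1(L)=\tau_1(L^*)$.

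The main obstacle is the doubling argument in (a). One must carry out a careful sign analysis to verify that the doubled Whitney tower on $\Sigma$ really computes the subtractive difference $\tau_1(D)-\tau_1(D')$, and match the $\pi_2M$-classes represented by the $\Sigma_i$ against the indeterminacies built into the definition of $\cT_1(\pi_1M)$ in the link setting. Concordance invariance, by contrast, is essentially automatic once one observes that an embedded concordance contributes no singularities.
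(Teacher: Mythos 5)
Your overall architecture (split into independence of $D$ and concordance invariance, with a doubling construction for the first part) is the same as the paper's, and your part (b) is essentially the paper's argument verbatim: an embedded concordance concatenated onto $D^*$ adds no singularities, so $\tau_1$ is unchanged (the paper does add the caveat that equality is only up to the change-of-whisker action on $\cT_1(\pi_1M)$, which you should also note). The problem is in part (a), at the step where you claim $\tau_1(\Sigma)=0$. Your stated reason --- that each $\Sigma_i$ represents a class in $\pi_2M$ and ``the target group $\cT_1(\pi_1M)$ is set up precisely so that trees of this form are absorbed'' --- is not the mechanism, and it cannot be: the theorem asserts that $\tau_1(L)$ lives in $\cT_1(\pi_1M)$ itself, not in a quotient by INT relations, so nothing in the target is available to absorb $\tau_1(\Sigma)$. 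The INT relations record the effect of tubing Whitney disk \emph{interiors} into spheres generating $\pi_2X$; they do not kill $\tau_1$ of a collection of immersed spheres just because those spheres happen to represent classes in $\pi_2M$.

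The missing ingredient is a genuinely 3-dimensional input: by the Sphere Theorem, $\pi_2M$ is generated as a $\pi_1M$-module by disjointly embedded spheres, and tubing these generators together inside $M\times I$ produces, for \emph{any} $m$ classes in $\pi_2(M\times I)\cong\pi_2M$, representatives that are pairwise disjoint embeddings (this is Lemma~\ref{embedded-sphere-lemma} in the paper, quoted from \cite{S}). Granting that, $\Sigma$ is homotopic to a disjoint embedding, so $\tau_1(\Sigma)=0$ by the homotopy invariance of $\tau_1$ --- a geometric vanishing, not an algebraic absorption. The same lemma is what shows that the INT($D$) relations are trivial here, which you need anyway to justify placing $\tau_1(L)$ in $\cT_1(\pi_1M)$ rather than in a quotient. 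Without this lemma your argument at best yields well-definedness modulo an unidentified subgroup, which is a weaker statement than the theorem. Everything else (the sign bookkeeping $\tau_1(\Sigma)=\pm(\tau_1(D)-\tau_1(D'))$ coming from $\tau_1(-A)=-\tau_1(A)$ and the reorientation of one copy of $M\times I$) is as in the paper.
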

Note that in this setting the intersection relations $INT(D)$ turn
out to be trivial so $\tau_1(L)$ takes values in $\cT_1(\pi_1M)$.
We get the following characterization of stably slice links.
\begin{cor}\label{cor:stable-slice}
$L$ is stably slice if and only if $\tau_1(L)=0\in\cT_1(\pi_1M)$.
\end{cor}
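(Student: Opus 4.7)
The plan is to deduce Corollary~\ref{cor:stable-slice} as a direct consequence of Theorem~\ref{thm:tau(L)} combined with Corollary~\ref{cor:tau=stable embedding}, using the definition of stably slice as the bridge between 3-dimensional link concordance and the 4-dimensional stable embedding theory already developed in subsection~\ref{subsec:intro-low-order-ints-stable-embeddings}.

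For the ``only if'' direction, suppose $L$ is stably slice. By definition there is a collection $D$ of properly immersed disks in the 4-manifold $M\times I$, bounded by $L\subset M\times\{0\}$, which can be stably embedded. Viewing $D$ as an order zero Whitney tower, Corollary~\ref{cor:tau=stable embedding} gives $\tau_1(D)=0$. Before quoting Theorem~\ref{thm:tau(L)} to pass from $\tau_1(D)$ to $\tau_1(L)$, I need to know that $\tau_1(L)$ is defined, i.e.\ that $\tau_0(L)=0$. This falls out of Theorem~\ref{thm:tau}: vanishing of $\tau_1(D)$ produces a homotopy (rel boundary) of $D$ to some $D'$ supporting an order 2 Whitney tower, which is in particular an order 1 Whitney tower, so $\tau_0(D')=0$; since $\tau_0$ is a concordance invariant of $L$ (as reviewed just before Theorem~\ref{thm:tau(L)}), we conclude $\tau_0(L)=\tau_0(D)=\tau_0(D')=0$. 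Then Theorem~\ref{thm:tau(L)} yields $\tau_1(L)=\tau_1(D)=0$.

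For the ``if'' direction, assume $\tau_1(L)=0\in\cT_1(\pi_1M)$. The very fact that $\tau_1(L)$ is declared presupposes $\tau_0(L)=0$, so $L$ bounds some collection $D$ of properly immersed disks in $M\times I$ admitting an order 1 Whitney tower, and by Theorem~\ref{thm:tau(L)} the invariant $\tau_1(L)$ may be computed from this $D$, giving $\tau_1(D)=0$. Applying Corollary~\ref{cor:tau=stable embedding} in $X=M\times I$ produces a stable embedding of $D$ rel boundary, and this is precisely the condition that $L$ is stably slice.

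The only delicate bookkeeping is the assertion built into Theorem~\ref{thm:tau(L)} that the intersection relations $\mathrm{INT}(D)$ are trivial for disks in $M\times I$ bounded by a link in $M\times\{0\}$, so that $\tau_1(L)$ actually takes values in $\cT_1(\pi_1M)$ rather than in a proper quotient. I would verify this by pushing $D$ into a collar of $M\times\{0\}$ and observing that every 2-sphere in $M\times I$ (representing a class in $\pi_2(M\times I)=\pi_2M$) can be made disjoint from this collar, so the order zero pairings $\lambda(D_i,S)$ that would generate $\mathrm{INT}(D)$ all vanish. Beyond this, the argument is essentially formal: all the geometric content (Norman trick, height 1 realizability, order 2 vs.\ height 1 equivalence) has been packaged upstream, so the main obstacle in writing a complete proof is not a new construction but rather confirming that stabilization by $S^2\times S^2$ summands taken in balls disjoint from $D$ really does not affect $\tau_0(D)$ or $\tau_1(D)$, which is already built into the statement of Corollary~\ref{cor:tau=stable embedding}.
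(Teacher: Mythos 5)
Your derivation is the paper's: Corollary~\ref{cor:stable-slice} is stated without a separate proof precisely because it is the formal combination of Theorem~\ref{thm:tau(L)} (well-definedness of $\tau_1(L):=\tau_1(D)$, with trivial INT($D$)) and Corollary~\ref{cor:tau=stable embedding} applied to $X=M\times I$. Two small points. First, there is a circularity in your ``only if'' direction as written: you invoke Corollary~\ref{cor:tau=stable embedding} to conclude $\tau_1(D)=0$ and only afterwards establish $\tau_0(L)=0$, but $\tau_1(D)$ is not defined until $\tau_0(D)=0$ is known (the corollary presupposes an order 1 Whitney tower on $A$). The fix is immediate and does not need Theorem~\ref{thm:tau}: a stably embedded $D$ is homotopic (rel boundary) to an embedding in the connected sum of $X$ with copies of $S^2\times S^2$, which has the same fundamental group, so homotopy invariance of $\tau_0$ gives $\tau_0(D)=0$ directly; then $\tau_1(D)$ is defined and Corollary~\ref{cor:tau=stable embedding} applies. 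Second, your sketch of why INT($D$) is trivial needs a small supplement: an arbitrary null-concordance $D$ need not be homotopic rel boundary into a collar of $M\times\{0\}$, since two null-homotopies of a component $L_i$ differ by an element of $\pi_2(M\times I)$; one therefore also needs the vanishing of the resulting sphere--sphere pairings, which is supplied by Lemma~\ref{embedded-sphere-lemma} -- and this is exactly how the paper disposes of INT($D$) in the proof of Theorem~\ref{thm:tau(L)}, so you may simply cite that step rather than reprove it.
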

If $\pi_1M$ is non-trivial and left-orderable, as many 3-manifold
groups are \cite{BRW}, then $\cT_1(\pi_1M)$ is isomorphic to
$\Z^\infty\oplus \Z_2^\infty$
(Proposition~\ref{prop:orderable-T0-normal-form} below). Each
element of $\cT_1(\pi_1M)$ can be realized by a link formed by
tying copies of the Borromean rings into a trivial link
(Figure~\ref{BoroRings-and-PullApart-fig}).
\begin{figure}
\centerline{\includegraphics[width=135mm]{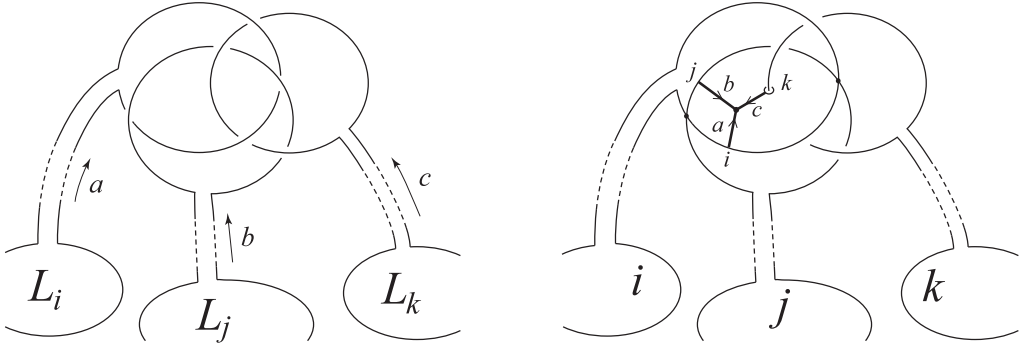}}
         \caption{Tying the Borromean rings into an unlink along bands
         representing elements $a$, $b$, and $c$ in $\pi_1M$
         yields a link $L$, with $\tau_1(L)$ represented by a
         Y-tree $t_p$ as in
         Figure~\ref{Y-wdisk-labelled-with-tree}. This is
         illustrated on the right, which shows $t_p$ in a Whitney
         disk for the cancelling pair of intersection points in a null
         homotopy of $L$.
         }\label{BoroRings-and-PullApart-fig}
\end{figure}

\begin{rem}[Finite type invariants] The reader familiar with finite
type invariants will recognize that operation of
Figure~\ref{BoroRings-and-PullApart-fig} can be effected by a
Y-graph clasper or clover surgery (\cite{H,GGP}). The links bounding
order 1 Whitney towers are exactly the $\pi$-\emph{algebraically
split} links which arise in the setting of finite type 3-manifold
invariants proposed by Garoufalidis and Levine in \cite{GL}. The
equivariant triple $\overline{\mu}$-invariants defined in \cite{GL},
which characterize \emph{surgery equivalence} of such links (in the
sense of \cite{Le}), correspond to the projection of $\tau_1(L)$ to
the $\Z^\infty$ factors of $\cT_1(\pi_1M)$. Thus the $\Z_2$ factors
of $\tau_1$ detect infinite families of surgery equivalent links
which are not (even stably) concordant.
\end{rem}

\begin{rem}[The Arf invariant] For a knot $K$ in a simply connected $M$,
$\tau_1(K)\in\cT_1(\pi_1M)\cong\Z_2$ is just the Arf invariant $\arf
(K)$, which is known to be the complete obstruction to stably
slicing $K$ (\cite{FK,Ma}). For $K$ null-homotopic in general $M$,
mapping $\pi_1M$ to the trivial group induces
$\cT_1(\pi_1M)\rightarrow\Z_2$ which takes $\tau_1(K)$ to $\arf
(K)$. It is easy to construct $K$ with trivial $\arf (K)$ but
non-trivial $\tau_1(K)$ by tying an even number of Borromean rings
(with varying $\pi_1M$ decorations) into the unknot. Similar
comments apply for the mod 2 reduction of the Sato-Levine invariant
of a 2-component link (\cite{Sato}).
\end{rem}

\begin{rem}
Since $\tau_1(-L)=-\tau_1(L)$, where $-L$ denotes $L$ with
reversed orientations on all components (fixing the orientation of
$M$), the $\Z^\infty$ factors in $\tau_1(L)$ represent
obstructions to the existence of a (stable) concordance between
$L$ and $-L$.
\end{rem}

\subsection{Stable
concordance of null-homotopic
links}\label{subsec:intro-stable-concordance}

Two links $L$ and $L'$ in $M$ are \emph{stably concordant} if
there exists a collection $A$ of properly immersed annuli in
$M\times I$, bounded by $L\subset M\times \{0\}$ and $L'\subset
M\times\{1\}$, such that $A$ can be stably embedded.

To classify stable concordance we will need to extend the
definition of $\tau_1$ to properly immersed \emph{annuli}
supporting an order 1 Whitney tower in such a way that
Theorem~\ref{thm:tau} and Corollary~\ref{cor:tau=stable embedding}
still hold. Restricting our attention to links of null-homotopic
knots corresponds to inessential annuli, that is, proper
immersions of annuli $(A,\partial A)\looparrowright(X,\partial X)$
which take the circle factors of $A$ to the trivial element of
$\pi_1X$. In this case, the new indeterminacies present in
$\tau_1(A)$ correspond to the boundaries of the Whitney disks
winding around the annulus, and are computable from $\partial
A\subset\partial X$. In the setting of links in 3-manifolds, these
new INT indeterminacies, and others coming from non-separating
2-spheres, turn out to only depend on the lower order linking
invariant $\tau_0(L)$, where the link $L\subset
\partial A$ consists of the half of the boundary components of $A$ sitting in
either end of $M\times I$.

For links of null-homotopic knots, order 1 Whitney concordance is
classified by $\tau_0(L)\in\cT_0(\pi_1M)$
(\ref{subsec:order1wconc} below and \cite{S1}). Any element
$z\in\cT_0(\pi_1M)$ can be realized as $\tau_0(L_z)$ for a link
$L_z$ created by adding clasps to the unlink, with the clasps
guided by loops determining the group elements in a representative
of $z$ (See Figure~\ref{clasplink-guidingarcs-fig}).
\begin{figure}
\centerline{\includegraphics[width=100mm]{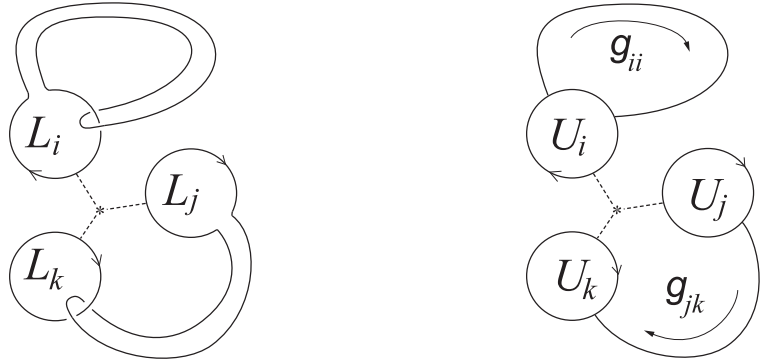}}
         \caption{On the left, a clasp link, built from the unlink on the right,
         using the indicated guiding arcs.}
         \label{clasplink-guidingarcs-fig}
\end{figure}

Fixing such $L_z$, for any $L$ with $\tau_0(L)=z$ we define the
\emph{relative order 1 intersection tree} $\tau_1(L_z,L)$:
$$
\tau_1(L_z,L):=\tau_1(A)\in\frac{\cT_1(\pi_1M)/\mbox{INT($z$)}}{\Phi(z)}
$$
where $A$ is any singular concordance between $L_z$ and $L$
admitting an order 1 Whitney tower. Here, the intersection
relations $\mbox{INT($z$)}:=\mbox{INT($A$)}$ turn out to only
depend on $z$; and the \emph{indeterminacy subgroup}
$\Phi(z)<\cT_1(\pi_1M)/\mbox{INT($z$)}$, which is determined by
order 1 intersections between 2-spheres and $L_z$, also turns out
to only depend on $z$ (Lemma~\ref{lem:link-sphere-pairing}, and
\ref{subsec:Phi-def}).

In irreducible 3-manifolds $\Phi(z)$ is trivial for all $z$, and in
many 3-manifolds the quotient by $\Phi(z)$ is sufficient to account
for all indeterminacies in the choice of $A$, so that
$\tau_1(L_z,L)$ is a well-defined link concordance invariant for all
$z$. However, as explained below in subsection~\ref{subsec:twisted
stabilizers}, slightly restricting $z$ will greatly expand the
number of cases for which $\tau_1(L_z,L)$ is well-defined. For now,
we observe that there is an action of the $m$-fold cartesian product
$\Psi :=(\pi_1M)^m$ on $\cT_0(\pi_1M)$ which corresponds to choices
of basings of the $m$ components of $L$ in computing $\tau_0(L)$.
The stabilizer $\Psi_z<\Psi$ of $z$ is called \emph{untwisted} if
all elements of $\Psi_z$ induce the trivial permutation of the
elements of $\pi_1M$ in a representative of $z$, and \emph{twisted}
otherwise (see \ref{subsec:Psi-stabilizers} below). This notion of
twisting will be seen to correspond to M\"{o}bius bands and singular
Seifert fibers, and in general a twisted stabilizer $\Psi_z$ can
only occur for certain special forms of $z$.

As explained in Section~\ref{sec:tau1(L,Lz)-proof}, by choosing a
clasp link $L_z$ in each order 1 Whitney concordance class we get
the following classification theorem for many links of
null-homotopic knots:
\begin{thm}\label{thm:tau(Lz,L)}
 For any orientable 3-manifold $M$ with $\pi_1M$ torsion-free,
and any $z\in \cT_0(\pi_1M)$ with $\Psi_z$ untwisted, there exists
a link $L_z$ in $M$ such that for any links $L$ and $L'$ with
$\tau_0(L)=\tau_0(L')=z\in\cT_0(\pi_1M)$ the following are
equivalent:
\begin{enumerate}
  \item $\tau_1(L_z,L)=\tau_1(L_z,L')$.
  \item $L$ and $L'$ are stably concordant.
  \item $L$ and $L'$ are order 2 Whitney concordant.
  \item $L$ and $L'$ are height 1 Whitney concordant.
\end{enumerate}
\end{thm}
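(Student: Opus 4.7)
The plan is to deduce Theorem~\ref{thm:tau(Lz,L)} from an annulus version of Theorem~\ref{thm:tau} and Corollary~\ref{cor:tau=stable embedding}, applied to singular concordances built by stacking the clasp-link base $L_z$ against $L$ and $L'$. First I need to establish that $\tau_1(L_z,L)$ is a well-defined concordance invariant. Given two singular concordances $A_0,A_1$ from $L_z$ to $L$, each admitting an order 1 Whitney tower, reverse $A_1$ and concatenate with $A_0$ in $M\times I$ to produce a collection of inessential annuli $A_0\cup\overline{A_1}$ from $L_z$ to itself. The difference $\tau_1(A_0)-\tau_1(A_1)$ equals the order 1 intersection invariant of this closed-up collection in $\cT_1(\pi_1M)$. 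The indeterminacies that arise when extending $\tau_1$ to inessential annuli split into two families: intersection relations $\mbox{INT}(A_0\cup\overline{A_1})$ coming from tubing immersed 2-spheres into the annuli, and new boundary-twist relations coming from Whitney disk boundaries that wind around the circle factors of the annuli. Since the boundary $L_z$ only depends on $z$, both families should reduce to data depending only on $z$; this is the content of the definitions of $\mbox{INT}(z)$ and $\Phi(z)$, with the latter capturing the sphere-based boundary contributions isolated in Lemma~\ref{lem:link-sphere-pairing}. The untwisted hypothesis on $\Psi_z$ enters here to rule out boundary indeterminacies arising from self-homotopies of $L_z$ that non-trivially permute its components' basings---otherwise such permutations would correspond to M\"{o}bius-band reparametrizations that would create extra relations. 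Concordance invariance of $\tau_1(L_z,L)$ then follows by stacking a concordance from $L$ to $L'$ onto any choice of $A$.

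With well-definedness in hand, I establish the four-way equivalence as a cycle. For (iii)$\Leftrightarrow$(iv), I apply the annulus version of Theorem~\ref{thm:tau}: the controlled finger moves and Whitney moves used in \cite{ST1} to promote an order 2 Whitney tower to a height 1 Whitney tower depend only on the tree structure and work for annuli as well as disks. For (iv)$\Rightarrow$(ii), I apply the Norman trick to each Whitney disk in the height 1 tower, tubing into $S^2\times S^2$ summands away from the annuli, exactly as in the proof of Corollary~\ref{cor:tau=stable embedding}. For (ii)$\Rightarrow$(i), a stable concordance $C$ between $L$ and $L'$ satisfies $\tau_1(C)=0$ by the annulus version of Corollary~\ref{cor:tau=stable embedding}, so concatenating $C$ with a singular concordance $A$ from $L_z$ to $L$ yields a singular concordance from $L_z$ to $L'$ whose order 1 intersection tree agrees with $\tau_1(A)$ modulo $\mbox{INT}(z)$ and $\Phi(z)$. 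Finally, for (i)$\Rightarrow$(iii), given representatives $A$ from $L_z$ to $L$ and $A'$ from $L_z$ to $L'$ with $\tau_1(A)=\tau_1(A')$ in the quotient, concatenate $\overline{A}$ with $A'$ to obtain a singular concordance from $L$ to $L'$ whose $\tau_1$ vanishes modulo the relations; since all the relations in the target are geometrically realizable by controlled Whitney-tower manipulations, this concordance can be improved to admit an order 2 Whitney tower.

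The main obstacle will be the well-definedness step, specifically identifying all indeterminacies in $\tau_1(A)$ with $\mbox{INT}(z)$ and $\Phi(z)$. This requires a systematic catalogue of the new boundary phenomena present for annuli but absent for disks, and a proof that these all factor through $z$ when $\Psi_z$ is untwisted. In the twisted case, additional indeterminacies from non-trivial basing-permutations would appear and the target group would no longer be an invariant of $z$ alone, which is why the theorem is restricted to untwisted $\Psi_z$. The torsion-free hypothesis on $\pi_1M$ is used to give $\cT_1(\pi_1M)$ a clean algebraic description (so that the quotient by $\Phi(z)$ is manageable) and to avoid torsion-induced relations on tree decorations when applying the cycle-of-implications argument above.
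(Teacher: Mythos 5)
Your overall architecture --- concatenating concordances, reducing everything to the annulus versions of Theorem~\ref{thm:tau} and Corollary~\ref{cor:tau=stable embedding}, and cataloguing the new INT/boundary indeterminacies as functions of $z$ --- matches the paper's strategy, and your cycle of implications among (i)--(iv) is essentially the one used. But there is a genuine gap at the heart of the well-definedness step and of (i)$\Rightarrow$(iii): you treat the closed-up self-concordance $A_0\cup\overline{A_1}$ of $L_z$ as if its $\tau_1$ were automatically an indeterminacy ``depending only on $z$,'' i.e.\ as if it were homotopic to $L_z\times I$ summed with $2$-spheres. That conclusion (Lemma~\ref{lem:straightening-lemma}) is only available when the latitudes of the self-concordance represent the trivial element of $\Psi=(\pi_1M)^m$. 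In general the latitudes of $A_0-A_1$ determine a possibly non-trivial $\psi$, which Lemma~\ref{lem:lat-in-stabilizer} places in the stabilizer $\Psi_z$ but does not trivialize; correspondingly the correct comparison is $\tau_1(A_0)-\psi\cdot\tau_1(A_1)$, and statement (i) is only meaningful up to the $\Psi_z$-action --- a point your proposal never registers.

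The missing ingredient is the paper's Lemma~\ref{lem:untwisted-htpies}: for untwisted $\Psi_z$, every $\psi\in\Psi_z$ is realized by a self-homotopy $H^0_{\psi}$ of the clasp link $L_z$ admitting an order $1$ Whitney tower with $\tau_1=0$. This is a nontrivial construction (undo the clasps, isotope the unlink around loops representing $\psi$ while dragging the guiding arcs --- untwistedness is exactly the condition that the arcs return to themselves --- then redo the clasps), and controlling the Whitney disk framings requires Chernov's mod $2$ vanishing of affine self-linking numbers. Only by inserting $H^0_{\psi^{-1}}$ into the composition $A_0-A_1+H^0_{\psi^{-1}}$ does one obtain trivial latitudes so that Lemma~\ref{lem:straightening-lemma} applies and the difference lands in $\Phi(z)$; the same insertion is needed in your (i)$\Rightarrow$(iii) step to make $\tau_1(-A+H^0_{\psi^{-1}}+A')$ vanish. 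Your remark that untwistedness ``rules out'' extra boundary indeterminacies has the logic inverted: its role is constructive (realizing each $\psi\in\Psi_z$ cheaply), not exclusionary, and without it the argument does not close.
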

Here the equality in statement (i) is understood to be up to an
action of $\Psi_z$ by left multiplication. By
Proposition~\ref{prop:twisted-stabilizers}, $\Psi_z$ will be
untwisted for all $z$ if $M$ contains no circle bundles over
non-orientable surfaces with orientable total space, and no
Seifert fibered submanifolds with singular fibers. The notions of
order $n$ (resp. height $n$) Whitney concordance refer to the
equivalence relations on links of cobounding immersed annuli
supporting an order $n$ (resp. height $n$) Whitney tower in
$M\times I$ (c.f. Theorem~\ref{thm:tau}).

It follows that $\tau_1(L_z,\cdot)$ is a concordance invariant,
and the effect of changing the choice of $L_z$ is computed in
Section~\ref{sec:examples}. Again, all elements in the target can
be realized by tying Borromean rings into $L_z$, but the INT($z$)
and $\Phi(z)$ relations show that many such Borromean surgeries do
not change the stable concordance class of $L_z$.

The presence of twisted stabilizers can lead to additional
indeterminacies coming from self-homotopies of $L_z$ as
illustrated below in
subsection~\ref{subsec:knots-in-B-twisted-S1}. A general method
for dealing with such additional indeterminacies would be
interesting, as would be an extension of $\tau_1$ and
Theorem~\ref{thm:tau(Lz,L)} to the case where $\pi_1M$ has
torsion.

\subsection{Stable
concordance of essential
knots}\label{subsec:intro-essential-knots} In the case where
$M\cong F\times S^1$ is the product of an orientable surface
$F\neq S^2$ with the circle, the stable concordance classification
of knots is completed by Theorem~\ref{thm:essential-tau(Lz,L)} at
the end of Section~\ref{sec:essential-knots}, which extends the
results of Theorem~\ref{thm:tau(Lz,L)} to knots in arbitrary
non-trivial free homotopy classes. The corresponding extension of
$\tau_1$ to essential annuli requires two changes: To account for
choices of paths, the decorations on Y-trees are taken in coset
spaces of $\pi_1M$ by a cyclic group; and to account for choices
of Whitney disk boundaries, the INT relations must be again
modified. The resulting INT relations turn out to correspond to
solutions of certain Baumslag-Solitar equations -- which are
highly restricted in 3-manifold groups -- and can be computed in
terms of lower order intersections among annuli, as explained in
Section~\ref{sec:essential-knots}.


\section{Intersection trees and stable embeddings}\label{sec:int-invariants}
This section recalls the definitions of the intersection
invariants $\tau_0$ and $\tau_1$ for simply connected surfaces
immersed in 4-manifolds, and sketches the proof that $\tau_1$ is a
well-defined homotopy invariant -- emphasizing aspects that will
be relevant to the generalization of $\tau_1$ to immersed annuli.
Proofs of Corollary~\ref{cor:tau=stable embedding} and
Theorem~\ref{thm:tau(L)} from the introduction are given in
subsections \ref{subsec:proof-of-cor-tau=stableembedding} and
\ref{sub-sec:proof-of-thm-tau(L)}. Additional background material
on immersed surfaces in 4-manifolds can be found in Chapter 1 of
\cite{FQ} (and absorbed from the examples below in
Section~\ref{sec:examples}).

\subsection{The order zero intersection tree $\tau_0$}\label{subsec:tau0}
For $A=A_1,A_2,\cdots,A_m$ a collection of properly immersed
simply connected oriented surfaces in an oriented 4-manifold $X$,
Wall's hermitian intersection form $\lambda$, $\mu$ is defined as
follows. The surfaces are perturbed into general position and
equipped with \emph{whiskers} (basepoints joined by a path to the
basepoint of $X$). To each transverse intersection point $p\in
A_i\cap A_j$ is associated a fundamental group element
$g_p\in\pi_1X$ determined by a loop through $A_i$ and $A_j$ which
changes sheets at $p$. Summing over all such intersection points,
with the usual notion of the sign $\epsilon_p=\pm 1$, defines the
intersection and self-intersection ``numbers'':
$$
\lambda(A_i,A_j)=\sum_{p\in A_i\cap A_j}\epsilon_p\cdot
g_p\in\Z[\pi_1X]
$$
and
$$
\mu(A_i)=\sum_{p\in A_i\cap A_i}\epsilon_p\cdot
g_p\in\frac{\Z[\pi_1X]}{\Z[1]\oplus\{z-\overline{z}\}}.
$$
Modding out by the action of the involution $z\mapsto
\overline{z}$ on $\Z[\pi_1X]$ induced by the map $g\mapsto g^{-1}$
on $\pi_1X$ accounts for choices of orientations on the
sheet-changing loops through the self-intersections of $A_i$. The
relations killing $\Z[1]$ correspond to indeterminacies in the
$\mu(A_i)$ due to local cusp homotopies which create
self-intersections in $A_i$ whose double-point loops determine the
trivial element $1\in\pi_1X$.

Basic singularity theory shows that $\lambda(A_i,A_j)$ and
$\mu(A_i)$ are invariant under homotopy of $A$ (rel boundary).
Since the indeterminacy relations can be realized (by introducing
local cusp homotopies and orienting double-point loops
appropriately), the vanishing of $\lambda$ and $\mu$ implies that,
after perhaps a homotopy (rel boundary), all intersection points
of $A$ occur in \emph{cancelling pairs} having equal group
elements and opposite signs. As described in detail below, each
such cancelling pair admits a Whitney disk.

If Whitney disks can be found that are all disjoint, with embedded
interiors disjoint from $A$, and satisfying a normal framing
condition described below, then such Whitney disks can be used to
guide a homotopy of $A$ to an embedding. It can always be arranged
that the Whitney disks are disjoint and framed, but in general
they will have interior intersections with $A$. The obstruction
theory of Whitney towers attempts to pair up these higher order
intersections with higher order Whitney disks, and the resulting
invariants associated to such a tower of Whitney disks naturally
take values in groups generated by unitrivalent trees. These trees
are associated to unpaired intersection points, and sit as subsets
bifurcating down through the Whitney tower, with the trivalent
vertices corresponding to Whitney disks, and the edges
corresponding to sheet-changing paths.

In this language, the proper immersion $A$ is called a
\emph{Whitney tower of order zero}, and Wall's intersection form
is an invariant of order zero (since there are no Whitney disks
involved). In the notation of Whitney towers, the information
contained in $\lambda$ and $\mu$ is expressed as the order zero
\emph{intersection tree}
$$
\tau_0(A):=\sum \epsilon_p\cdot t_p\in\cT_0(\pi_1X)
$$
where each $t_p$ is an oriented edge (order zero trees have no
trivalent vertices) which is decorated by $g_p$ and has endpoint
vertices labelled by $i$ and $j$ for $p\in A_i\cap A_j$. We will use
the notation $(g)_{ij}$ to denote an order zero tree which is
decorated by $g$ and oriented from $i$ to $j$. The target
$\cT_0(\pi_1X)$ is the abelian group additively generated by such
decorated edges modulo two kinds of relations: The orientation
relations $(g)_{ij}=(g^{-1})_{ji}$ reflect the Hermitian nature of
the pairing $\lambda$ for $i\neq j$, and correspond for $i=j$ to the
relations $z=\overline{z}$ in the self-intersection invariants $\mu
(A_i)$ of the components of $A$. The framing relations $(1)_{ii}=0$
correspond to the relations $\Z[1]=0$ in the $\mu (A_i)$ by killing
any edges decorated by trivial group elements and having both
vertices labeled by $i$. (With these framing relations one has a
homotopy invariant; without the framing relations one has a
\emph{regular} homotopy invariant.)

Thus, $\tau_0(A)$ is exactly Wall's intersection form rewritten in
Whitney tower notation, and the vanishing of $\tau_0(A)$ is
equivalent to the existence of Whitney disks pairing all
intersection points in $A$ (perhaps after performing some cusp
homotopies). In the language of Whitney towers we say that such
$A$ admits an \emph{order 1 Whitney tower}. The Whitney disks in
an order one Whitney tower are required to have pairwise
disjointly embedded boundaries. The interiors of the Whitney disks
are allowed to be immersed, and may intersect each other as well
as $A$. All Whitney disks are required to be properly framed as
described next.

\subsection{Whitney disk framings.}\label{subsec:w-disk-framings}
Consider a cancelling pair of intersections $p$ and $q$ between
$A_i$ and $A_j$, with $g_p=g_q\in\pi_1X$ and
$\epsilon_p=-\epsilon_q$. The union of any pair of arcs, one in
$A_i$ and the other in $A_j$, each connecting $p$ and $q$ (but
avoiding all other singularities), forms a loop contractible in
$X$, and any (immersed) disk $W$ bounded by such a loop is a
\emph{Whitney disk}. Since $p$ and $q$ have opposite signs, the
restriction $\nu_{\partial W}$ of the normal disk bundle $\nu_W$
of $W$ in $X$ to $\partial W$ is the trivial bundle $S^1\times
D^2$ (if the signs of $p$ and $q$ were equal, then we would get
the non-orientable disk bundle over the circle).

\begin{figure}
\centerline{\includegraphics[width=135mm]{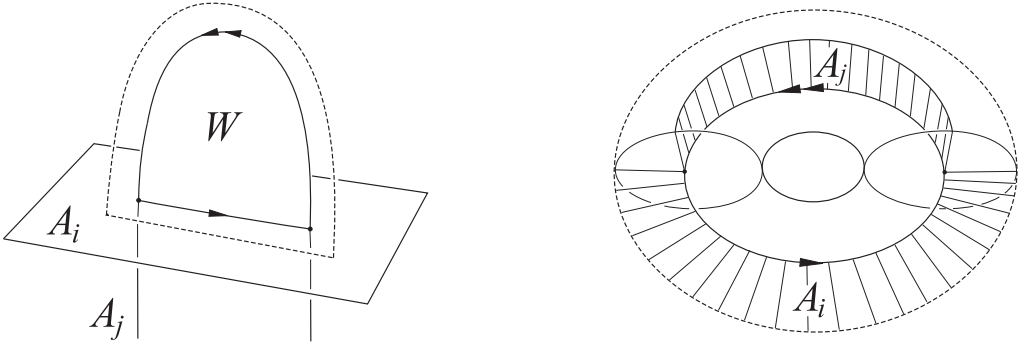}}
         \caption{The Whitney section over the boundary of a (zero-)framed Whitney disk is
         indicated by the dotted loop shown on the left for a clean Whitney disk $W$ in
         a 3-dimensional slice of 4-space, and on the right in the (pulled back) normal
         disk bundle over $\partial W$.}
         \label{Framing-of-Wdisk-fig}
\end{figure}
The trivial $D^2$-bundle $\nu_{\partial W}$ has a
nowhere-vanishing \emph{Whitney section} constructed as follows
(see Figure~\ref{Framing-of-Wdisk-fig}): Denote by $\partial_i W$
(resp. $\partial_j W$) the arc of $\partial W$ that lies in $A_i$
(resp. $A_j$). Over $\partial_i W$ choose a splitting of
$\nu_{\partial W}$ given by a vector field $v_i^t$ tangent to
$A_i$ and a vector field $v_i^n$ normal to both $W$ and $A_i$.
Over $\partial_j W$ we have a similar splitting $\{ v_j^t, v_j^n
\}$ of $\nu_{\partial W}$, and we may arrange that these
splittings coincide at $p$ and $q$ with $v_i^t=v_j^n$ and
$v_i^n=v_j^t$. The Whitney section of $\nu_{\partial W}$ is
constructed by taking $v_i^t$ over $\partial_i W$, and $v_j^n$
over $\partial_j W$. If this section of $\nu_{\partial W}$ can be
extended to a nowhere-vanishing section of $\nu_W$, then we say
that $W$ is \emph{framed}.

For a chosen orientation, the relative Euler number of the Whitney
section gives an integer obstruction to extending the section over
$W$, so ``framed'' really means ``zero-framed''. This obstruction
can always be killed at the cost of creating interior
intersections between $W$ and $A_i$ (or $A_j$) by a \emph{boundary
twisting} modification of $W$ which changes the obstruction by
$\pm 1$. Also, the framing obstruction can be changed by $\pm 2$
at the cost of creating an interior self-intersection of $W$ by
performing a cusp homotopy, called an \emph{interior twist}.
Boundary and interior twists are illustrated below in
Figure~\ref{clasp-link-boundary-twist-grey-fig} and
Figure~\ref{InteriorTwistPositiveEqualsNegative-fig} of
Section~\ref{sec:examples}.

\subsection{The order 1 intersection tree $\tau_1$}\label{subsec:tau1-def}
The invariant denoted $\tau$ in \cite{ST1} corresponds in the
general theory of Whitney towers (\cite{CST,CST2,ST2}) to the
order 1 intersection tree $\tau_1$. For a collection $A$ of
properly immersed simply connected surfaces with vanishing
$\tau_0(A)$, $\tau_1(A)$ is defined as follows. Choose an order 1
Whitney tower $\cW$ on $A$, with fixed whiskers on the $A_i$ and
on each of the Whitney disks in $\cW$. To each intersection point
$p\in W_{(i,j)}\cap A_k$ between $A_k$ and the interior of a
Whitney disk $W_{(i,j)}$ pairing intersections between $A_i$ and
$A_j$ is associated a trivalent $Y$-tree $t_p$ as illustrated in
Figure~\ref{Y-wdisk-labelled-with-tree}. The univalent vertices
are labelled by $i$, $j$ and $k$, and the oriented edges are
decorated by elements $a$, $b$, and $c$ in $\pi_1X$ determined by
sheet-changing loops through $W_{(i,j)}$ and the components of $A$
using the chosen whiskers, with the loop orientations
corresponding to the edge orientations. Fixing (arbitrary)
orientations on the Whitney disks in $\cW$ associates a sign
$\epsilon_p=\pm 1$ to each interior intersection point $p$, and
determines a cyclic orientation at the trivalent vertex of the
corresponding tree $t_p$ via the following convention: As
illustrated in Figure~\ref{Y-wdisk-labelled-with-tree}, the tree
$t_p$ sits as an embedded subset of $W_{(i,j)}$. The two edges of
$t_p$ labelled $i$ and $j$ determine a ``corner'' of $W_{(i,j)}$
which does not contain the $k$-labelled edge, and the orientation
of $t_p$ is taken to be that induced by $W_{(i,j)}$ if and only if
this corner contains the \emph{positive} intersection point
between $A_i$ and $A_j$ that is paired by $W_{(i,j)}$.
\begin{figure}
\centerline{\includegraphics[width=110mm]{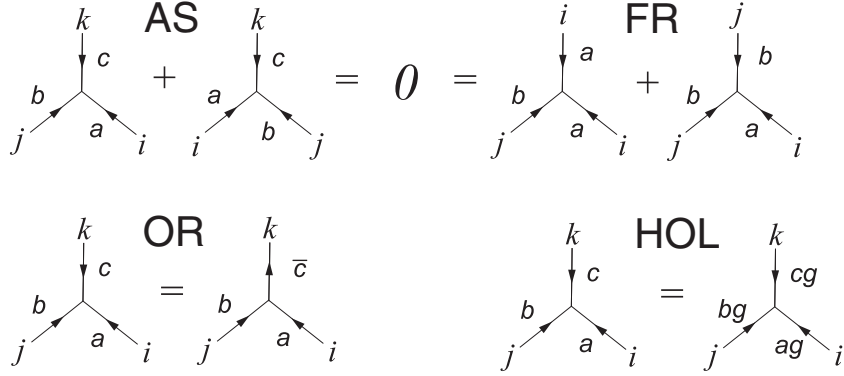}}
         \caption{The AS \emph{antisymmetry}, FR \emph{framing}, OR \emph{orientation}, and HOL \emph{holonomy}
         relations in $\cT_1(\pi_1X)$, assuming a fixed planar orientation on the trivalent vertex
         of all Y-trees. Here $a$, $b$, $c$, and $g$ are elements of $\pi_1X$, with
         $\overline{c}=c^{-1}$.
         These edge labels can also be extended linearly to elements of $\Z[\pi_1X]$.}
         \label{Y-relations-fig}
\end{figure}

(Due to differing orientation conventions there is a global sign
difference between $\tau$ of \cite{ST1} and the $\tau_n$ of more
recent papers (\cite{CST,ST2}), including $\tau_1$ here.)

The \emph{order 1 intersection tree} $\tau_1(\cW)$ is defined by
summing the $t_p$ over all unpaired intersection points in $\cW$:
$$
\tau_1(\cW):=\sum \epsilon_p\cdot
t_p\in\frac{\cT_1(\pi_1X)}{INT(A)}
$$

The abelian group $\cT_1(\pi_1X)$ is additively generated by the
above described decorated $Y$-trees, modulo the relations shown in
Figure~\ref{Y-relations-fig}. The INT($A$) relations are shown in
Figure~\ref{INTrelationTubed2sphere-fig}.
\begin{figure}
\centerline{\includegraphics[width=125mm]{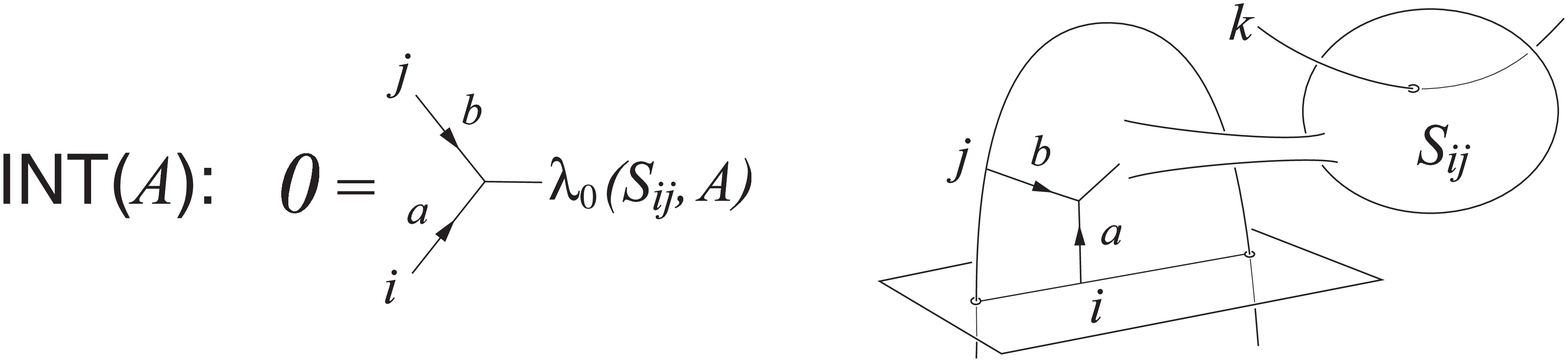}}
         \caption{ The INT(A) \emph{intersection} relations.
         Here $S_{ij}$ varies over 2-spheres representing a generating set for $\pi_2X$,
         and the order zero invariant $\lambda_0(S_{ij},A)$ counts only intersections between $S_{ij}$ and $A$,
         with the vertex label $k$ assigned to the intersections between $S_{ij}$ and $A_k$
         (and edge labels from $\pi_1X$).}
         \label{INTrelationTubed2sphere-fig}
\end{figure}

\subsubsection{Notation and normal forms for $\cT_1$}\label{subsubsec:T1notation}
Note that the AS relation implies that the generators appearing in
the FR relation are 2-torsion. By the OR edge-orientation
relation, we can normalize all edges on generators of
$\cT_1(\pi_1X)$ to be oriented towards the trivalent vertex. We
will use the cyclically oriented triple $(a,b,c)_{ijk}$, to denote
such a Y-tree having univalent labels $i$, $j$, $k$, and
corresponding group elements $a$, $b$, and $c$.

For fixed triples of univalent vertex labels, it will be
convenient to represent the corresponding subgroups of
$\cT_1(\pi_1X)$ as quotients of the integral group ring
$\Z[\pi_1X\times \pi_1X]\cong Z[\pi_1X]\times Z[\pi_1X]$ by using
the HOL relation to normalize one edge decoration to the trivial
group element $1\in\pi_1X$ -- for instance
$(a,b,c)_{ijk}=(1,ba^{-1},ca^{-1})_{ijk}$ -- and summing group
element decorations inside the parentheses. This corresponds to
the notation used in \cite{ST1} for $\tau =\tau_1$.

Assume that $\prec$ is a \emph{right-order} on $\pi_1X$ -- a
strict total ordering of the elements of $\pi_1X$ such that if
$g\prec h$ then $gf\prec hf$ -- and that the components of $A$
(the univalent vertex labels) are ordered. Then we have the
following normal form for $\cT_1(\pi_1X)$:

First consider a Y-tree whose univalent vertices all have the same
label, and whose edge decorations are distinct group elements. Use
the HOL relation to trivialize one edge decoration, and denote the
resulting Y-tree by the ordered pair $(g,h):=(1,g,h)$, where we
suppress the common univalent vertex label.

Such a generator $(g,h)$ is not involved in any FR relation (since
$1\neq g\neq h\neq 1$), and the orbit of $(g,h)$ under the AS and
HOL relations in this notation is:
$$
(g,h)=-(g^{-1},hg^{-1})=(hg^{-1},g^{-1})=-(gh^{-1},h^{-1})=(h^{-1},gh^{-1})=-(h,g).
$$
Now one can check that for each of the possible relative orderings
of $1$, $g$, and $h$ given by $\prec$, there is exactly one
generator $(a,b)$ in the orbit of $(g,h)$ such that $1\prec a\prec
b$; for instance if $h\prec 1\prec g$, then $(h^{-1},gh^{-1})$
satisfies $1\prec h^{-1}\prec gh^{-1}$.

In the case where $g=h$, but still assuming all univalent labels
are the same, then the generator is 2-torsion and we have the
following orbit under the AS, HOL, and FR relations:
$$
(g,g)=(g^{-1},g^{-1})=(g^{-1},1)=(1,g^{-1})=(g,1)=(1,g).
$$
From each such orbit we get a unique generator $(1,a)$ such that
$1\preceq a$ (or a unique generator $(a,a)$ with $1\preceq a$).

This same approach works for other combinations of univalent
labels, of course with different possible choices of convention.
We state here the rest of a set of generators in normal form,
written in ordered triple notation, with non-decreasing univalent
labels (from left to right) which correspond to the cyclic
orientation of the Y-tree:
$$
(1,g,h)_{ijk}\quad\quad\mbox{for distinct labels}\quad i < j < k,
$$
$$
(1,g,h)_{iij} \quad \mbox{with}\quad 1 \prec g \quad\quad
\mbox{for labels} \quad i < j,
$$
$$
(1,g,h)_{ijj} \quad \mbox{with} \quad g \prec h \quad\quad
\mbox{for labels} \quad i < j,
$$
and
$$
(1,1,g)_{iij} \quad \quad \mbox{for labels} \quad i < j.
$$
These last generators are 2-torsion.

This construction of a normal form, together with the observation
that a (non-trivial) right-orderable group is infinite (it must be
torsion free), gives the following proposition:
\begin{prop}\label{prop:orderable-T0-normal-form}
If $\pi_1X$ is non-trivial and right-orderable, then
$\cT_1(\pi_1X)$ is isomorphic to $\Z^\infty \oplus \Z_2^\infty$.
\end{prop}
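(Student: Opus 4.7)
The plan is to parlay the normal-form discussion already carried out just before the statement into a basis result, and then count. First I would observe that the AS, OR, HOL, and FR relations reduce every element of $\cT_1(\pi_1X)$ to a $\Z$-linear combination of the canonical representatives listed in the text---namely $(1,g,h)_{ijk}$ for distinct $i<j<k$; $(1,g,h)_{iij}$ with $1\prec g$; $(1,g,h)_{ijj}$ with $g\prec h$; the two all-same-label families $(1,g,h)$ with $1\prec g\prec h$ and $(1,g,g)$ with $1\prec g$; and the 2-torsion representative $(1,1,g)_{iij}$---the right-order $\prec$ being precisely what selects a single representative from each AS+OR+HOL orbit. This exhibits the spanning set.

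The main obstacle is linear independence. I would construct, for each normal-form generator $y$, an evaluation homomorphism $\phi_y\colon\cT_1(\pi_1X)\to\Z$, or $\phi_y\colon\cT_1(\pi_1X)\to\Z_2$ when $y$ lies in a 2-torsion orbit, sending $y\mapsto 1$ and every other normal-form generator to $0$. I would define $\phi_y$ first on the free abelian group of decorated $Y$-trees by (a) normalizing one edge via HOL, (b) moving into the canonical orbit under AS and OR while tracking signs, and (c) reading off the signed coefficient of $y$; then I would verify by direct inspection that $\phi_y$ respects all of AS, OR, HOL, and FR. The right-order assumption is exactly what renders step (b) unambiguous, and the FR compatibility is either vacuous (for the free-$\Z$ orbits, whose representatives have no $1$-decorated edge meeting equal vertex labels) or consistent after reduction modulo $2$.

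Finally, since $\pi_1X$ is non-trivial and right-orderable, it is torsion-free and hence infinite: any non-trivial $g$ with $g^n=1$ would satisfy $1\prec g$ after possibly replacing $g$ by $g^{-1}$, and right-invariance would then yield the contradiction $1\prec g\prec g^2\prec\cdots\prec g^n=1$. Infiniteness of $\pi_1X$ makes each of the normal-form families countably infinite, so the free-$\Z$ families assemble to a summand $\Z^\infty$ and the 2-torsion families assemble to a summand $\Z_2^\infty$, proving the proposition.
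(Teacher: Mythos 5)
Your proposal follows the paper's own route: the paper's proof of this proposition is precisely the preceding normal-form discussion (reduction via AS, OR, HOL, FR to the listed orbit representatives, with the right-order selecting a unique representative per orbit) combined with the observation that a non-trivial right-orderable group is torsion-free and hence infinite. The only addition is that you make the linear-independence step explicit via evaluation homomorphisms onto $\Z$ or $\Z_2$, which the paper leaves implicit in the uniqueness-of-representatives claim; this is a correct and standard way to fill in that detail, not a different method.
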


\subsection{Homotopy invariance of
$\tau_1$}\label{subsec:htpy-invariance-of-tau-proof} We summarize
here the proof in \cite{ST1} that $\tau_1(A):=\tau_1(\cW)$ is a
well-defined (regular) homotopy invariant of $A$, indicating the
relevant points that will have to be addressed when we allow $A$
to have annular components. The proof proceeds by first checking
that the value of $\tau_1(\cW)$ in
${\cT_1(\pi_1X)}/{\mbox{INT}(A)}$ does not depend on the choices
in constructing the Whitney tower $\cW$. Showing homotopy
invariance is then reduced to checking invariance under finger
moves.

With an eye towards our interest in the case $X=M\times I$, we
will make the simplifying assumptions that $\pi_1X$ is
torsion-free, and that $X$ is spin, i.e., the second
Stiefel-Whitney class $\omega_2 X$ is trivial.

\textbf{Independence of tree decorations and signs:} First of all,
the group elements decorating the tree edges do not depend on the
choices of sheet-changing loops since the $A_i$ and the Whitney
disks are simply connected. The OR relations account for the
orientation choices on the loops, which we will always assume to
be oriented \emph{into} the Whitney disks, so that all tree edges
are oriented towards the trivalent vertex. The decorations do not
depend on the choices of whiskers on the Whitney disks by the HOL
relations.

The sign $\epsilon_p$ associated to each cyclically oriented $t_p$
is independent of the chosen Whitney disk orientations (for fixed
orientations on the $A_i$) by the AS antisymmetry relations and
the ``positive corner'' convention
(subsection~\ref{subsec:tau1-def}).

\textbf{Independence of Whitney disk interiors:} For fixed
boundaries on the Whitney disks, the INT and FR relations guarantee
that changing the Whitney disk interiors does not change
$\tau_1(A)$; the idea is that such a change is given by
intersections with 2-spheres: Consider any Whitney disk
$W=W_{(i,j)}$ in $\cW$ pairing $p$ and $q$ in $A_i\cap A_j$. If $W'$
is another Whitney disk with same boundary as $W$, then the union of
$W$ and $W'$ is a topological 2-sphere $S$, which may not be smooth
along $\partial W=\partial W'$. We may arrange (by a small isotopy
and after perhaps performing some boundary twists on $W$) that along
their common boundary the collars $C$ and $C'$ of $W$ and $W'$ point
in opposite directions (Figure~\ref{collar-equator-pushoff-fig}) so
that $W\cup W'$ is an immersed 2-sphere, still denoted $S$, which is
oriented by the orientation of $W$ together with the opposite
orientation of $W'$.
\begin{figure}
\centerline{\includegraphics[width=100mm]{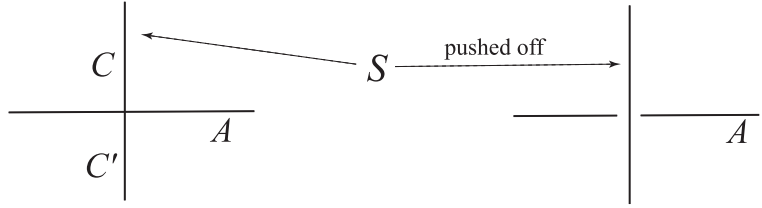}}
         \caption{The 2-sphere $S$ formed as the union of two Whitney disks
         can be pushed off of $A$ near its equator -- where the collars of the
         Whitney disks are joined along their boundaries.}
         \label{collar-equator-pushoff-fig}
\end{figure}
The boundary twists on $W$ ($n$ twists into $A_i$ and $m$ twists
into $A_j$) have changed $\tau_1(\cW)$ by
$n(a,b,a)_{iji}+m(a,b,b)_{ijj}\in\cT_1(\pi_1X)$ -- which is
2-torsion by the AS relations -- and have changed the framing of $W$
by $n+m\in\Z$. But by extending the Whitney section over the equator
to a normal push-off of all of $S$ we see that $n+m$ is equal modulo
$2$ to $\omega_2 (S)\in\Z_2$ which vanishes since $X$ is spin (note
that any self-intersections of $S$ do not contribute to $\omega_2
(S)$). So by the AS and FR relations
$n(a,b,a)_{iji}+m(a,b,b)_{ijj}=0\in\cT_1(\pi_1X)$.

Since the circle of intersection between $S$ and $A$ can be
perturbed away without creating any new intersections
(Figure~\ref{collar-equator-pushoff-fig}), it follows that the
change in $\tau_1(\cW)$ resulting from replacing $W$ by $W'$ is
exactly described by an INT($A$) relation with $S=S_{ij}$.

\textbf{Independence of Whitney disk boundaries:} To show that
$\tau_1(\cW)$ does not depend on the choices of boundaries of the
Whitney disks, for fixed pairings of the cancelling singularities
in $A$, it is convenient to weaken the definition of an order 1
Whitney tower by allowing transverse intersections among the
boundaries of the Whitney disks (following \cite{ST1} and 10.8 of
\cite{FQ}). The definition of $\tau_1$ is extended to such Whitney
towers by assigning trees to the boundary intersections between
Whitney disks in the following way (see
Figure~\ref{W-disk-boundary-int-fig}).
\begin{figure}
\centerline{\includegraphics[width=100mm]{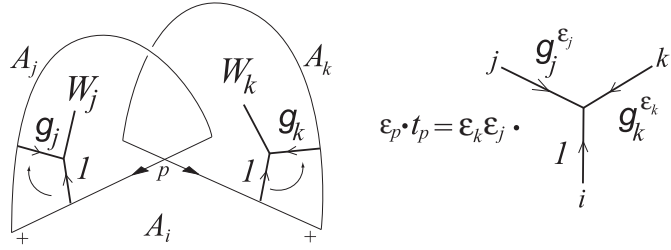}}
         \caption{The signed tree $\epsilon_p\cdot t_p$ assigned to an intersection point
         $p\in\partial_{\epsilon_j}W_{j}\cap\partial_{\epsilon_k}W_{k}$
         between Whitney disk boundaries in the case $\epsilon_k=+=\epsilon_j$,
         with the orientations of $A_i$ and $t_p$ both the same as the plane of the paper.}
         \label{W-disk-boundary-int-fig}
\end{figure}

Consider two Whitney disks, $W_{j}$ and $W_{k}$, pairing
intersections that $A_i$ has with $A_j$ and $A_k$, respectively.
Note that even if $W_{j}$ and $W_{k}$ have no interior intersections
with any $A_l$, they still have naturally associated Y-trees, with
only the ``descending'' edges and vertices decorated and labelled,
as in the left hand side of Figure~\ref{W-disk-boundary-int-fig}.
Choose whiskers so that the edges labelled by $i$ are decorated by
the trivial element $1\in\pi_1X$, which then determines elements
$g_j$ and $g_k$, respectively, decorating the edges labeled $j$ and
$k$. The cyclic orientation of a Y-tree sitting in a Whitney disk
$W$ corresponds to the orientation of $W$ by the ``positive corner''
convention described above (\ref{subsec:tau1-def}), and induces an
orientation of $\partial W$ (which corresponds to the convention
that $\overrightarrow{\partial W}$ together with a second inward
pointing vector give the orientation of $W$). We will use the
notation $\partial_+$ to indicate a Whitney disk boundary arc that
is oriented towards its positive intersection point, and
$\partial_-$ for a Whitney disk boundary arc that is oriented
towards its negative intersection point.

Now let $p\in
\partial_{\epsilon_j}W_{j}\cap\partial_{\epsilon_k}W_{k}$ be a point
such that the ordered pair of tangent vectors
$(\overrightarrow{\partial_{\epsilon_j}
W_{j}},\overrightarrow{\partial_{\epsilon_k} W_{k}})_p$ is equal
to the orientation of $A_i$ at $p$. Define the tree $t_p$
associated to such a $p$ by:
$$
\epsilon_p\cdot t_p:=\epsilon_k \epsilon_j
(1,g_k^{\epsilon_k},g_j^{\epsilon_j})_{ikj}
$$
where $\epsilon\in\{+,-\}\widehat{=}\{+1,-1\}$.

One can check that this definition of $t_p$ does not depend on the
choices made. The extended version of $\tau_1(A)$ is defined by
including such $t_p$ in the sum. Since all boundary intersections
can be eliminated by finger moves which create interior
intersections having the exact same trees (Figure~3 in
\cite{ST1}), this extended definition can always be reduced to the
original one.

Note that, properly interpreted, the formula assigning $t_p$ to
$p\in\partial W_j\cap\partial W_k$ also works when $W_j=W_k$,
including the case $i=j=k$. For instance, for $W$ pairing
self-intersections of $A_i$, and $p\in\partial_-W\cap\partial_+W$
such that the orientation of $A_i$ is equal to
$(\overrightarrow{\partial_{-} W},\overrightarrow{\partial_{+}
W})_p$, then $\epsilon_p\cdot t_p=-(1,g,g^{-1})_{iii}$, where the
Y-tree associated to $W$ has decorations $1$ and $g$ on the edges
dual to the $+$ and $-$ boundary arcs respectively. (This formula
will be used in Section~\ref{sec:examples} below.)

The proof of independence of Whitney disk boundaries now goes as
follows. Since the components of $A$ are all simply connected, any
configuration of Whitney disk boundaries can be achieved by a
regular homotopy of (collars of) the Whitney disk boundaries,
fixing the intersection points of $A$ (Clarification: we mean here
that this regular homotopy is induced by a regular homotopy of the
inverse images of the Whitney disk boundaries in the domain of
$A$, and extends to a regular homotopy of collars of the Whitney
disks in $X$). During such a homotopy, $\tau_1$ does not change
since boundary intersections come and go in cancelling pairs, or
accompanied by a cancelling interior intersection (when pushing
over an intersection point of $A$, see Figure~5 in \cite{ST1}).

Note that this step uses the fact that (the domains of) the
components of $A$ are simply connected, and will have to be
modified when we allow $A$ to have immersed annular components.

\textbf{Independence of intersection pairings:} The independence
of $\tau_1(\cW)$ on the choices of pairings of the intersections
of $A$ follows easily from a construction pictured in Figure~6 of
\cite{ST1} (originally from 10.8 of \cite{FQ}). In the presence of
2-torsion in $\pi_1X$ there is also a subtle indeterminacy
corresponding to the pairing of the inverse images of the
self-intersections of the $A_i$ which was first explained in
\cite{St}, and is covered by a more general INT relation
(\cite{ST1}) (which also covers the case where $X$ is not spin).

\textbf{Homotopy invariance:} Having established the independence
of $\tau_1(A):=\tau_1(\cW)$ on the choice of Whitney tower $\cW$,
homotopy invariance can be seen as follows. Up to isotopy, a
generic homotopy (rel $\partial$) between surfaces in a 4-manifold
is a sequence of finger moves (which create a pair of
intersections), Whitney moves (which eliminate a pair of
intersections), and local cusp homotopies (births and deaths of
local self-intersections). In a \emph{regular} homotopy it may be
arranged that there are \emph{only} finger moves and Whitney
moves, and that the finger moves all occur before the Whitney
moves. Since finger moves and Whitney moves are inverse to each
other, it follows that if $A$ is regularly homotopic to $A'$, then
there is $A''$ which differs (up to isotopy) from each of $A$ and
$A'$ by only finger moves which create cancelling intersections
paired by local clean Whitney disks. Since these finger moves and
their local clean Whitney disks can be assumed to be disjoint from
all other Whitney disks in any Whitney tower on $A$ or $A'$, it
follows that $\tau_1(A)=\tau_1(A'')=\tau_1(A')$.

\textbf{Geometric characterization:} That the vanishing of
$\tau_1(A)$ leads to an order 2 and height 1 Whitney tower follows
from the fact that all the relations in $\cT_1(\pi_1X)$ can be
realized by controlled manipulations of Whitney towers
(\cite{ST1}). For instance, the FR and INT relations can be
realized by creating clean Whitney disks, then twisting and tubing
into 2-spheres. The new relations introduced later for the
generalization of $\tau_1(A)$ which allows $A$ to have annular
components will be similarly realizable.

\subsection{Proof of Corollary~\ref{cor:tau=stable
embedding}}\label{subsec:proof-of-cor-tau=stableembedding}
\begin{proof}
The equivalence of statements (i) and (iii) in
Theorem~\ref{thm:tau} is proved as Theorem~2 of \cite{ST1} in the
case where $A$ consists of a single connected component, but the
exact same proof goes through for multiple components. Thus, the
vanishing of $\tau_1(A)$ means that all the singularities of $A$
can be paired by (framed) Whitney disks whose interiors are
disjoint from $A$. If the Whitney disk interiors are also
disjointly embedded, then $A$ is homotopic to an embedding without
any stabilization. Otherwise, it may be arranged by splitting the
Whitney disks using finger moves that each Whitney disk is
embedded, and has at most a single transverse interior
intersection with some other Whitney disk. Each such intersection
point $p\in W\cap W'$ can be eliminated after taking a connected
sum of $X$ with a copy of $S^2\times S^2$ by the Norman trick
(\cite{No}), which is illustrated schematically in
Figure~\ref{Norman-trick-fig}: If $S$ and $S'$ are dual 2-sphere
factors generating $\pi_2(S^2\times S^2)$, then $W$ can be tubed
into $S$, and $p$ can be eliminated by tubing $W'$ into $S'$ along
$W$. This operation can be done without creating any new
singularities, and since $S$ and $S'$ are both 0-framed, the
resulting Whitney disks are still framed. After one stabilization
for each interior intersection between Whitney disks, the
resulting disjointly embedded framed Whitney disks guide a
homotopy of $A$ to an embedding in the connected sum of $X$ with
(finitely many) copies of $S^2\times S^2$.
\begin{figure}
\centerline{\includegraphics[width=100mm]{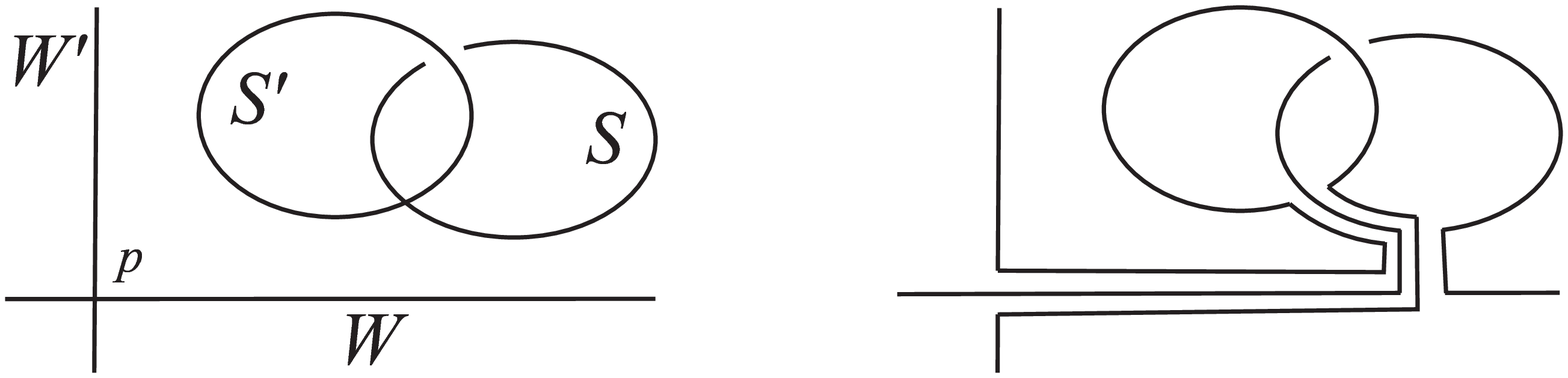}}
         \caption{The Norman trick.}
         \label{Norman-trick-fig}
\end{figure}

On the other hand, if $A\looparrowright X$ is homotopic to an
embedding in the connected sum of $X$ with copies of $S^2\times
S^2$, then since $\tau_1(A)$ only depends on the homotopy class
(rel boundary) of $A$, and since taking connected sums of $X$ with
$S^2\times S^2$ does not change the INT($A$) relations, it follows
that $\tau_1(A)$ vanishes.
\end{proof}


\subsection{Proof of Theorem~\ref{thm:tau(L)}}
\label{sub-sec:proof-of-thm-tau(L)} This subsection gives a proof
of the concordance invariance of the order 1 intersection tree
$\tau_1(L)$ of a link (Theorem~\ref{thm:tau(L)}), which at the
same time shows the concordance invariance of the order zero
intersection tree $\tau_0(L)$.

\begin{proof}
Consider an oriented $m$-component link $L$ of null-homotopic
knots in an oriented 3-manifold $M$, with $D$ any collection of
properly immersed disks in $M\times I$ bounded by $L\subset
M\times\{0\}$. For $n$ equal to $0$ or $1$, if $D$ admits an order
$n$ Whitney tower, then we want to show that
$\tau_n(L):=\tau_n(D)$ only depends on the concordance class of
$L$.

The key lemma is:
\begin{lem}[\cite{S}]\label{embedded-sphere-lemma}
Any $m$ elements of $\pi_2(M\times I)$ are represented by $m$
embedded pairwise disjoint 2--spheres.
\end{lem}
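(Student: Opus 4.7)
The plan is to reduce the claim to representing single elements of $\pi_2(M\times I)$ by embedded spheres, using a fixed generating set of embedded spheres in $M$, and then to spread things out in the interval direction to keep spheres associated to different $\alpha_i$'s in separate slabs.

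The first step is the identification $\pi_2(M\times I)\cong\pi_2(M)$ together with the following consequence of the sphere theorem for orientable 3-manifolds: $\pi_2(M)$ is generated as a $\Z[\pi_1M]$-module by the classes of a finite collection $S^{(1)},\ldots,S^{(k)}$ of pairwise disjoint embedded 2-spheres in $M$ (coming from the prime-decomposition spheres together with the $S^2\times\{\mathrm{pt}\}$ fibers of any $S^2\times S^1$ summands of $M$). Hence each given $\alpha_i$ admits a finite $\Z[\pi_1M]$-expansion $\alpha_i=\sum_l\epsilon_{il}\,g_{il}\cdot[S^{(j_{il})}]$.

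Next I would allocate pairwise disjoint closed subintervals $I_1,\ldots,I_m\subset I$ and construct each $\Sigma_i$ inside its own slab $M\times I_i$. Inside such a slab, parallel copies of $S^{(j_{il})}$ at various levels $M\times\{t\}$ are available as embedded representatives of $[S^{(j_{il})}]$. To assemble the combination $\alpha_i$, I would perform an ambient band-sum construction: connect the parallel copies along embedded arcs whose projections to $M$ are loops representing the required group elements $g_{il}$. Tubes and spheres can be put into general position in the 4-manifold $M\times I_i$, and because the tube cores have codimension $3$ while the spheres have codimension $2$ in $M\times I$, all unwanted intersections (between two tubes, between a tube and a sphere it is not tubed to, or between spheres at different levels) are isolated points which can be resolved by small vertical perturbations without changing any homotopy classes. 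Collecting all summands, with signs handled by the choice of band orientations, via these band-sums produces a single embedded 2-sphere $\Sigma_i\subset M\times I_i$ with $[\Sigma_i]=\alpha_i$.

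Since $\Sigma_1,\ldots,\Sigma_m$ lie in pairwise disjoint slabs, they are automatically pairwise disjoint and embedded, giving the lemma. The main obstacle is the ambient band-sum step: verifying that the tubes realizing the $\pi_1M$-action on $\pi_2M$ can be chosen to give an embedded sphere in exactly the desired homotopy class. This is the step that genuinely uses the 4-dimensionality of $M\times I$, since in $M$ alone the analogous tubes between copies of the generating spheres would generically meet each other and the spheres in circles that cannot be removed.
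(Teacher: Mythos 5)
Your proof is correct and follows essentially the same route as the paper: both invoke the Sphere Theorem to get disjoint embedded spheres generating $\pi_2M$ as a $\pi_1M$-module, tube them together in $M\times I$ (where the tubes can be made disjoint from everything by genericity of their cores), and use the interval direction to separate the resulting spheres. The ``main obstacle'' you flag at the end is already handled by your codimension count, so there is no gap.
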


\begin{proof}
A well known consequence of the Sphere Theorem of 3-manifold
theory is that $\pi_2M$ is generated as a module over $\pi_1M$ by
disjoint embeddings (the 2--spheres that decompose $M$ into prime
factors, together with any spherical boundary components and
cross-sections of any $S^1 \times S^2$ factors, see Proposition
3.12 of \cite{Ha}). Tubing these generators together in $M\times
I$ does not create any new intersections, so all elements of
$\pi_2(M\times I)$ are represented by embeddings, which can be
isotoped to be pairwise disjoint using the product structure.
\end{proof}

We also need the following general properties of $\tau_n(A)$ for
$A\looparrowright X$, which can be checked directly from the
definitions: If $-A$ denotes a flip of orientation on (all
components of) $A$, then $\tau_0(-A)=\tau_0(A)$ and
$\tau_1(-A)=-\tau_1(A)$, where the orientation of the ambient
4-manifold $X$ is fixed. On the other hand, flipping the
orientation of $X$ while fixing the orientation of $A$ has the
effect of multiplying $\tau_0(A)$ by $-1$, but preserves
$\tau_1(A)$.

To see that $\tau_n(L)$ does not depend on the choice of $D$, let
$D'$ be another singular null-concordance of $L$. Then the union
$S$ of $D$ and $D'$ along $L$, in two copies of $M\times I$
identified along $M$, determines $m$ elements of $\pi_2(M\times
I)\cong \pi_2M$. The definition of $\tau_n(L):=\tau_n(D)$ depends
on fixing a convention for the how the orientation of $L\subset
M\times \{0\}$ induces an orientation of $D\looparrowright M\times
I$. It follows that, after orienting $S$ in $M\times I$ (which
requires reorienting one of the original copies of $M\times I$ and
the corresponding singular null-concordance), we have
$$
0=\tau_n(S)=\pm(\tau_n(D)-\tau_n(D'))
$$
where the first equality comes from
Lemma~\ref{embedded-sphere-lemma} and the homotopy invariance of
$\tau_n$.

It also follows from Lemma~\ref{embedded-sphere-lemma} that the
INT($D$) relations are trivial in the case $n=1$.

The sense in which $\tau_n(L)$ is a concordance invariant requires
interpreting its value modulo the effect on $\cT_n(\pi_1M)$ of
whisker choices. This will be discussed in detail in
Section~\ref{sec:tau1(L,Lz)-proof}. For now we observe that if $A$
is a concordance from $L'$ to $L$ then, up to the
change-of-whisker action, $\tau_n(L')=\tau_n(A\cup_L
D)=\tau_n(D)=\tau_n(L)$ since $A$ has no singularities. This same
argument shows that $\tau_n(L)$ is invariant under order $n+1$
Whitney concordance since then all singularities of $A$ of order
less than $n+1$ occur in cancelling pairs.
\end{proof}


\section{$\tau_1$ for inessential
annuli}\label{sec:tau-for-inessential-annuli}
In this section we
extend the definition of $\tau_1(A)$ to allow $A$ to have
components which are \emph{inessential} properly immersed annuli,
meaning that the induced map $\Z \rightarrow \pi_1X$ on
fundamental groups is trivial.

This generalized $\tau_1$ is illustrated by a pairing between
links and 2-spheres in $M$ described in
\ref{subsec:link-sphere-pairing} below, and computed in
\ref{subsec:computing-link-sphere-pairing}. These results are used
in \ref{subsec:Phi-def} to define the indeterminacy subgroup
$\Phi(z)$ in the definition of the relative order 1 intersection
tree $\tau_1(L_z,L)$ of \ref{subsec:intro-stable-concordance}.
(The reader who is only interested in links in irreducible
3-manifolds can skip these three subsections.)

We continue to assume that $X$ is spin, and $\pi_1X$ is
torsion-free.

Let $A\looparrowright X$ be a collection of properly immersed
surfaces admitting an order 1 Whitney tower. We now allow the
components of $A$ to include inessential annuli as well as 2-disks
and 2-spheres. In order to extend the definition
(\ref{subsec:tau1-def}) of $\tau_1(A)$ to such $A$ it will only be
necessary to add some additional INT($A$) relations.

Starting with the original definition \ref{subsec:tau1-def}, we
proceed by examining the steps in the proof in
\ref{subsec:htpy-invariance-of-tau-proof} that $\tau_1(A)$ is a
well-defined homotopy invariant.

First of all, note that $\tau_0(A)$ is still a well-defined
homotopy invariant for such $A$ (with the vanishing of $\tau_0(A)$
equivalent to the existence of an order 1 Whitney tower on $A$)
since in inessential annuli the elements of $\pi_1X$ associated to
sheet-changing paths are still well-defined (because loops in $A$
which do not change sheets must represent $1\in\pi_1X$). For the
same reason the edge decorations from $\pi_1X$ on the trees in
$\tau_1(A)$ are still well-defined.

The key step in \ref{subsec:htpy-invariance-of-tau-proof} which
relies on the assumption that the components of $A$ are simply
connected is the argument showing independence of the choices of
Whitney disk boundaries, which uses the fact that any two
configurations of arcs with fixed endpoints in a simply connected
surface are related by a homotopy (rel endpoints). This is no longer
true when $A$ has annular components, and to account for the
indeterminacies which correspond to changing the homotopy classes
(rel endpoints) of the Whitney disk boundaries on annular components
of $A$ we will include new INT($A$) relations.

Consider any Whitney disk $W_{ij}$ pairing intersections between
an annular component $A_i$ of $A$ and any component $A_j$
(including possibly $j=i$). Since $A_i$ is inessential, push-offs
of either component of $\partial A_i$ bound immersed disks in $X$.
Let $D_{ij}$ be an immersed disk, with $\partial D_{ij}$ a
parallel push-off of a component of $\partial A_i$ sitting in a
collar on $\partial A_i$. The normal disk-bundle of $D_{ij}$ in
$X$ has a nowhere vanishing section given by pushing tangentially
along $A_i$, and we require that this section extends to a nowhere
vanishing section over $D_{ij}$; this can always be arranged by
boundary-twisting $D_{ij}$ if necessary. As illustrated in
Figure~\ref{boundaryINTdiskandRelation-fig}, by half-tubing a
collar of $W_{ij}$ into a collar of $D_{ij}$ along any embedded
arc in $A_i$ we can change the homotopy class of $\partial W_{ij}$
(rel endpoints) in $A_i$ by $\pm$ a generator of $\pi_1A_i$
(meaning that $\partial W_{ij}$ is band-summed into a loop which
pulls back to a generator of the fundamental group of the domain
of $A_i$).
\begin{figure}
\centerline{\includegraphics[width=125mm]{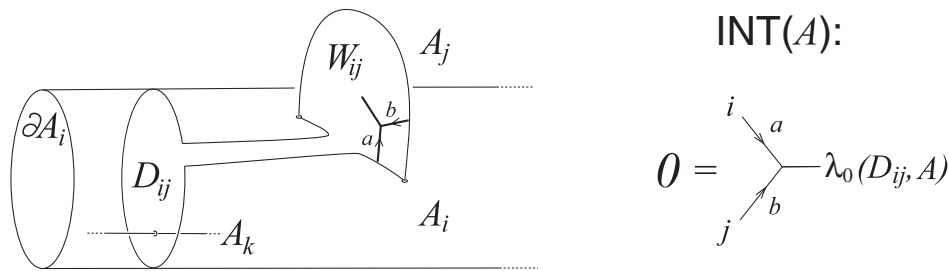}}
         \caption{Changing the homotopy class in $A_i$ of the boundary of a Whitney disk
         leads to new INT($A$) relations determined by the order zero
         intersection invariant $\lambda_0(D_{ij},A)$, with the corresponding univalent labels
         taken from the components of $A$.}
         \label{boundaryINTdiskandRelation-fig}
\end{figure}

The framing requirement on $D_{ij}$ means that this operation
preserves the framing on $W_{ij}$, and the resulting change in the
contribution of $W_{ij}$ to $\tau_1(A)$ can be expressed as the
relation illustrated in
Figure~\ref{boundaryINTdiskandRelation-fig}, where $a$ and $b$ are
the descending group elements for $W_{ij}$, and the order zero
invariant $\lambda_0(D_{ij},A)$ counts only intersections between
$D_{ij}$ and all the $A_k$ (not intersections among the $A_k$), as
was the case for the 2-spheres $S_{ij}$ in the original INT($A$)
relations of Figure~\ref{INTrelationTubed2sphere-fig}.

By the proof of the independence of $\tau_1$ on the Whitney disk
interiors (\ref{subsec:htpy-invariance-of-tau-proof}), using the
usual INT($A$) relations, this expression does not depend on the
choice of $D_{ij}$ (or on the choice of the component of $\partial
A_i$). Since the complement of any number of embedded arcs in an
annulus is path-connected, this operation can be iterated any
number of times on any number of Whitney disks, to get any choices
of homotopy classes (rel endpoints) for the Whitney disk
boundaries, so all the resulting indeterminacies are linear
combinations of the expressions in
Figure~\ref{boundaryINTdiskandRelation-fig}. By including these
expressions into the INT($A)$ relations, and otherwise defining
$\tau_1(A)$ as before, we get a well-defined homotopy invariant,
with $A$ allowed to have inessential properly immersed annular
components.

We remark that the original INT($A$) relations are determined by
computing order zero intersection invariants for a generating set
for $\pi_2X$ (as a module over $\pi_1X$). These extended INT($A$)
relations are determined by computing finitely many more order
zero intersection invariants, which in our applications to links
will essentially correspond to the order zero intersection tree of
the link.

These new INT($A$) relations can realized by performing
finger-moves on $A$ to create clean Whitney disks with prescribed
group elements, and then performing the above operation. Since the
usual geometric manipulations of Whitney towers -- such as
``transferring'' intersections between Whitney disks (Figure~10 in
\cite{ST1}) -- also work for annular components, it follows that
the geometric statements in Theorem~\ref{thm:tau} hold for this
generalized version of $\tau_1(A)$.

Summarizing, we have:
\begin{thm}\label{thm:tau-for-inessential-A}
Theorem~\ref{thm:tau} and Corollary~\ref{cor:tau=stable embedding}
also hold if any components of $A$ are inessential properly
immersed annuli.
\end{thm}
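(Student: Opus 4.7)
The plan is to run the same proof as for Theorem~\ref{thm:tau} in \cite{ST1}, checking at each step that the argument survives the presence of inessential annular components of $A$ thanks to the new INT relations of Figure~\ref{boundaryINTdiskandRelation-fig}. The implications (iii)~$\Rightarrow$~(ii)~$\Rightarrow$~(i) are formal and local: a height 1 tower is tautologically an order 1 tower with no order 1 intersections, so $\tau_1$ vanishes; if $A$ admits an order 2 tower, every order 1 intersection $p\in W_{(i,j)}\cap A_k$ is paired by a second-layer Whitney disk with an opposite-signed partner, and the two signed trees cancel by the AS relation, so again $\tau_1(A)=0$. Neither argument uses anything about the topology of the components of $A$.

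The content lies in (i)~$\Rightarrow$~(iii). My approach is to realize each generator of the kernel of $\tau_1$ geometrically by a manipulation of the Whitney tower that does not change the (rel boundary) homotopy class of $A$. The manipulations needed to realize the AS, HOL, OR, FR, and original INT relations are those of \cite{ST1}, all of which are local near Whitney disks and hence apply unchanged to annular components. The only new relations are the INT relations from Figure~\ref{boundaryINTdiskandRelation-fig}, and these are realized by the very construction that justified them: finger-move an $A_i$ against itself along a loop representing the prescribed group element to create a cancelling pair paired by a small clean Whitney disk $W$, then half-tube a collar of $W$ into a collar of the framing disk $D_{ij}$ along an embedded arc in $A_i$. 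After cancellation of all Y-trees, standard splitting and Whitney move arguments convert the order 1 tower first to an order 2 tower (pairing the remaining intersections) and then to a height 1 tower (pushing the Whitney disk interiors off $A$ using the second layer).

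The main obstacle is verifying that the boundary-homotopy freedom for Whitney disks meeting the annular components is fully captured by the new INT relations; this is precisely the path-connectedness observation noted in the definition, namely that the complement of finitely many embedded arcs in an annulus remains path-connected, so iterated half-tubing against various $D_{ij}$'s realizes every boundary homotopy class. Once this is in hand, Corollary~\ref{cor:tau=stable embedding} follows verbatim from subsection~\ref{subsec:proof-of-cor-tau=stableembedding}: the Norman trick removes each interior intersection between Whitney disks at the cost of one $S^2\times S^2$-stabilization, producing an honest embedding. Conversely, if $A$ is homotopic to an embedding after stabilization, then $\tau_1$ vanishes in the stabilized manifold, and since stabilization contributes only geometrically dual sphere pairs whose $\lambda_0$ with $A$ vanishes, it introduces no new INT relations; hence $\tau_1(A)=0$ already in $X$.
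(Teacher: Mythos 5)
Your proposal is correct and follows essentially the same route as the paper: the new boundary-homotopy indeterminacy on annular components is absorbed into the extra INT relations of Figure~\ref{boundaryINTdiskandRelation-fig}, their completeness rests on the path-connectedness of an annulus minus finitely many embedded arcs, their geometric realizability (finger move to a clean Whitney disk, then half-tube into the framed disk $D_{ij}$) gives (i)~$\Rightarrow$~(iii), and the Norman trick argument for the stable-embedding corollary carries over verbatim. The only point worth making explicit, which the paper records, is that the $\pi_1X$-decorations on the trees remain well-defined precisely because the annuli are inessential (non-sheet-changing loops in $A$ represent $1\in\pi_1X$).
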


\subsection{The order 1 intersection pairing of links and
spheres}\label{subsec:link-sphere-pairing} In this subsection we
use Theorem~\ref{thm:tau-for-inessential-A} to define an order 1
intersection pairing between links and 2-spheres which determines
the indeterminacy subgroup $\Phi(z)$ of
Theorem~\ref{thm:tau(Lz,L)}.

Let $L$ be any link of null-homotopic knots, and $S$ be any
immersed 2-sphere in $M$. Perturb $S$ into $M\times I$, and extend
$L$ to the collection of embedded annuli $A=L\times I\subset
M\times I$. By Lemma~\ref{embedded-sphere-lemma}, $S$ is homotopic
to an embedding, and since the components of $L$ are
null-homotopic it follows that $\tau_0(A,S)=0$. So we can define
the pairing
$$
\tau_1(L,S):=\tau_1(A,S)\in\cT_1(\pi_1M)/\mbox{INT($z$)}
$$
where we write INT($z$) for INT($A,S$), since (by
Lemma~\ref{embedded-sphere-lemma} and the fact that the components
of $L$ are null-homotopic) the only INT relations come from the
boundary $L$, and are determined by $z=\tau_0(L)$ as follows.
Using the ordering of the link components we can write $\tau_0(L)$
in a \emph{normal form}:
$$
\tau_0(L)=z=\sum_{i\leq j}z_{ij}=\sum_{i\leq j}\sum_p
e_p\cdot(g_p)_{ij},
$$ where the coefficients $e_p$ are non-zero integers, with the understanding that
the indices $p$ depend on $i,j$ -- this is just collecting
together all the intersections between $D_i$ and $D_j$ that have
the same group element $g_p$. The group elements are assumed to be
distinct, i.e. $(g_p)_{ij}\neq (g_{p'})_{ij}$ for $p\neq p'$; and
the only indeterminacy in this normal form for $z$ is in case
$i=j$ we choose only one of $(g_p)_{ii}$ or $(g_p^{-1})_{ii}$.

Then each INT relation corresponding to a 2-disk $D_{ij}$ as in
Figure~\ref{boundaryINTdiskandRelation-fig}, with the group
element $a$ normalized to $1$ by a HOL relation, can be written:
$$
0=(1,g,\sum_{k\neq
i}z_{ki})_{ijk}+(1,g,z_{ii}+\overline{z_{ii}})_{iji}
$$
where the first term is determined by order zero linking between
$L_i$ and the other components $L_k$ for $k\neq i$; and the second
term is determined by order zero linking between $L_i$ and a
0-parallel push-off $L'_i$. Note that the second term is
independent of the above mentioned indeterminacy in the normal
form of $z$ (it corresponds to the relation
$\lambda(D,D')=\mu(D)+\overline{\mu(D)}$ in Wall's intersection
form for $D'$ a zero-parallel push-off of $D$ -- see the next
example).

\subsubsection{Example}\label{S1crossS2pairingexample}
This subsubsection computes the pairing $\tau_1(L,S)$ in a case
where $L$ is a knot, illustrating its relation to the study of $L$
up to stable concordance by defining the indeterminacy subgroup
$\Phi(z)$ and target group of Theorem~\ref{thm:tau(Lz,L)} in a
simple example. It also serves as motivation and a warm-up for the
more complicated computations in the subsequent two subsections.

Consider the case $M=S^1\times S^2$, with $\pi_1M\cong\Z$
generated multiplicatively by $x$. Let $x^nS$ denote a
cross-sectional 2-sphere $S$ equipped with a whisker corresponding
to $x^n\in\pi_1M$, and let $L$ be the clasp knot $K\subset M$ as
in Figure~\ref{WH-double-AND-framed-in-S1crossS2-fig} with
$\tau_0(K)=x\in\cT_0(\langle x\rangle)$. (In this case $K$ is just
a positive Whitehead double of a circle factor representing the
generator $x$; by the ``belt trick'' there are two such Whitehead
doubles up to isotopy.)
\begin{figure}
\centerline{\includegraphics[width=135mm]{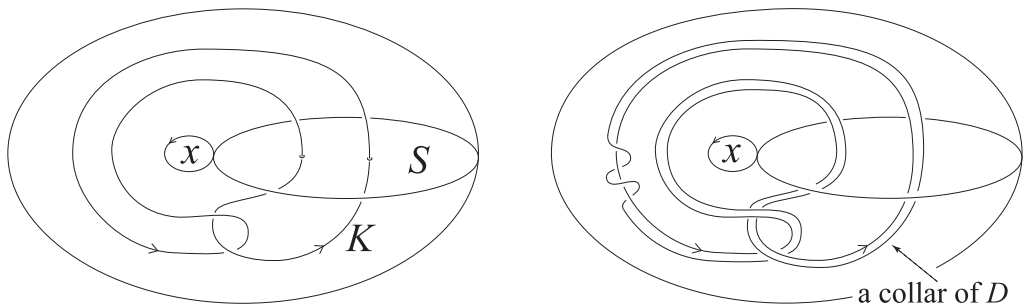}}
         \caption{On the left, the clasp knot $K$ and (one hemisphere of)
         the cross-sectional 2-sphere $S$ in
         $M=S^1\times S^2$ (with only $S^1\times D^2$ shown).
         On the right, a collar of a 0-framed immersed 2-disk $D$ bounded by
         a parallel copy of $K$ in $K\times I$.}
         \label{WH-double-AND-framed-in-S1crossS2-fig}
\end{figure}

The target for $\tau_1(K,x^nS)$ is the quotient of $\cT_1(\langle
x\rangle)$, with univalent labels from $\{K,S\}$, by the
intersection relations $\mbox{INT}(K\times I,S)=\mbox{INT}(x)$ which
can be computed explicitly from intersecting a (framed)
null-homotopy $D$ of a parallel copy of $K$ with $K\times I$
(crossing changes in the right hand side of
Figure~\ref{WH-double-AND-framed-in-S1crossS2-fig}):
$$
\mbox{INT}(x)=(1,x^r,x+x^{-1}-2)_{KjK}=(1,x^r,x+x^{-1})_{KjK}=0
$$
for $j\in\{K,S\}$ and all $r\in\Z$. (Note that the contributions
$(1,x^r,-2)=-2(1,x^r,1)$ from the two negative twists which preserve
the framing of $D$ are killed by the AS relations.)

As illustrated in
Figure~\ref{WH-double-in-S1crossS2withWdisk-fig}, $K\times I$ and
$x^nS$ support an order 1 Whitney tower $\cW$ with a single
Whitney disk, and
$$
\tau_1(\cW)=(1,x^{-1},x^n)_{KKS}=(x,1,x^{n+1})_{KKS}=-(1,x,x^{n+1})_{KKS}
$$
where the middle equality comes from the HOL relations, and the
right-most equality comes from the AS relations.
\begin{figure}
\centerline{\includegraphics[width=100mm]{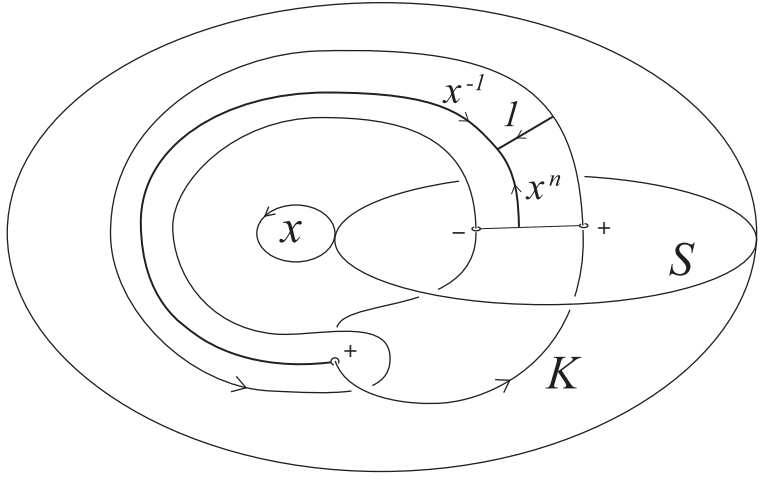}}
         \caption{The order 1 Whitney tower on $K\times I$ and
          $x^nS$. (Whiskers are not shown.)}
         \label{WH-double-in-S1crossS2withWdisk-fig}
\end{figure}

It follows that $\tau_1(K,x^nS)$ lies in the span of Y-trees
having two univalent vertices labeled by $K$ and one by $S$. Using
the HOL relation to trivialize one $K$-labeled edge decoration, we
can write any such Y-tree as
$$
(x^l,x^m):=(1,x^l,x^m)_{KKS}\quad\quad l,m\in\Z.
$$
By the AS and HOL relations we have
$$
(x^l,x^m)=-(x^{-l},x^{m-l}),
$$
so the relevant subgroup of $\cT_1(\langle x\rangle)$ is
isomorphic to $\Z^\infty\oplus \Z_2^\infty$, with the $\Z$ factors
generated by $(x^l,x^m)$, for $l\in\N$ and $m\in\Z$, and the $Z_2$
factors generated by $(1,x^m)$, for $m\in\Z$.

Now, in terms of these generators the relevant INT($x$) relations
$(x+x^{-1},x^r)=(x,x^r)+(x^{-1},x^r)=0$ give:
$$
(x,x^r)=(x,x^{r+1})
$$ for all $r\in\Z$ (using $(x^{-1},x^r)=-(x,x^{r+1})$ by HOL and AS).

So for all $n$ we have
$$
\tau_1(K,x^nS)=-(x,x^{n+1})=-(x,x)\neq 0\in\cT_1(\langle
x\rangle)/\mbox{INT}(x).
$$

The discussion so far has focused on computing $\tau_1(K,x^nS)$.
Recall that the relevance to the knot theory of $K$ from our point
of view is that $x^nS$ can be tubed into any singular concordance
$A$ of $K$, and the corresponding indeterminacy in $\tau_1(A)$ lives
in $\cT_1(\langle x\rangle)/\mbox{INT}(x)$ with \emph{all univalent
labels corresponding to $K$}. These indeterminacies are described
algebraically by just replacing the $S$-labels with $K$-labels in
$\tau_1(K,x^nS)$, which gives the indeterminacy subgroup
$\Phi(x)<\cT_1(\langle x\rangle)/\mbox{INT}(x)$, as $x^n$ varies
over $\pi_1M$ with the generator $S$ of $\pi_2M$ fixed.

Specifically, we can describe the quotient of $\cT_1(\langle
x\rangle)/\mbox{INT}(x)$ by $\Phi(x)$ as follows:

Using the natural ordering of $\langle x \rangle\cong\Z$ with the
chosen generator $x$, we can write the generators of
$\cT_1(\langle x\rangle)$ with all univalent vertices labelled by
$K$ in normal form (as in \ref{subsubsec:T1notation} above):
$$
\cT_1(\langle x\rangle)=\langle(x^m,x^n)\rangle_{1\leq
m<n}\oplus\langle(x^r,x^r)\rangle_{0\leq r}\cong
\Z^\infty\oplus\Z_2^\infty.
$$

Expressing the projected INT($x$) relations
$(x,x^r)+(x^{-1},x^r)=0$ for $r\in\Z$ in the normal form
generators gives the relations:
\begin{equation}\label{int-rel-terms}
(x,x^r)=(x,x^{r+1})\quad 1 \leq r\quad\mbox{and}\quad
(x^q,x^{q+1})=(x^{q+1},x^{q+2})\quad 0\leq q
\end{equation}
which intersect when $r=2$ and $q=1$, so the INT($x$) relations
identify an infinite family of normal form generators.

Now the projected image $\Phi(x)$ of each of the pairings
$\tau_1(K,x^nS)=-(x,x^{n+1})=-(x,x)$, for all $n$, is equal to (the
inverse of) one of the generators identified by the INT relations,
so the quotient of $\cT_1(\langle x\rangle)/\mbox{INT}(x)$ by
$\Phi(x)$ is gotten by setting all the terms in equation
\eqref{int-rel-terms} equal to zero.

For example, by Theorem~\ref{thm:tau(Lz,L)} any combination of
Borromean surgeries on $K$ which correspond to linear combinations
of terms in equation~\eqref{int-rel-terms} do not change the stable
concordance class of $K$. On other hand, there still exists
$\Z^\infty\oplus\Z_2^\infty$ many distinct stable concordance
classes of knots which are order 1 Whitney concordant to $K$, as
detected by the generators $(x^m,x^n)$ with $|m-n|\geq 2$ and $m\neq
1$.

\subsection{Computing
$\tau_1(L,S)$}\label{subsec:computing-link-sphere-pairing} The
computations of this subsection are used in the following
subsection to describe the indeterminacy subgroup $\Phi(z)$ of
Theorem~\ref{thm:tau(Lz,L)}.

It follows from Theorem~\ref{thm:tau-for-inessential-A} that
$\tau_1(L,S)$ is invariant under isotopy of $L$ and homotopy of $S$
in $M$. In (the proof of) Lemma~\ref{lem:link-sphere-pairing} below
we will see how $\tau_1(L,S)$ is invariant under order 1 Whitney
concordance of $L$, and is completely determined by $S$ and
$\tau_0(L)$.

In order to express $\tau_1(L,S)$ as a function of $S$ and
$\tau_0(L)$ it will be useful to define a map $\sigma_S:\pi_1M
\rightarrow\Z[\pi_1M]$ for each $S$ representing an element of
$\pi_2M$. This will be done by defining the map on each generator
of $\pi_2M$ as a $\pi_1M$-module, then extending linearly. We may
ignore any generators which are spherical boundary components of
$M$, since they clearly will have vanishing $\tau_1(L,S)$ for any
$L$. The remaining generators consist of 2--spheres that decompose
$M$ into prime factors, and cross-sections of any $S^1 \times S^2$
factors. The maps are defined slightly differently for each of
these two types of spheres. (The first-time reader could at this
point jump to Lemma~\ref{lem:link-sphere-pairing}, and then begin
to absorb the definition of $\sigma_S$ during the subsequent
proof.)

\subsubsection{Separating spheres}\label{subsubsec:separating-spheres-sigma}
Let $S$ be an embedded separating sphere giving a connected sum
decomposition of $M$ as $M=M'\sharp_S M''$. Any $g\in\pi_1M$ can
be written $g=h_0\prod_{l=1}^n g_l h_l$, where the $g_l$ are
carried by $M'$ and the $h_l$ are carried by $M''$. Denote by
$\beta_q:=h_q\prod_{l=q+1}^n g_l h_l$, with $\beta_n=h_n$, and
define $\sigma_S(g)$ by
$$
\sigma_S(g):=\sum_{q=1}^n (g_q-1)\beta_q \in \Z[\pi_1M].
$$
Thus, here $\sigma_S(g)$ is roughly an alternating sum of
``tails'' of $g$. For example, in the case $n=2$ we have:
$$
\sigma_S:g=h_0g_1h_1g_2h_2 \mapsto (g_1-1)(h_1g_2h_2)+(g_2-1)h_2.
$$

\subsubsection{Non-separating spheres}\label{subsubsec:non-separating-spheres-sigma}
The map $\sigma_S$ is
similar but slightly more complicated for non-separating spheres.
Let $S$ be a spherical section of the $S^1 \times S^2$ factor in a
connected sum decomposition of $M$ as $M=(S^1 \times S^2)\sharp
M'$. Any $g\in\pi_1M$ can be written $g=h_0\prod_{l=1}^n
x^{\epsilon_l r_l} h_l$, where $x$ is represented by the circle in
the $S^1 \times S^2$ factor, with $\epsilon_l=\pm 1$ and the $r_l$
positive integers, and the $h_l$ are carried by $M'$. Let
$\beta_q:=h_q\prod_{l=q+1}^n x^{\epsilon_l r_l} h_l$, with
$\beta_n=h_n$, and define $\sigma_S(g)$ by
$$
\sigma_S(g):=\sum_{q=1}^n \epsilon_q\,
(\sum_{l=1}^{r_q}x^{\epsilon_q(l+\frac{\epsilon_q-1}{2})})\beta_q
\in \Z[\pi_1M].
$$
For example, in the case $n=2$, $r_1=3=r_2$,
$\epsilon_1=1=-\epsilon_2$ we have:
$$
\sigma_S:g=h_0x^3h_1x^{-3}h_2 \mapsto
(x^3+x^2+x)h_1x^{-3}h_2-(1+x^{-1}+x^{-2})h_2.
$$
So for separating spheres $\sigma_S(g)$ is again a sum of tails of
$g$, but multiplied by the indicated unit-coefficient polynomials
in $x$ or $x^{-1}$ .

\subsubsection{}

Having defined $\sigma_S$ on generators, the definition for
arbitrary elements of $\pi_2M$ is gotten by extending linearly:
$\sigma_{(aS\pm a'S')}(g):=a\sigma_S(g)\pm a'\sigma_{S'}(g)$.

\begin{lem}\label{lem:link-sphere-pairing}
Let $S$ be any immersed 2-sphere in $M$. For any
$z\in\cT_0(\pi_1M)$, and any link $L \subset M$ of null-homotopic
knots with $\tau_0(L)=z$,
$$
\tau_1(L,S)=\sum_{i\leq j}\sum_p
e_p\cdot(g_p,1,\sigma_S(g_p))_{ijS}\in\cT_1(\pi_1M)/\mbox{INT}(z)
$$
where $z=\sum_{i\leq j}\sum_p e_p\cdot(g_p)_{ij}$.
\end{lem}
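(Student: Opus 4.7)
The plan is to realize $L$ as a clasp link that directly encodes the normal form of $z = \tau_0(L)$, build an explicit order $1$ Whitney tower on $A \cup S$ where $A = L \times I$, and compute $\tau_1(A,S)$ by reading off the decorated trees. First I would reduce to the case $L = L_z$ is a clasp link as in Figure~\ref{clasplink-guidingarcs-fig}: for each normal-form summand $e_p(g_p)_{ij}$ of $z$, attach $|e_p|$ signed clasps between $L_i$ and $L_j$ guided by arcs $\alpha_p$ representing $g_p$. Any two links realizing the same $\tau_0$ cobound annuli supporting an order $1$ Whitney tower (by the classification referenced in \ref{subsec:order1wconc}); by the argument used in the proof of Theorem~\ref{thm:tau(L)} --- gluing such a concordance to $L \times I$ and $L_z \times I$ and using Lemma~\ref{embedded-sphere-lemma} to kill sphere indeterminacies --- one gets $\tau_1(L,S) = \tau_1(L_z,S)$ in $\cT_1(\pi_1 M)/\mbox{INT}(z)$.

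Next I would construct the Whitney tower and compute. Isotope the arcs $\alpha_p$ to be transverse to $S$ in $M$ and push an embedded representative of $S$ off into $M \times I$. Each transverse intersection of some $\alpha_p$ with $S$ produces a cancelling pair of $A$-with-$S$ intersections, since the clasp move transports a strand of $L_z$ once around $S$. Each such pair admits a Whitney disk running as a thin strip along the collar of $\alpha_p$ between the two $S$-punctures; a single boundary twist makes it framed, with the framing-induced extra intersections contributing only $2$-torsion killed by the AS and FR relations. The heart of the argument is then the identification of the trees: the Whitney disk along $\alpha_p$ meets the opposite strand of $L_z$ (belonging to $A_j$ in the inter-component case, or the second strand of $A_i$ in the self-linking case) exactly once per $S$-intersection of $\alpha_p$, at a point whose sheet-changing loop represents the portion of $g_p$ running from the $S$-puncture to the $j$-endpoint of $\alpha_p$. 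Summing over intersections of $\alpha_p$ with $S$ produces a $Y$-tree with univalent labels $i,j,S$, edges decorated by $g_p$, $1$, and a $\Z[\pi_1 M]$-linear combination of tails of $g_p$. Unpacking the definitions in \ref{subsubsec:separating-spheres-sigma}--\ref{subsubsec:non-separating-spheres-sigma} identifies this combination with $\sigma_S(g_p)$: the separating case produces the alternating sum of tails $\sum(g_q-1)\beta_q$, and the non-separating case produces the unit-polynomial-weighted tails. Multiplying by $e_p$, summing over $p$, and then extending linearly over the chosen $\pi_2 M$-generators yields the claimed formula.

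Finally I would verify well-definedness in $\cT_1(\pi_1 M)/\mbox{INT}(z)$. Different choices of arc $\alpha_p$ within the homotopy class of $g_p$, and different interiors for the Whitney disks, differ by elements of $\pi_2(M \times I)$; by Lemma~\ref{embedded-sphere-lemma} these are represented by embedded disjoint spheres, and the resulting change in $\tau_1$ is exactly an $\mbox{INT}(z)$ relation. The orientation ambiguity in the normal form of $z$ at self-linking indices ($i=j$) is absorbed by the symmetrized second term $(1,g,z_{ii}+\overline{z_{ii}})_{iji}$ of the $\mbox{INT}(z)$ relations displayed in \ref{subsec:link-sphere-pairing}. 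The main obstacle will be the middle step --- matching the geometric $\Z[\pi_1 M]$-count of the Whitney disk's interior intersections with $A$ to the algebraic formula for $\sigma_S(g_p)$. The two cases of $\sigma_S$ reflect fundamentally different topological pictures (crossing a separating sphere back and forth, versus winding across an $S^1 \times S^2$ cross-section), each of which requires careful bookkeeping of signs, tails, and framing-twist contributions to see that they reassemble into exactly the formula defining $\sigma_S$.
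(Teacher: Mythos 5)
Your overall strategy coincides with the paper's: reduce to a clasp link realizing the normal form of $z$, build the order 1 Whitney tower from strips along the guiding arcs, and read off $\sigma_S$ from the sub-words of $g_p$ cut out by the $S$-crossings. Two steps need repair, however. First, the reduction $\tau_1(L,S)=\tau_1(L_z,S)$ cannot be obtained by ``the argument used in the proof of Theorem~\ref{thm:tau(L)}'': that argument glues two null-concordances of the \emph{same} link along their common boundary to form 2-spheres, whereas $L\times I$ and $L_z\times I$ have different boundaries, so no such closed surface is available. The paper instead takes an order 1 Whitney concordance $A'$ from $L$ to $L_z$ and observes that $L\times I$ is homotopic rel boundary to the composite $A'+(L_z\times I)-A'$; the Whitney disks for the singularities of $A'$ and of $-A'$ stay in their own slabs, $S$ is pushed into the middle slab where its intersections with $L_z\times I$ are paired there, and the already-established homotopy invariance of $\tau_1$ for inessential annuli gives the equality. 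Some such device is needed, and it is exactly where the hypothesis $\tau_0(L)=\tau_0(L_z)$ enters (otherwise Whitney disks would have to cross between the slabs).

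Second, your framing step is wrong as justified. A single boundary twist on a Whitney disk contributes a tree of the form $(a,b,a)$ with a repeated univalent label; the AS relation makes such elements 2-torsion but does not kill them, and the FR relation only kills the \emph{sum} of the two boundary-twist contributions (equivalently, an even total number of twists) --- indeed trees of this type survive as the $\Z_2$ generators in the normal form of \ref{subsubsec:T1notation}. So if your strip Whitney disks genuinely required one boundary twist each, the claimed formula would acquire extra non-vanishing 2-torsion terms. Fortunately no twist is needed: the Whitney disks running along the clasp band between the two $S$-punctures are already framed, which is why the paper's computation makes no framing correction. Apart from these two points, your identification of the trees with $\sigma_S(g_p)$ --- tails appearing on the $j$-edge before the HOL normalization and transferred to the $S$-edge afterwards --- matches the paper's Figures~\ref{LSintersections3-fig} and \ref{LSintersectionsS2crossS1-fig}.
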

Here $z$ is written in normal form as in
subsection~\ref{subsec:link-sphere-pairing} above.
\begin{proof}
First of all, we claim that if $L'$ is any other link with
$\tau_0(L')=z$, then $\tau_1(L',S)=\tau_1(L,S)$:

As mentioned above in subsection~\ref{subsec:link-sphere-pairing},
the INT($L'$) and INT($L$) relations coincide, since they only
depend $z$. The equality of $\tau_0$ implies that there exists a
homotopy from $L'$ to $L$ whose trace is a collection of properly
immersed annuli $A'\looparrowright M\times I$ supporting an order
1 Whitney tower, and connecting the link components in opposite
ends of $M\times I$ (see \ref{subsec:order1wconc} below). By
running this homotopy partially forward, then waiting, and then
backward, in increasing increments, one can construct a homotopy
(rel boundary) from $L'\times I\subset M\times I$ to $A=A'+
(L\times I) -A'\looparrowright M\times I$, where we write the
composition of homotopies additively from left to right, and $-A'$
is the oppositely oriented $A'$ which runs from $L$ back to $L'$
(see Figure~\ref{LSintHtpy-fig}).
\begin{figure}
\centerline{\includegraphics[width=135mm]{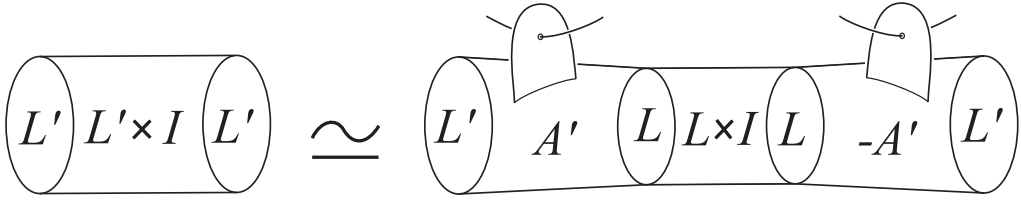}}
         \caption{If $L'$ is homotopic to $L$ by $A'$,
         then $L'\times I\subset M\times I$ is homotopic to
         $A'+(L\times I) -A'\looparrowright M\times I$.}
         \label{LSintHtpy-fig}
\end{figure}
Since $A'$ and $-A'$ each support an order 1 Whitney tower, $A$
supports an order 1 Whitney tower such that no Whitney disks
intersect the part of $M\times I$ containing $L\times I$. (If
$\tau_0(L')\neq \tau_0(L)$ then this construction would still
yield $A$ admitting an order 1 Whitney tower, but there would
necessarily be cancelling pairs of intersections with one point
from each pair in $A'$ and the other in $-A'$, so that the
corresponding Whitney disks would have to intersect the part of
$M\times I$ containing $L\times I$.) Now $S$ is homotopic into the
part of $M\times I$ containing $L\times I$, where Whitney disks
can be found for all intersections between $S$ and $L\times I$
such that the Whitney disks do not intersect the parts of $M\times
I$ containing $A'$ and $-A'$ (since $\tau_0(L\times I,S)=0$). It
follows from the homotopy invariance of $\tau_1$ that
$\tau_1(L',S)=\tau_1(L,S)$, proving the claim.

So to prove the lemma we are free to use any link $L$ with
$\tau_0(L)=z$. This will not be hard using a clasp link
(Figure~\ref{clasplink-guidingarcs-fig} and
subsection~\ref{subsec:clasp-links} below) whose intersections
with $S$ can be paired by Whitney disks that are contained in
neighborhoods of the arcs guiding the clasps.

\begin{figure}
\centerline{\includegraphics[width=105mm]{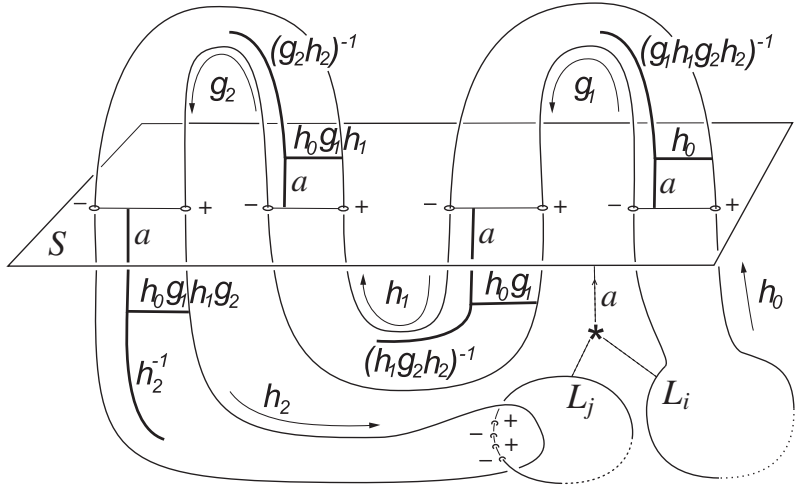}}
         \caption{Four parallel Whitney disks pair intersections between a
         separating sphere $S$ and a clasp link $L$ with $\tau_0(L)=(g)_{ij}$
         for $g=h_0g_1h_1g_2h_2$. The indicated decorations for the associated
         trees can be computed from the diagram, giving
         $\tau_1(L,aS)=(h_0,(g_1h_1g_2h_2)^{-1},a)_{ijS}-(h_0g_1,(h_1g_2h_2)^{-1},a)_{ijS}
         +(h_0g_1h_1,(g_2h_2)^{-1},a)_{ijS}-(h_0g_1h_1g_2,h_2^{-1},a)_{ijS}=
         (g,1,ag_1h_1g_2h_2-ah_1g_2h_2+ag_2h_2-ah_2)_{ijS}$, where the second equality comes from
         the HOL relations.}
         \label{LSintersections3-fig}
\end{figure}
Consider the case where $S$ is a separating sphere giving a
connected sum decomposition $M=M'\sharp_S M''$, and $z=+(g)_{ij}$,
is represented by a single positively signed order zero tree
(edge) oriented from $i$ to $j$, and labeled by $g\in\pi_1M$.
Writing $g=h_0\prod_{l=1}^n g_l h_l$, where the $g_l$ are carried
by $M'$ and the $h_l$ are carried by $M''$, we can create $L$,
with $\tau_0(L)=(g)_{ij}$, from the unlink by introducing a clasp
guided by an arc from the $i$th to the $j$th unlink component,
which intersects $S$ in $2n$ points, so that $A_i=L_i\times I$ has
$2n$ cancelling pairs of intersections with $S$ in some interior
slice of $M\times I$, as illustrated in
Figure~\ref{LSintersections3-fig} for the case $n=2$.

As can be seen in Figure~\ref{LSintersections3-fig}, these
cancelling pairs admit embedded Whitney disks, each of which winds
along the band of the clasp and has a single interior intersection
with $A_j=L_j\times I$. Using the product structure of $M\times I$,
these Whitney disks can be made pairwise disjoint by a small
perturbation. With our usual convention of orienting all edges
towards the trivalent vertex, the group element labels on the trees
associated to these Whitney disks can be seen to have the following
pattern: If the whiskers on the Whitney disks are chosen to respect
the factorization of $g$, then as we move through the Whitney disks
from $A_i$ towards $A_j$ (right to left in the figure), the group
elements labelling the $i$-edge read off increasing sub-words of
$g$, while the group elements labelling the $j$-edge read off
decreasing sub-words of $g^{-1}$, as illustrated in
Figure~\ref{LSintGeneralWdiskPair-fig}.
\begin{figure}
\centerline{\includegraphics[width=105mm]{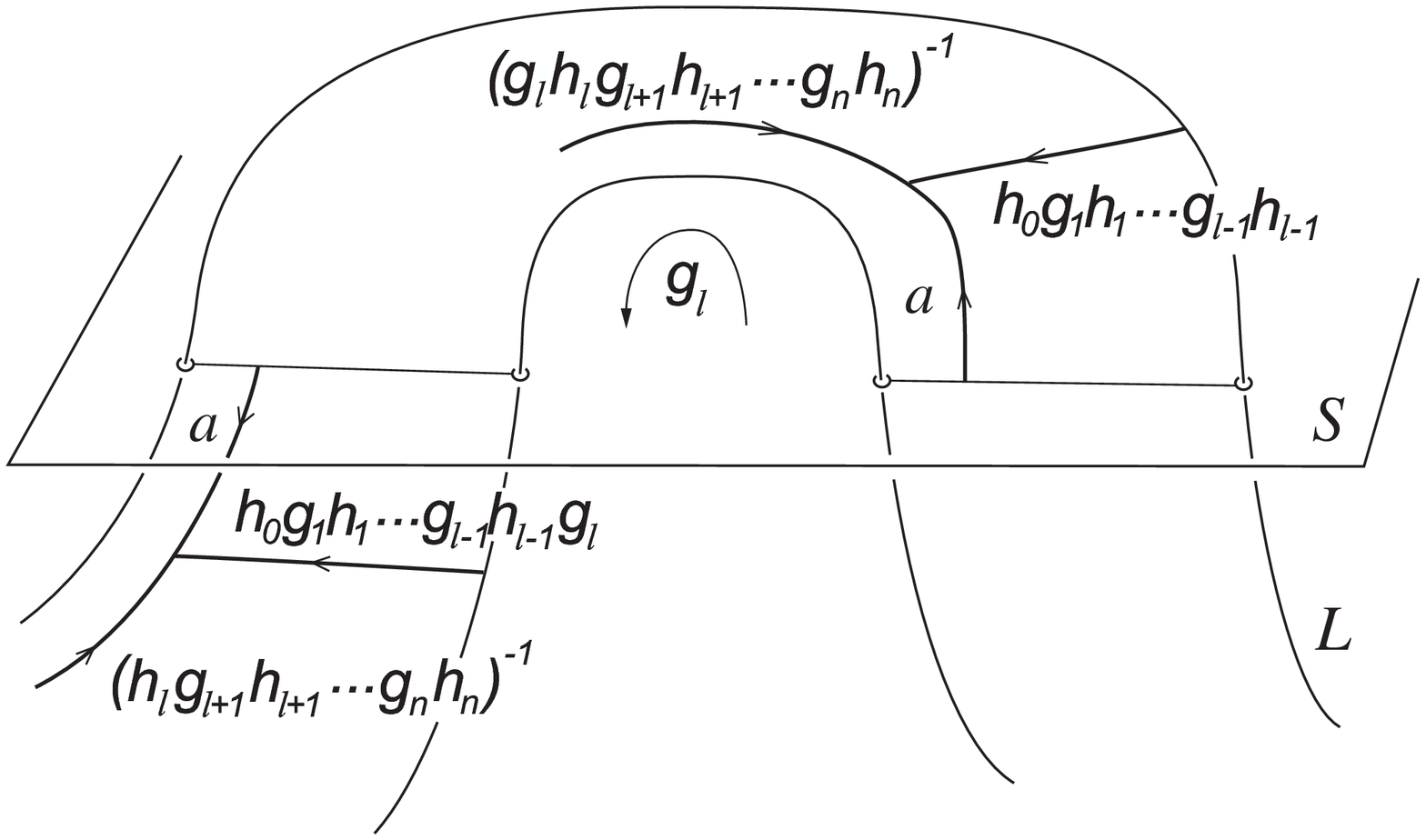}}
         \caption{}
         \label{LSintGeneralWdiskPair-fig}
\end{figure}
A choice of whisker on $S$ determines an element $a\in\pi_1M$
labelling all the edges corresponding to $S$. Now using the right
multiplication of the HOL relations (geometrically, changing the
whiskers on the Whitney disks) to get the group element $g$ on the
$i$-edge of each tree, simultaneously changes each $j$-edge label
to $1$, and changes the $S$-edges exactly as described by the map
$\sigma_S(g)$ (see the example in
\ref{subsubsec:separating-spheres-sigma} above), once the signs
are taken into account, as can be checked in
Figure~\ref{LSintersections3-fig} using our positive corner
convention. Thus in this case, we have
$\tau_1(L,aS)=(g,1,a\sigma_{S}(g))_{ijS}=(g,1,\sigma_{aS}(g))_{ijS}$
as desired.

It is easy to see that $\tau_1(L,S)$ behaves linearly under adding
parallel copies of $S$, with each new copy of $S$ having $2n$ more
parallel Whitney disks, with appropriate sign changes for
orientation-reversed copies of $S$.

Fixing $L$ and $z=(g)_{ij}$, and taking a different
(non-homotopic) separating sphere $S'$, gives a different
factorization of $g$, but the same construction works
simultaneously to confirm the computation for $\pi_1M$-linear
combinations of separating spheres:
$$
\tau_1(L,aS+a'S')=(g,1,a\sigma_S(g))_{ijS}+(g,1,a'\sigma_S'(g))_{ijS'}=(g,1,\sigma_{aS+a'S'}(g)).
$$
And for $z=-(g)_{ij}$, changing the sign of the clasp gives a
global change of sign for $\tau_1(L,S)$.

Since this entire construction takes place in a neighborhood of
the guiding arc for the clasp determining $g$, it can be iterated
any number of times giving the result for $z=\sum_{i\leq j}\sum_p
e _p\cdot(g_p)_{ij}$ and combinations of separating spheres.

\begin{figure}
\centerline{\includegraphics[width=130mm]{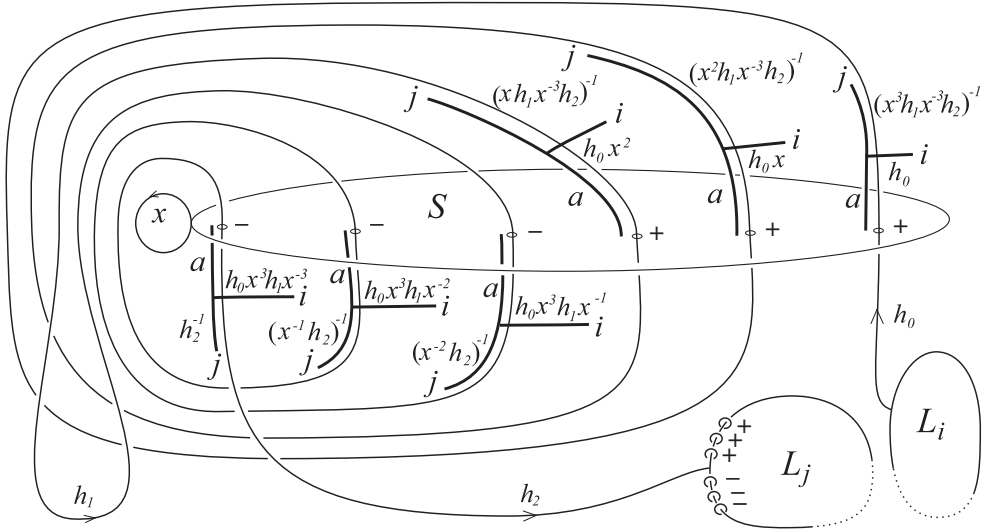}}
         \caption{The arc from $L_i$ to $L_j$ representing $g=h_0x^3h_1x^{-3}h_2$
         guides a clasp which, via the construction of Figure~\ref{LSintersections3-fig},
         leads to six Whitney disks -- one for each intersection between the arc and the
         non-separating sphere $S$.
         Only the trees of the Whitney disks are shown, with edges decorated by group elements
         as computed from the
         diagram.}
         \label{LSintersectionsS2crossS1-fig}
\end{figure}
Checking the case where $S$ is non-separating follows the same
construction: Noting that the construction only depends on the arc
guiding the clasp, Figure~\ref{LSintersectionsS2crossS1-fig} shows
a more schematic illustration of the computation of $\tau_1(z,S)$
for $S$ a cross-section of an $S^1\times S^2$ factor of $M$, in
the case $z=+(g)_{ij}$ with $g=h_0x^3h_1x^{-3}h_2$, where $x$
represents a generator of the circle factor in $\pi_1M$.
Collecting terms and using HOL relations again gives
$\tau_1(L,aS)=(g,1,\sigma_{aS}(g))_{ijS}$ (check with the example
in \ref{subsubsec:non-separating-spheres-sigma} above).
\end{proof}

\subsection{The indeterminacy subgroup
$\Phi(z)$}\label{subsec:Phi-def}

In the definition (\ref{subsec:intro-stable-concordance}) of the
relative order 1 intersection tree $\tau_1(L_z,L):=\tau_1(A)$,
there is an obvious indeterminacy coming from changing the
components of $A$ by connected sums with 2-spheres. By
Lemma~\ref{lem:link-sphere-pairing} this indeterminacy is
characterized by the subgroup $\Phi(z)$ which we define to be the
span of the elements
$$
\sum_{i\leq j}\sum_p
e_p\cdot(g_p,1,a\sigma_S(g_p))_{ijk}\in\cT_1(\pi_1M)/\mbox{INT($z$)}
$$
for $z=\sum_{i\leq j}\sum_p e_p\cdot(g_p)_{ij}$ written in normal
form, where $a$ ranges over $\pi_1M$ and $S$ ranges over a
generating set for $\pi_2M$. Each such element corresponds to the
effect on $\tau_1(A)$ of tubing a 2-sphere $aS$ into the $k$th
component $A_k$.


\section{Proof of
Theorem~\ref{thm:tau(Lz,L)}}\label{sec:tau1(L,Lz)-proof} This
section clarifies the definition of the relative order 1
intersection tree $\tau_1(L_z,L)$ given in
\ref{subsec:intro-stable-concordance}, and introduces notions
relevant to the proof of Theorem~\ref{thm:tau(Lz,L)}. The proof is
based on three lemmas which are left to the end of the section.

We remark that for links in 3-manifolds the equivalence relations
of singular concordance (co-bounding immersed annuli in $M\times
I$) and homotopy (level-preserving singular concordance) coincide
by \cite{Gi,Go}.

\subsection{The $\Psi$ whisker-change
actions}\label{subsec:Psi-stabilizers} The invariants $\tau_0(A)$
and $\tau_1(A)$ depend on fixing whiskers on the $m$ components of
$A$. In the present setting, the corresponding indeterminacies are
characterized by the following (left) actions of the $m$-fold
cartesian product $\Psi:=(\pi_1M)^m$ on $\cT_0(\pi_1M)$ and
$\cT_1(\pi_1M)$:

The action of $\psi=(\psi_1,\psi_2,\ldots,\psi_m)\in\Psi$ on
$\cT_0(\pi_1M)$ and $\cT_1(\pi_1M)$, respectively, is defined on
generators by
$$
(g_p)_{ij}\mapsto(\psi_i g_p\psi_j^{-1})_{ij}
$$
and
$$
(a,b,c)_{ijk}\mapsto(\psi_i a, \psi_j b, \psi_k b)_{ijk}.
$$

\subsection{Latitudes of singular concordances}\label{subsec:lats}

Let $K\subset M$ be a knot, and let $A\looparrowright M\times I$
be a singular concordance from $K$ to $K'$. A path that goes along
a whisker on $K$ in $M\times \{0\}$, then along $A$ to $K'$, and
then along a whisker on $K'$ in $M\times \{1\}$ is called a
\emph{latitude} of $A$. Via projection to $M\times\{0\}$, a
latitude of $A$ determines an element in $\pi_1M$.

For a singular concordance $A$ of a link $L\subset M$, latitudes
of the components determine an element $\psi\in\Psi$.

We write compositions of singular concordances additively from
left to right. Note that if $A_\psi$ and $A_\phi$ have latitudes
determining $\psi$ and $\phi$ respectively, then $A_\psi+A_\phi$
has latitudes determining the product $\psi\phi$. Also, if
$-A_\psi$ denotes the inversion of $A_\psi$, then $-A_\psi$ has
latitudes determining $\psi^{-1}$. Basepoints are assumed to be
taken in $M\times\{0\}$ (unless otherwise stated), so for $n=0,1$
we have:
$$
\tau_n(-A_\psi)=-\psi^{-1}\cdot\tau_n(A_\psi),
$$
as well as:
$$
\tau_n(A_\psi+ A_\phi)=\tau_n(A_\psi)+\psi \cdot\tau_n(A_\phi)
$$
and
$$
\tau_n(A_\psi- A_\phi)=\tau_n(A_\psi)-\psi\phi^{-1}
\cdot\tau_n(A_\phi)
$$
where the actions of $\Psi$ on $\cT_0$ and $\cT_1$ are both
denoted by ``\,$\cdot$\,''. Note that when inverting a singular
concordance, the orientation of $M\times I$ is assumed to remain
fixed. Specific orientation conventions are suppressed from
notation, but consistent throughout the examples.

\subsection{Order 1 Whitney concordance}\label{subsec:order1wconc}
Consider $L$ and $L'$ with
$\tau_0(L)=z=\tau_0(L')\in\cT_0(\pi_1M)$, for some choices of
whiskers. Placing $L$ and $L'$ in either end of $M\times I$ and
connecting the components of singular null-concordances $D$ and
$D'$ by disjointly embedded tubes gives a singular concordance $A$
from $L$ to $L'$. Using the whiskers on $L\subset M\times\{0\}$,
we have
$$
\tau_0(A)=\tau_0(D)-\psi\cdot\tau_0(D')=z-\psi\cdot z
$$
for some $\psi\in\Psi$ which depends on the whiskers and the
choices of tubes. Since we can re-choose the tubes in the
construction of $A$ to realize any element in $\Psi$, we can
arrange for $\psi$ to be trivial, so that
$\tau_0(A)=0\in\cT_0(\pi_1M)$, which implies that $L$ and $L'$ are
order 1 Whitney concordant.

(Note that $A$ can be constructed to be the trace of a homotopy
from $L$ to $L'$: Take $D$ and $D'$ to be homotopies to the
unlink, and realize the tubes by isotopies of the unlink
components.)

It follows that $\tau_0(L)\in\cT_0(\pi_1M)$ classifies order 1
Whitney concordance for links of null-homotopic knots, since the
converse is given by the last line of the proof of
Theorem~\ref{thm:tau(L)} in \ref{sub-sec:proof-of-thm-tau(L)}.

Note that the same argument shows that $\tau_1(L)\in\cT_1(\pi_1M)$
classifies order 2 Whitney concordance, hence stable concordance,
for $L$ with $\tau_0(L)=0$. But the classification for
$\tau_0(L)\neq 0$ -- as per Theorem~\ref{thm:tau(Lz,L)} -- is more
subtle (as is the classification in the case of essential knots
given in Theorem~\ref{thm:essential-tau(Lz,L)} of
Section~\ref{sec:essential-knots}).

\subsection{Clasp links}\label{subsec:clasp-links} For each
$z\in\cT_0(\pi_1M)$ the link $L_z$ in Theorem~\ref{thm:tau(Lz,L)}
can be taken to be a \emph{clasp link} constructed from an
$m$-component unlink $U$ in the following way. Fix whiskers and
orientations on $U$. For each $e_p\cdot (g_p)_{ij}$ in $z$, choose
$|e_p|$ embedded \emph{guiding arcs} joining $U_i$ and $U_j$ which
represent $g_p\in\pi_1M$. Replace each guiding arc by adding a
band to the $i$th component which ``clasps'' the $j$th component
as illustrated in Figure~\ref{clasplink-guidingarcs-fig}, with the
sign of the clasp taken so that the intersection point in the
trace of the homotopy which undoes the clasp has sign equal to the
sign of $e_p\in\Z$.

For given $z$, any two clasp links $L_z$ and $L'_z$ are order $1$
Whitney equivalent, since $\tau_0(L_z)=z=\tau_0(L'_z)$. Fixing a
chosen clasp link $L_z$ gives the classification of
Theorem~\ref{thm:tau(Lz,L)}. Up to isotopy, there are three kinds
of indeterminacies coming from the choices in the construction of
a clasp link: twisting of the clasps (around the guiding arcs),
knotting and linking of the guiding arcs, and the configurations
of the guiding arc endpoints on $U$. As illustrated in
Section~\ref{sec:examples} below, for any two choices the
difference $\tau_1(L_z,L'_z)$ can be explicitly computed.


\subsection{Stabilizers of the $\Psi$-action on $\cT_0$}
\label{subsec:Psi-stabilizers} The proof of
Theorem~\ref{thm:tau(Lz,L)}, as well as its constraints, will
depend on understanding the stabilizers of the $\Psi$-action on
$\cT_0(\pi_1M)$.

For $z\in\cT_0(\pi_1M)$ written in normal form (as in
subsection~\ref{subsec:link-sphere-pairing}), the action $z\mapsto
\psi\cdot z$ of $\psi=(\psi_1,\psi_2,\ldots,\psi_m)\in\Psi$ is
$$
z=\sum_{i\leq j}\sum_p e_p\cdot(g_p)_{ij}\mapsto\sum_{i\leq
j}\sum_p e_p\cdot(\psi_i g_p\psi_j^{-1})_{ij},
$$
so $\psi$ is in the stabilizer $\Psi_z$ of $z$ if and only if for
all $i$ and $j$
$$
\sum_p e_p\cdot(g_p)_{ij} = \sum_p e_p\cdot(\psi_i
g_p\psi_j^{-1})_{ij},
$$
which means that each pair of factors $\psi_i$ and $\psi_j$
permutes the elements $g_p$, and if $i=j$ possibly sends some
$g_p$ to $g_p^{-1}$. (Of course permuting the $g_p$ in $z_{ij}$ is
not sufficient for $\psi$ to be in $\Psi_z$ since the coefficients
of the permuted elements would also have be equal.)

\begin{def}\label{def:untwisted-stablizer}
A stabilizer $\Psi_z$ is called \emph{untwisted} if for all
$\psi\in\Psi_z$, and all $i$ and $j$,
$$
g_p = \psi_i g_p\psi_j^{-1},
$$
and $\Psi_z$ is called \emph{twisted} otherwise. So untwisted
stabilizers are exactly those which induce the identity map on the
fundamental group elements in $z_{ij}$ for all $i$ and $j$.
\end{def}

Will also refer to an element $\psi$ of $\Psi_z$ as \emph{twisted}
or \emph{untwisted}, depending on whether $\psi$ induces a
non-trivial or trivial permutation of the elements in $z$; so that
$\Psi_z$ is untwisted if every $\psi$ in $\Psi_z$ is untwisted. As
discussed in \ref{subsec:twisted stabilizers} below, in a sense
``most'' $z$ have untwisted stabilizers.


\subsection{$\tau_1(L_z,L)$ is
well-defined.}\label{subsec:tauLzL-well-defined} As discussed in
subsection~\ref{subsec:order1wconc} above, it is natural to
interpret the values of the ``absolute'' invariants $\tau_0(L)$
and $\tau_1(L)$ modulo the respective $\Psi$-actions. This means
that these invariants are completely independent of choices of
whiskers.

In the definition of $\tau_1(L_z,L)$ we consider $L_z$ to be
fixed, including the choice of whiskers. Then, for each $L$ the
value of $\tau_1(L_z,L)$ in the quotient of $\cT_1(\pi_1M)$ by
INT($z$) and $\Phi(z)$ is taken modulo the action of the
stabilizer $\Psi_z$. The reason for this is essentially due to
Lemma~\ref{lem:lat-in-stabilizer} below as will become clear
shortly.

Let $A$ and $A'$ be singular concordances from $L_z$ to $L$
supporting order 1 Whitney towers. Then the composition $A-A'$ is
a singular self-concordance of $L_z$ supporting an order 1 Whitney
tower. By Lemma~\ref{lem:lat-in-stabilizer} below, latitudes of
$A-A'$ determine an element $\psi\in\Psi_z$. Since $\Psi_z$ is
assumed to be untwisted, by Lemma~\ref{lem:untwisted-htpies} below
there exists a self-homotopy $H^0_{\psi^{-1}}$ of $L_z$ with
latitudes determining $\psi^{-1}$ such that
$\tau_1(H^0_{\psi^{-1}})=0\in\cT_1(\pi_1M)$. Now the singular
self-concordance $A-A'+H^0_{\psi^{-1}}$ of $L_z$ has latitudes
determining the trivial element $\psi\psi^{-1}\in\Psi$, so by
Lemma~\ref{lem:straightening-lemma} below, $A-A'+H^0_{\psi^{-1}}$
is regularly homotopic (rel $\partial$) to the connected sum of
$L_z\times I$ with disjointly embedded 2-spheres. By the homotopy
invariance of $\tau_1$ and the definition of $\Phi(z)$
(\ref{subsec:Phi-def}) we have
$$
\tau_1(A-A'+H^0_{\psi^{-1}})=\tau_1(A)-\psi\cdot \tau_1(A')\in
\Phi(z),
$$
so $\tau_1(L_z,L)$ does not depend on the choice of $A$.

\subsection{The equivalence of statements (i)-(iv) in
Theorem~\ref{thm:tau(Lz,L)}}\label{subsec:LzLstatements-proof} We
will show the equivalence of (i) and (iii); the equivalence of
(ii), (iii), and (iv) follows from
Theorem~\ref{thm:tau-for-inessential-A} in
Section~\ref{sec:tau-for-inessential-annuli}.

If $L$ and $L'$ are order 2 Whitney concordant, then they cobound
an immersed annulus $A'$ which supports an order 2 Whitney tower.
So any singular concordance $A$ used to compute $\tau_1(L_z,L)$
can be extended by $A'$ to compute
$\tau_1(L_z,L')=\tau_1(A+A')=\tau_1(L_z,L)$, since $\tau_1(A')$
vanishes.

On the other hand, the equality $\tau_1(L_z,L')=\tau_1(L_z,L)$
means that if $A$ and $A'$ are order 1 Whitney concordances from
$L_z$ to $L$ and $L'$ respectively, then
$\tau_1(A')=\psi\cdot\tau_1(A)$ modulo $\Phi(z)$, for some
$\psi\in\Psi_z$. After realizing INT($z$) relations and taking
connected sums with some 2-spheres (if necessary) we may arrange
that $\tau_1(A')=\psi\cdot\tau_1(A)\in\cT_1(\pi_1M)$. By
Lemma~\ref{lem:untwisted-htpies}, there exists a self-homotopy
$H^0_{\psi^{-1}}$ of $L_z$ with $\tau_1(H^0_{\psi^{-1}})=0$ and
latitudes determining $\psi^{-1}$. Now the composition
$-A+H^0_{\psi^{-1}}+A'$ is an order 1 Whitney concordance from $L$
to $L'$, and taking the whiskers on $L_z$ at the end of $-A$ we
have
$$
\tau_1(-A+H^0_{\psi^{-1}}+A')=-\tau_1(A)+\psi^{-1}\cdot\tau_1(A')=
-\tau_1(A)+\psi^{-1}\psi\cdot\tau_1(A)=0
$$
so by Theorem~\ref{thm:tau-for-inessential-A} it follows that
$-A+H^0_{\psi^{-1}}+A'$ is homotopic to a singular concordance
from $L$ to $L'$ which supports an order 2 Whitney tower.

\subsection{Lemmas}\label{subsec:lemmas}
The proof of Theorem~\ref{thm:tau(Lz,L)} is completed by the
following three lemmas.

\begin{lem}[\cite{S}]\label{lem:straightening-lemma}
If a singular self-concordance $A$ of a knot $K\subset M$ has a
latitude that represents the trivial element $1\in\pi_1M$, then
$A$ is homotopic (rel $\partial$) to the connected sum of an
embedded 2--sphere and the embedded annulus $K \times I$ in $M
\times I$.

Furthermore, if $A$ also supports a Whitney tower of order 1 (or
greater), and $K$ has trivial order zero intersections with
2--spheres in $M$, then the above homotopy (rel $\partial$) can be
taken to be a \emph{regular} homotopy.
\end{lem}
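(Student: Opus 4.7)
My strategy is to use the vanishing-latitude hypothesis to straighten a meridional arc of $A$ into a vertical product arc in $M \times I$, reducing the first statement to a $\pi_2 M$ computation that is handled by Lemma~\ref{embedded-sphere-lemma}; the second statement then requires controlling the algebraic count of cusp homotopies using the Whitney tower data and the sphere-intersection hypothesis.

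For the first statement, choose a basepoint $\ast \in K$ (taken to agree with the basepoint of $M$ via the whisker on $K$) and parametrize $A$ as $S^1 \times I \to M \times I$. The image of the meridional arc $\{\ast\} \times I$ under $A$ is a path $\gamma$ from $(\ast,0)$ to $(\ast,1)$, and the latitude element is the class of $\gamma$ after projection $M \times I \to M$. The hypothesis says $\gamma$ is homotopic rel endpoints to the product vertical arc $\{\ast\} \times I \subset M \times I$, since $M \times I$ deformation retracts onto $M$. Cutting the annulus $S^1 \times I$ along $\{\ast\} \times I$ turns $A$ into a map of a disk whose boundary loop is null-homotopic in $M \times I$; replacing the two cut copies of $\gamma$ by the vertical arc keeps the disk boundary null-homotopic (the change is accounted for by the rel-endpoints homotopy of $\gamma$ glued along the two cuts), so the new boundary extends over the disk. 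The upshot is a homotopy of $A$ rel $\partial$ to a map that straightens the meridian exactly.

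After straightening, $A$ and the product embedding $K \times I$ agree on $\partial(S^1 \times I) \cup \{\ast\} \times I$. Cutting once more along $\{\ast\} \times I$ and gluing the two disk maps along their common boundary produces a map $S^2 \to M \times I$ representing a class $[S] \in \pi_2(M \times I) = \pi_2 M$. By Lemma~\ref{embedded-sphere-lemma}, $[S]$ is represented by an embedded 2-sphere $S$, and the standard correspondence between rel-$\partial$ homotopy classes of disk maps and $\pi_2$ yields $A \simeq S \# (K \times I)$ rel $\partial$, completing the first statement.

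For the regular homotopy statement, the only non-regular local moves in a generic homotopy are cusps, measured by the integral self-intersection $\mu \in \Z[\pi_1 M]$ and the normal Euler number. The order 1 Whitney tower on $A$ pairs every double point by a framed Whitney disk, giving $\mu(A) = 0$ precisely in $\Z[\pi_1 M]$. On the right side, $S$ can be taken to lie in a horizontal slice $M \times \{t\}$, where it has trivial normal bundle in $M \times I$ and meets $K \times I$ algebraically in $\lambda_0(K,S)$; the hypothesis that $K$ has trivial order zero intersection with every 2-sphere eliminates these intersection counts, so $S \# (K \times I)$ may be arranged to be embedded with vanishing $\mu$ and matching framing data. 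The standard fact (Chapter~1 of \cite{FQ}) that homotopic immersed surfaces with equal $\mu$ and normal Euler number are regularly homotopic then upgrades the homotopy of the previous step to a regular one. The main obstacle is precisely this cusp-elimination step: bookkeeping the framing contribution from the tube joining $S$ and $K \times I$ requires the sphere-intersection hypothesis on $K$ in an essential way, as without it one cannot rule out integer cusp discrepancies between the two sides.
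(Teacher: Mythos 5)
Your argument is correct and follows the paper's proof in all essentials: straightening along the trivial latitude (the paper phrases this as surgering the projected torus in $M$ along the embedded curve dual to $\overline{K}$) reduces the first claim to a $\pi_2$ element handled by Lemma~\ref{embedded-sphere-lemma}, and the regularity claim comes down to both $A$ and $K\times I\,\sharp\, S'$ having all double points in algebraically cancelling pairs, so that the cusps of a generic homotopy cancel and can be traded for finger and Whitney moves. One small overstatement: the vanishing of $\lambda_0(K\times I,S)$ only gives that $S\,\sharp\,(K\times I)$ has $\mu=0$, i.e.\ its double points occur in cancelling pairs, not that it can be made embedded --- but $\mu=0$ is all your regular-homotopy step actually uses.
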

Note that $K$ is not assumed to be null-homotopic.

\begin{proof}
The properly immersed $A$ projects to a map of a torus
$T\rightarrow M=M\times \{0\}$, with the image of a latitude in
$A$ projecting to a loop $C$ in the image of $T$ which is
contractible in $M$. The inverse image $\overline{C}$ of $C$ is
algebraically dual to the inverse image $\overline{K}$ of $K$ in
the domain torus $T$, so after a homotopy we may assume that
$\overline{C}$ is geometrically dual to $\overline{K}$ and
embedded. Then $C$ bounds an immersed 2-disk $D$ in $M$ which
surgers the map $T\rightarrow M$ to a map of a 2-sphere
$S\rightarrow M$. This surgery lifts to a homotopy (rel
$\partial$) of $A$ in $M\times I$ to $K\times I\sharp S'$, where
$S'$ is a lift of $S$. By Lemma~\ref{embedded-sphere-lemma}, $S'$
can be assumed to be embedded.

The second statement of Lemma~\ref{lem:straightening-lemma} follows
from the fact that, up to isotopy, a homotopy of surfaces in a
4-manifold consists of a sequence of finger moves, Whitney moves,
and cusp homotopies. If the cusp homotopies occur in cancelling
pairs, then they can be replaced (up to isotopy) by finger moves
(pairs of births) and Whitney moves (pairs of deaths), so that the
homotopy is a regular homotopy. So, if $K\times I\sharp S'$ and $A$
both have all self-intersections occurring in cancelling pairs -- as
will be the case under the further assumptions -- then the above
homotopy may be arranged to be a regular homotopy.
\end{proof}

\begin{lem}\label{lem:lat-in-stabilizer}
For any $z\in\cT_0(\pi_1M)$, let $L_z\subset M$ be any link such
that $\tau_0(L_z)=z\in\cT_0(\pi_1M)$, and let
$A_{\psi}\looparrowright M\times I$ be any singular
self-concordance of $L_z$, with latitudes determining
$\psi\in\Psi$. Then:

If $A_{\psi}$ admits an order one Whitney tower, then
$\psi\in\Psi_z$.
\end{lem}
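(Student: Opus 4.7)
The plan is to combine two facts: that the existence of an order 1 Whitney tower on $A_\psi$ forces $\tau_0(A_\psi)=0$, and that $\tau_0$ is additive under composition of singular concordances in a way that encodes the whisker discrepancy $\psi$.

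First I would observe that, although $A_\psi$ may have annular components, each such component is \emph{inessential}: its two boundary circles lie on the same null-homotopic knot of $L_z$, so the induced map on $\pi_1$ is trivial. As noted at the start of Section~\ref{sec:tau-for-inessential-annuli}, the order zero intersection tree $\tau_0(A_\psi)\in\cT_0(\pi_1M)$ is therefore well defined exactly as in the disk case, and its vanishing is equivalent to the existence of an order 1 Whitney tower on $A_\psi$. Hence the hypothesis forces
\[
\tau_0(A_\psi)=0\in\cT_0(\pi_1M).
\]

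Next I would repeat the calculation of subsection~\ref{subsec:order1wconc}. Choose any singular null-concordance $D$ of $L_z$ and stack it \emph{above} $A_\psi$, so that the composition $A_\psi+D$ is a singular null-concordance of the bottom copy of $L_z$. Computing with the canonical bottom whiskers throughout, the additivity formula of subsection~\ref{subsec:lats} gives
\[
z=\tau_0(L_z)=\tau_0(A_\psi+D)=\tau_0(A_\psi)+\psi\cdot\tau_0(D)=0+\psi\cdot z,
\]
where $\tau_0(D)=z$ because $D$ is a singular null-concordance of the top copy of $L_z$ with its canonical whiskers. Therefore $\psi\cdot z=z$, which is by definition the condition $\psi\in\Psi_z$.

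The only point requiring care is verifying that the $\psi$ appearing in the additivity formula agrees with the $\psi$ prescribed by the latitudes of $A_\psi$. This is essentially the content of the definition in subsection~\ref{subsec:lats}: the latitudes measure precisely how the canonical whiskers at the two ends of $A_\psi$ fail to match when pulled across $A_\psi$, and this is exactly the discrepancy that enters the formula when $\tau_0(D)$ is computed in $D$'s natural (top) whiskers but is then re-expressed from the bottom to compute $\tau_0(A_\psi+D)$. I expect no obstacle beyond this routine whisker bookkeeping.
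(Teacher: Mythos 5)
Your argument is correct, but it reaches the conclusion by a different route than the paper. You cap $A_\psi$ off with a singular null-concordance $D$ of the top copy of $L_z$ and compute $z=\tau_0(A_\psi+D)=\tau_0(A_\psi)+\psi\cdot\tau_0(D)=\psi\cdot z$, where the first equality is the independence of $\tau_0(L_z)$ from the choice of null-concordance (the union-of-two-null-concordances argument in \ref{sub-sec:proof-of-thm-tau(L)}, resting on Lemma~\ref{embedded-sphere-lemma}), and the vanishing of $\tau_0(A_\psi)$ is immediate from the order 1 Whitney tower hypothesis. The paper instead composes $A_\psi$ with $-H_\psi$, where $H_\psi$ is the explicit self-homotopy of \ref{subsec:order1wconc} with latitudes $\psi$ and $\tau_0(H_\psi)=z-\psi\cdot z$, and then invokes Lemma~\ref{lem:straightening-lemma} to conclude that the trivially-latituded composite is homotopic to $L_z\times I$ summed with $2$-spheres and hence has vanishing $\tau_0$. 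Your version is more elementary in that it avoids the straightening lemma (and the torus-surgery argument behind it) entirely, and you are right that the only delicate point is the whisker bookkeeping, which is exactly what the additivity formula of \ref{subsec:lats} encodes with $\psi$ being the latitude element. The trade-off is that your argument leans on the existence of a null-concordance, i.e.\ on the components of $L_z$ being null-homotopic, whereas the paper's route through Lemma~\ref{lem:straightening-lemma} is the one that carries over to the essential-knot setting of Section~\ref{sec:essential-knots}, where no null-concordance exists and the same structural step is needed again.
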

\begin{proof}
As described in subsection~\ref{subsec:order1wconc}, there exists
a self-homotopy $H_{\psi}$ of $L_z$ with latitudes determining
$\psi$ such that $\tau_0(H_\psi)=z-\psi\cdot z \in \cT_0(\pi_1M)$.
Since the composition $A_{\psi}-H_{\psi}$ has latitudes
representing the trivial element $\psi\psi^{-1}\in\Psi$, it
follows from Lemma~\ref{lem:straightening-lemma} that
$A_{\psi}-H_{\psi}$ is homotopic (rel $\partial$) to the connected
sum of $L_z\times I$ with some 2--spheres. By the homotopy
invariance of $\tau_0$, and since null-homotopic knots have
trivial order zero intersections with 2--spheres, we have that
$\tau_0(A_{\psi}-H_{\psi})=0\in\cT_0(\pi_1M)$. So, computing in
$\cT_0(\pi_1M)$ we have
$$
0=\tau_0(A_{\psi}-H_{\psi})=\tau_0(A_{\psi})-\tau_0(H_{\psi})=0-(z-\psi\cdot
z)
$$
since $\tau_0(A_\psi)=0$, which shows that $\psi\in\Psi_z$.
\end{proof}

\begin{lem}\label{lem:untwisted-htpies}
Let $L_z\subset M$ be a clasp link with $\tau_0(L_z)=z
\in\cT_0(\pi_1M)$, such that $\Psi_z$ is untwisted. Then for any
$\psi\in\Psi_z$, there exists a self-homotopy $H^0_{\psi}$ of
$L_z$ with latitudes determining $\psi$, and an order one Whitney
tower $\cW$ on $H^0_{\psi}$, such that
$\tau_1(\cW)=0\in\cT_1(\pi_1M)$.
\end{lem}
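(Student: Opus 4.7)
The approach is to build $H^0_\psi$ by coherently dragging $L_z$ around loops representing $\psi$, exploiting untwistedness to arrange that the clasping bands return to themselves, and then to exhibit an order 1 Whitney tower on the result whose Y-trees vanish in $\cT_1(\pi_1M)$.

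First, for each $k$ I would represent $\psi_k\in\pi_1M$ by a smooth loop $\gamma_k$ in $M$ based at the whiskered basepoint of the $k$-th component of $L_z$, with the $\gamma_k$ chosen in general position. Recall from \ref{subsec:clasp-links} that $L_z$ is built from an unlink by adding a clasping band for each term $e_p(g_p)_{ij}$ of $z$, guided by an arc representing $g_p$. For each such clasp, the untwisted hypothesis $\psi_i g_p\psi_j^{-1}=g_p$ is precisely the condition that, when the two endpoints of its guiding arc are dragged along $\gamma_i$ and $\gamma_j$ respectively, the arc is transported to one homotopic rel endpoints to the original. Consequently the simultaneous dragging of each component $L_z^k$ around $\gamma_k$ extends across every clasping band, producing a well-defined self-homotopy $H^0_\psi$ of $L_z$ whose latitudes determine $\psi$. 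By construction $\tau_0(H^0_\psi)=z-\psi\cdot z=0$, so an order 1 Whitney tower $\cW$ on $H^0_\psi$ exists.

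Second, I would build $\cW$ explicitly from the sweeping motion. The transverse singularities of $H^0_\psi$ in $M\times I$ arise precisely where a dragged component sweeps past another component or clasp of $L_z$; by the closed-loop structure of the dragging, each such crossing is matched with a second, oppositely signed crossing encountered as the sweep completes its trip around $\gamma_k$. For each such cancelling pair the trace of the intervening sweep supplies a natural Whitney disk $W$, whose interior lies in a thin product neighborhood in $M\times I$. Any framing discrepancy on $W$ can be corrected by boundary twists that, by the framing and antisymmetry relations in Figure~\ref{Y-relations-fig}, contribute zero to $\tau_1(\cW)$.

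The main obstacle is to verify that $\tau_1(\cW)=0$. Untwistedness $\psi_i g_p\psi_j^{-1}=g_p$ makes the bookkeeping of edge decorations consistent across each pair of cancelling crossings along a $\gamma_k$-sweep: the two Y-trees associated to such a cancelling pair carry the same group-element decorations (after the appropriate HOL normalizations) but opposite signs at the trivalent vertex, and so cancel termwise in $\cT_1(\pi_1M)$ via the AS, OR, and HOL relations. The twisted case fails exactly here -- a nontrivial permutation of the clasp elements $g_p$ would force the decorations on the two Y-trees of a cancelling pair to differ, producing trees that are not eliminated by the $\cT_1$ relations and yielding a nonzero $\tau_1(\cW)$.
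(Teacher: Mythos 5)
Your overall strategy (drag $L_z$ around loops representing $\psi$, using untwistedness to see that the clasps return to themselves) is the same as the paper's, but two steps that you treat as routine are in fact where the real content lies, and as written both are gaps. First, your mechanism for the vanishing of $\tau_1(\cW)$ is not how $\tau_1$ is computed: Y-trees are assigned to intersections between Whitney disk \emph{interiors} and the annuli, not to the cancelling pairs of order zero intersection points themselves, so there are no ``two Y-trees associated to a cancelling pair'' to cancel termwise. The correct reason $\tau_1(\cW)=0$ is that the construction can be arranged so that the Whitney disk interiors meet \emph{no} sheet of the annuli at all -- their only interior intersections are with each other, which are order $2$ and do not contribute. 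The paper achieves this by a three-stage homotopy: first undo the clasps (creating exactly one cancelling pair per clasp, with Whitney disks traced out by the guiding arcs), then isotope the resulting unlink, whiskers, and guiding arcs around loops representing $\psi$ (untwistedness is exactly what lets the arcs be carried along; the only new singularities are arc--arc crossings, i.e.\ Whitney-disk-on-Whitney-disk intersections), then redo the clasps. Dragging $L_z$ with its clasps intact, as you propose, forfeits this control: when a full link component sweeps past another there is no reason the resulting intersection points pair up geometrically ``along the sweep,'' and even after choosing Whitney disks for them nothing prevents their interiors from meeting the annuli and contributing nontrivial Y-trees. The equality $\tau_0(H^0_\psi)=z-\psi\cdot z=0$ only gives you the \emph{existence} of some order $1$ Whitney tower, not one with vanishing $\tau_1$.

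Second, your framing fix is wrong: a boundary twist does not contribute zero. Each boundary twist contributes a generator of the form $(a,b,a)_{iji}$, which by the AS relation is $2$-torsion but generally nonzero; only an \emph{even} total number of twists, combined with the FR relation, yields zero (this is exactly the computation in the ``independence of Whitney disk interiors'' step of \ref{subsec:htpy-invariance-of-tau-proof}). So you must show that the framing obstruction on each Whitney disk in your tower is even, after which it can be killed by interior twists (which create only order $2$ self-intersections and hence do not affect $\tau_1$). This is a genuinely nontrivial input: the paper tracks the Whitney section via parallel copies of the guiding arcs, reduces the question to the change of framing of the corresponding loops on an embedded surface $F$ under the isotopy, and invokes Chernov's theorem that affine self-linking numbers of framed knots in $3$-manifolds vanish modulo $2$. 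Without some such argument an odd framing defect on a single Whitney disk would leave a nonzero $2$-torsion element in $\tau_1(\cW)$, and the lemma would fail.
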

\begin{proof}
The construction of $H^0_{\psi}$ has three steps, and is based on
the observation that undoing and re-doing clasps describes a
self-homotopy supporting an order 1 Whitney tower whose Whitney
disks are described by the guiding arcs of the clasps
(Figure~\ref{UndoClaspWdisk-fig}):
\begin{figure}
\centerline{\includegraphics[width=135mm]{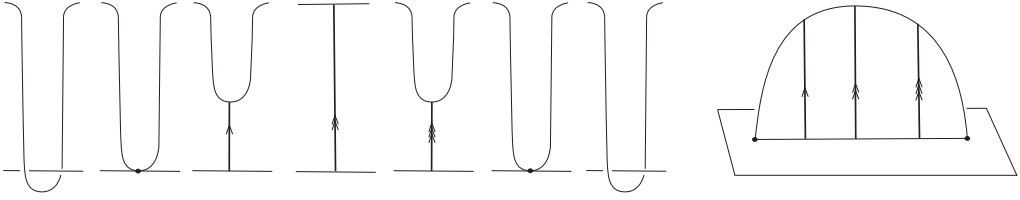}}
         \caption{The guiding arc traces out a Whitney disk as a clasp is
         undone then redone.}
         \label{UndoClaspWdisk-fig}
\end{figure}

First, undoing the clasps by crossing changes describes a homotopy
from $L_z$ to the unlink $U$, with the guiding arcs tracing out
corners of embedded Whitney disks near the crossing-change
intersection points (Figure~\ref{clasplink-guidingarcs-fig}). The
components of $U$ can be assumed to lie in small 3-balls, and the
guiding arcs between $U_i$ and $U_j$ represent the elements
$(g_p)_{ij}$ in $z_{ij}$.

Now, the untwistedness of $\psi$ means that $g_p=\psi_i g_p
\psi^{-1}_j\in\pi_1M$ for all $i$ and $j$ (and $p$ which depends
on $i$ and $j$) which is exactly the condition that a self-isotopy
of $U$ around loops representing $\psi$ extends to self-homotopies
of the guiding arcs and whiskers which are attached to $U$. Each
of these self-homotopies of guiding arcs traces out the center
rectangle of an immersed Whitney disk in $\cW$. The self-isotopy
of $U$ extends to a self-isotopy of the small 3-ball neighborhoods
of the components of $U$ together with a neighborhood of the
whiskers, which forms a thickened wedge of arcs. So the only
singularities created in this second step of the construction are
possible crossing changes between the guiding arcs which are
interior intersections among the Whitney disks of $\cW$.

\begin{figure}
\centerline{\includegraphics[width=140mm]{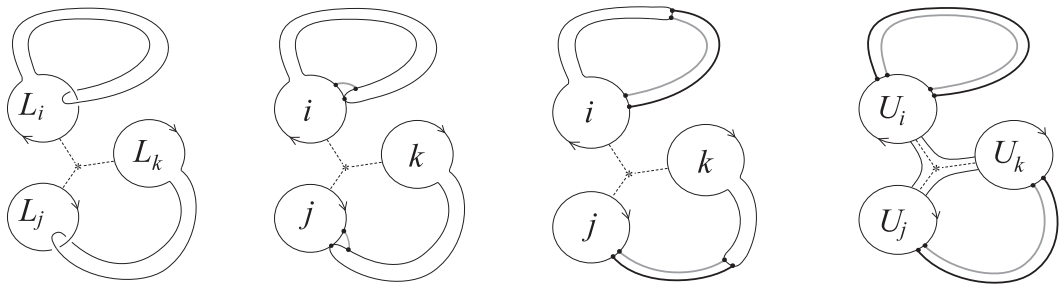}}
         \caption{From left to right, the first part of the homotopy
         $H^0_{\psi}$, and from right to left the third part.
         The guiding arcs trace out Whitney disks, and gray arcs trace out an extension of the
         Whitney section. The right most picture includes the surface $F$.}
         \label{clasplink-htpy-framed-Big-Grey-fig}
\end{figure}

The third step is to run the first homotopy backwards from $U$ to
$L_z$, recreating the clasps by crossing changes, each of which
gives an intersection point which forms a cancelling pair with the
corresponding intersection created when undoing the clasps in the
first step. In this third step the guiding arcs shrink down and
disappear at the intersection points, tracing out the remaining
corners of the Whitney disks in $\cW$.

The union $H^0_{\psi}$ of these three steps is a self-homotopy of
$L_z$ which by construction has latitudes determining $\psi$.
Assuming for the moment that the Whitney disks are framed, it is
clear that $\tau_1(\cW)=0\in\cT_1(\pi_1M)$ because the only
singularities are interior intersections among the Whitney disks
which do not contribute to $\tau_1$ (they are \emph{order 2}
intersections). It remains to show that the Whitney disks can be
arranged to be framed, which will follow essentially from the fact
that Chernov's affine self-linking numbers for framed knots in
3-manifolds vanish modulo 2 (\cite{Ch}).

Recall that the normal section which defines the Whitney disk
framing obstruction is defined by pushing one Whitney disk
boundary arc tangentially along its surface sheet and pushing the
other boundary arc in the normal direction to its surface sheet.
Observe that after a small isotopy ($\pi/2$ radians in the normal
circle bundle) the part of the section that was pushed normally to
a surface sheet will lie in the surface sheet, except for a short
arc near each intersection point (in the right-hand side of
Figure~\ref{Framing-of-Wdisk-fig} picture the back half of the
dotted loop sliding up to the top of the torus). This isotopy does
not change the framing obstruction, and will make it easier to
compute the framings in the construction of $\cW$.

Push-offs of the Whitney disks in $\cW$ extending the Whitney
sections are given by parallel copies of the guiding arcs in the
above description of $H^0_{\psi}$
(Figure~\ref{clasplink-htpy-framed-Big-Grey-fig}). As shown in
Figure~\ref{clasplink-htpy-framed-Big-Grey-fig}, the parts of the
Whitney disks in the first and third homotopies are disjoint from
their push-offs, and so the only possible contributions to framing
obstructions can occur in the second part of the homotopy, which
we will call $H$.

At the start of $H$ we have guiding arcs and their parallel copies
running between components of $U$. The components of $U$ bound
disjointly embedded 2-disks (whose interiors are disjoint from all
guiding arcs), and the guiding arc pairs determine bands between
these 2-disks. After thickening the whiskers on $U$ to a wedge of
bands joined near the basepoint of $M$, the resulting embedded
surface $F$ (shown in the right-most picture of
Figure~\ref{clasplink-htpy-framed-Big-Grey-fig}) gives a normal
framing on each of the (embedded) loops representing the
$(g_p)_{ij}$.

\begin{figure}
\centerline{\includegraphics[width=135mm]{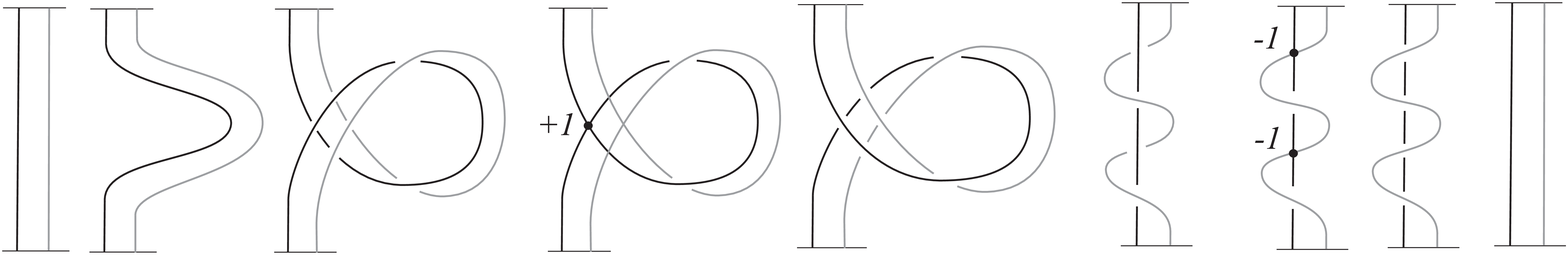}}
         \caption{An \emph{interior twist} on a Whitney disk is the local introduction
         of a self-intersection. Creation of a $\pm 1$-intersection changes the framing by $\mp
         2$, as can be seen in the intersections with a normal push-off of the Whitney disk
         shown in grey. Note that near the self-intersection of the Whitney disk there are a
         pair of intersections between the Whitney disk and its
         push-off which are just artifacts of the immersion of the
         disk bundle and do not contribute to the framing
         obstruction.}
         \label{InteriorTwistPositiveEqualsNegative-fig}
\end{figure}

Now $H$ is a self-homotopy of the union of $U$ together with its
whiskers and the guiding arcs. Clearly $H$ extends to a homotopy
of $F$ (which lies in a neighborhood of $U$ union whiskers and
guiding arcs). We may assume that $H$ restricts to a self-isotopy
on the sub-disk of $F$ which is the union of the disks bounded by
$U$ and the wedge of bands containing the whiskers. At the end of
$H$ the guiding arcs are back where they started, but the parallel
copies might not be, since the twisting of the bands may have
changed (by full twists) during $H$. The framing obstruction on
each Whitney disk in $\cW$ is equal to this change in framing of
the corresponding loop in $F$ at the end of $H$. By the main
results of \cite{Ch}, these relative framings can only differ by
an even integer. So, after perhaps performing interior twists
(which do not contribute to $\tau_1$) as illustrated in
Figure~\ref{InteriorTwistPositiveEqualsNegative-fig}, the Whitney
disks in the above construction of $\cW$ on $H^0_{\psi}$ can
assumed to be framed.
\end{proof}

\section{Examples}\label{sec:examples}
This section computes change-of-base-link formulas for
$\tau_1(L_z,\cdot)$, characterizes twisted stabilizers, and shows
how twisted $\Psi_z$ can lead to non-trivial indeterminacies in
$\tau_1(L_z,\cdot)$.

\subsection{Clasp links}\label{subsec:clasp-link-examples}
For given $z\in\cT_0(\pi_1M)$, a clasp link $L_z$ is determined up
to isotopy by choices at three steps in the construction: the
configuration of endpoints of guiding arcs on the unlink, the
isotopy classes (rel endpoints) of the guiding arcs, and the
(relative) framing of the band corresponding to the thickening of
the guiding arc to a clasp. This subsection illustrates the
computation of the order 1 relative intersection tree
$\tau_1(L_z,L'_z)$ for clasp links $L_z$ and $L'_z$, giving a
``change of base-link'' formula for Theorem~\ref{thm:tau(Lz,L)}.

\subsubsection{Twisting clasps} Figure~\ref{clasp-link-twist-fig}
illustrates the trace $A$ of a homotopy between clasp links $L_z$
and $L'_z$ which differ by one twist of a clasp representing
$g\in\pi_1M$ decorating $(g)_{ji}$ in $z$. The cancelling pair of
intersections of $A$ admit an embedded Whitney disk whose interior
is disjoint from $A$, but which is not framed, as illustrated in
Figure~\ref{clasp-link-unframed-twist-grey-fig}.
Figure~\ref{clasp-link-boundary-twist-grey-fig} shows how an
application of the boundary twist procedure corrects the framing
on the Whitney disk at the cost of creating an interior
intersection between the Whitney disk and $A$, giving
$\tau_1(L_z,L'_z)=(1,1,g)_{iij}$, which is 2-torsion by the AS
relations. In general, twisting such a band $n$ times would give
$\tau_1(L_z,L'_z)=n(1,1,g)_{iij}$.

\begin{figure}
\centerline{\includegraphics[width=115mm]{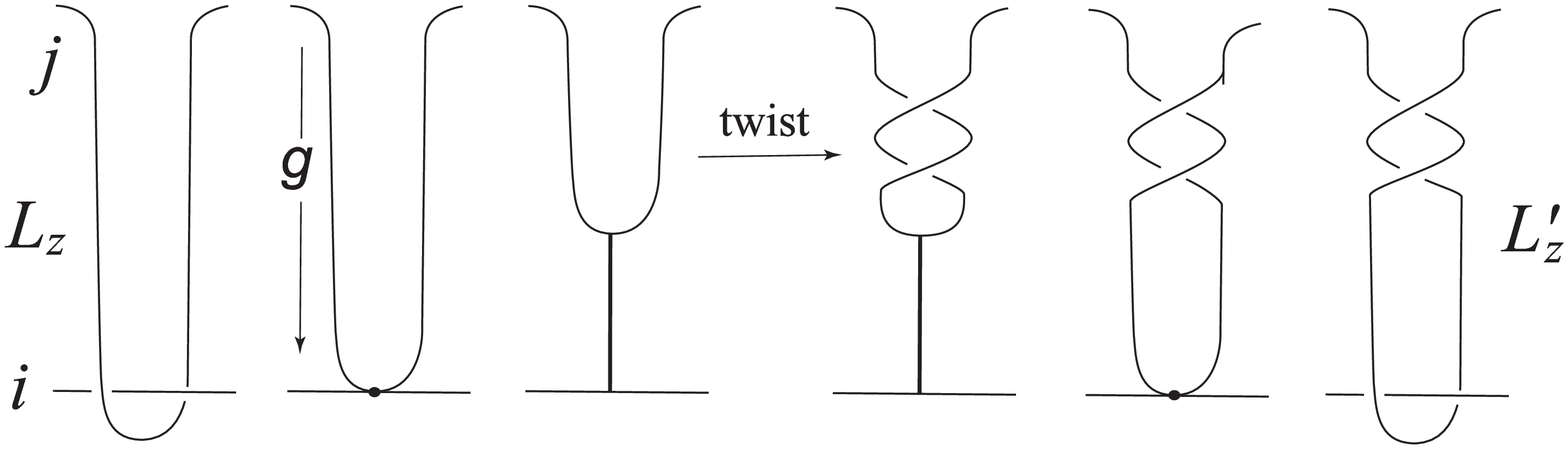}}
         \caption{The trace of a homotopy between clasp links $L_z$ and $L'_z$
         which differ by a single twist in one clasp. The Whitney disk described
          by the vertical arcs is not correctly framed, as shown in Figure~\ref{clasp-link-unframed-twist-grey-fig}.}
         \label{clasp-link-twist-fig}
\end{figure}

\begin{figure}
\centerline{\includegraphics[width=135mm]{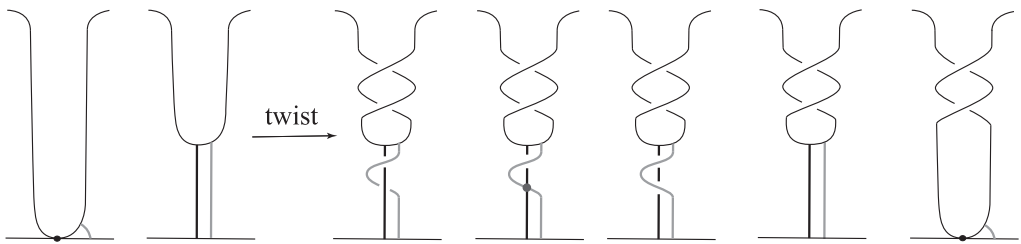}}
         \caption{The grey arcs show an extension of the Whitney section over the interior of the Whitney disk.
         The single intersection between the extension and the Whitney disk indicates that the Whitney disk
         is not framed.}
         \label{clasp-link-unframed-twist-grey-fig}
\end{figure}

\begin{figure}
\centerline{\includegraphics[width=115mm]{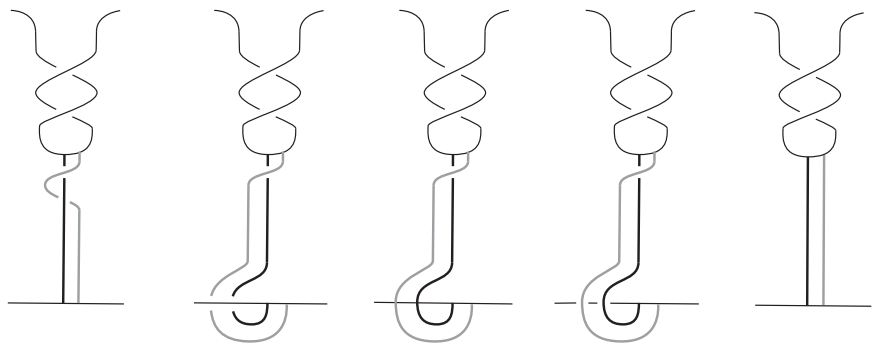}}
         \caption{Changing the Whitney disk of Figures~\ref{clasp-link-twist-fig}
         and \ref{clasp-link-unframed-twist-grey-fig} by the indicated local procedure
         of introducing a boundary twist
         creates a correctly framed Whitney disk at the cost of creating a single
         intersection (center picture)
         between the
         interior of the Whitney disk and the sheet that was twisted around.}
         \label{clasp-link-boundary-twist-grey-fig}
\end{figure}

\begin{figure}
\centerline{\includegraphics[width=115mm]{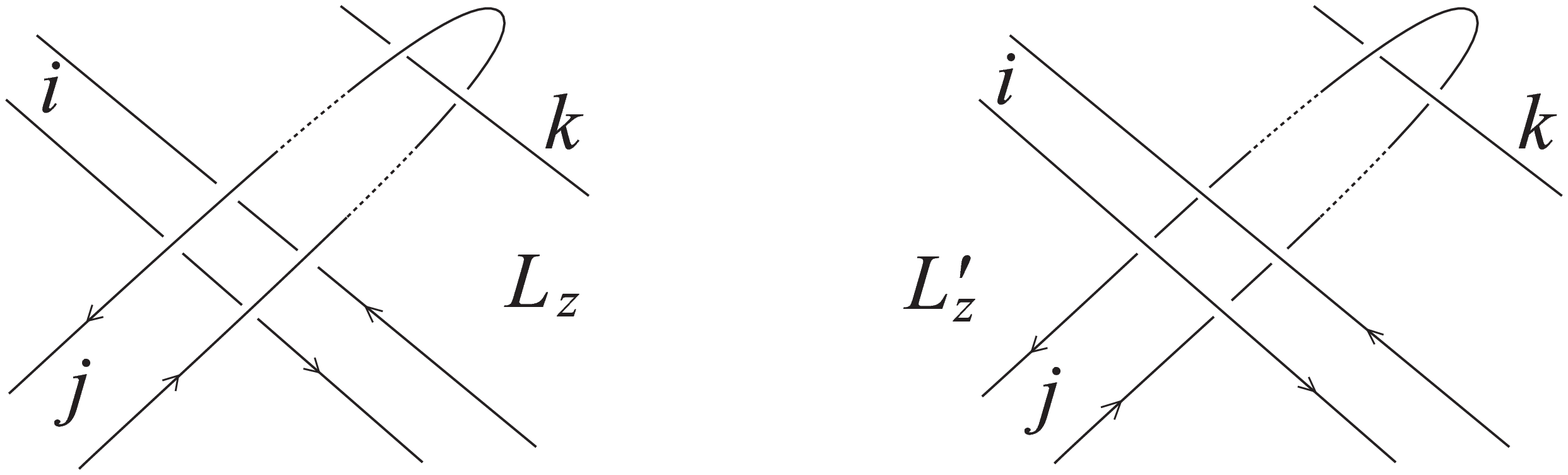}}
         \caption{A homotopy which pushes a clasp of the $i$th component
         across a clasp of the $j$th component gives rise to two parallel
         oppositely oriented Whitney disks (``inside'' the dotted $j$-clasp)
         whose interior intersections with
         the $k$th component will have cancelling trees.}
         \label{clasp-link-band-pass-fig}
\end{figure}

\subsubsection{Choices of guiding arcs} If $L_z$ and $L'_z$ differ
only by a homotopy (rel endpoints) of guiding arcs then
$\tau_1(L_z,L'_z)=0$: This can be seen directly from the ``band
pass'' move illustrated in Figure~\ref{clasp-link-band-pass-fig}.
Or, constructing a homotopy between $L_z$ and $L'_z$ supporting
Whitney disks described by the guiding arcs, the crossings between
guiding arcs correspond to interior intersections between Whitney
disks and do not contribute to $\tau_1$ (as in the proof of
Lemma~\ref{lem:untwisted-htpies}).

\begin{figure}
\centerline{\includegraphics[width=115mm]{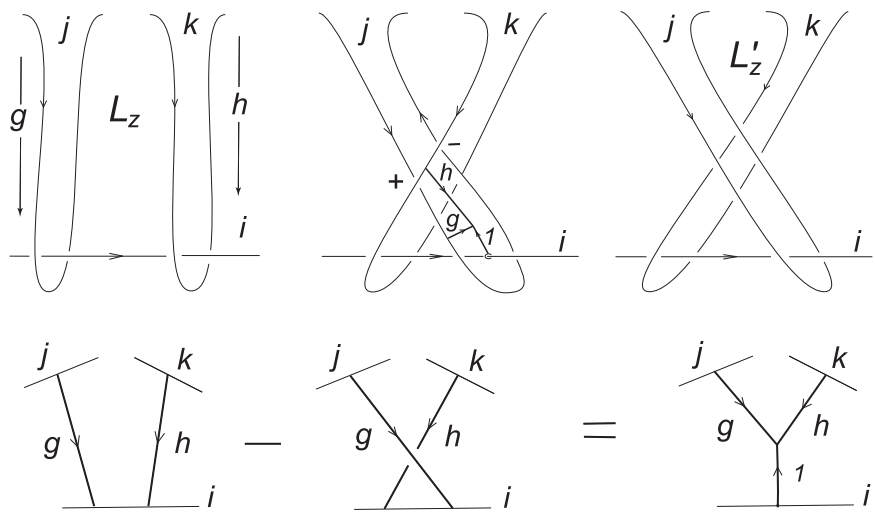}}
         \caption{For clasp links $L_z$ and $L'_z$ differing by the illustrated
         single transposition of guiding arc endpoints, $\tau_1(L_z,L'_z)=(1,g,h)_{ijk}$.}
         \label{STUmove-decorated-fig}
\end{figure}
\subsubsection{Guiding arc endpoint configurations.}
Figure~\ref{STUmove-decorated-fig} shows the computation
$\tau_1(L_z,L'_z)=(1,g,h)_{ijk}$, where $L_z$ and $L'_z$ are
related by a single transposition of guiding arc endpoints along
the $i$th component. The reader familiar with finite type theory
will recognize this as a decorated STU relation.

The transposition of the two endpoints of the \emph{same} guiding
arc is illustrated in Figure~\ref{slide-Wboundary-on-knot-fig},
which computes the corresponding change in $\tau_1$ using the
formula for intersections between Whitney disk boundaries. This
computation will be used in
Example~\ref{subsec:knots-in-B-twisted-S1} below.

\begin{figure}
\centerline{\includegraphics[width=115mm]{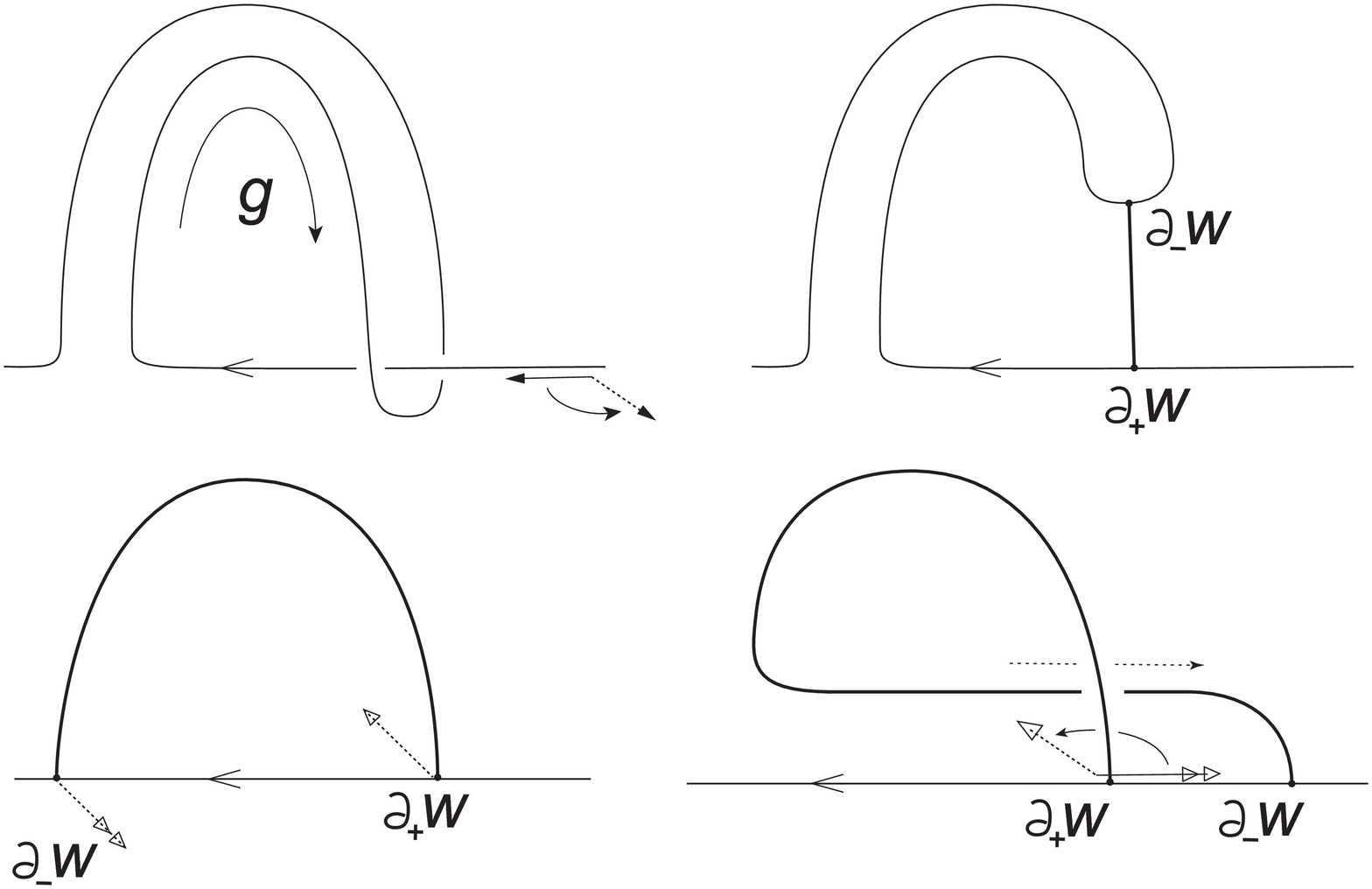}}
         \caption{This figure describes a homotopy $A$ from the clasp knot $K$ shown in
         the upper left to the clasp knot $K'$ which is determined
         by the guiding arc shown in the lower right. The undoing of the positive clasp
         creates a positive self-intersection in $A$, and the redoing of the clasp creates
         a cancelling negative self-intersection. These self-intersections are paired by a framed
         Whitney disk $W$ (described by the trace of the guiding arc) whose
         boundary arcs $\partial_+W$ and
         $\partial_+W$ intersect in a single point $p$ where $\partial_-W$ pushes across $\partial_+W$
         just before the lower right picture as indicated by the horizontal dotted arrow.
         The dotted vectors pointing lower-right point in the direction of increasing time,
         and the dotted vectors pointing upper-left point into the past. The orientation of $A$
         is given by a tangent vector to $K$, together with a second tangent vector pointing into the future.
         The Whitney disk boundary arc $\partial_+W$ is oriented towards the positive
         self-intersection, and the boundary arc $\partial_-W$ is oriented towards the negative
         self-intersection. At $p$ the orientation of $A$ agrees with
         $(\partial_-W,\partial_+W)_p$.
         Using the assignment of trees to intersections between Whitney disk boundaries
         given by the formula $\epsilon_p\cdot t_p:=\epsilon_k \epsilon_j(1,g_k^{\epsilon_k},g_j^{\epsilon_j})$
         in the ``independence of Whitney disk boundaries'' discussion in \ref{subsec:htpy-invariance-of-tau-proof},
         and computing from the figure gives $\tau_1(K,K')=\tau_1(A)=-(g,g^{-1})=+(g^{-1},g)$,
         in the notation $(g,h):=(1,g,h)$.}
         \label{slide-Wboundary-on-knot-fig}
\end{figure}

\subsection{Twisted stabilizers}\label{subsec:twisted stabilizers}
This subsection points out that twisted stabilizers only occur in
the presence of circle bundles over non-orientable surfaces with
orientable total space, or Seifert fibered submanifolds containing
singular fibers, and illustrates how twisted stabilizers can lead
to self-homotopies of knots having non-trivial $\tau_1$.

To present the essential ideas with a minimum of subscripts we
concentrate on the case where $z=\tau_0(K)$ for a knot $K\subset
M$. Identifying $\cT_0(\pi_1M)$ with $\Z[\pi_1M]$ modulo inversion
and trivial elements, and writing $z=\sum e_i g_i$ in normal form
(with all $e_i\neq 0$, $g_i\neq 1$, and with $g_i\neq g_j^{\pm 1}$
for $i\neq j$), any element $\psi$ in the stabilizer $\Psi_z$ of
$z$ satisfies the equation
$$
\sum e_i \psi g_i \psi^{-1} = \sum e_i g_i \quad \quad
\mbox{modulo} \quad g=g^{-1}.
$$
So the conjugation action of $\psi$ permutes the (finite) set
$\{g_i^{\pm1}\}$, and by definition $\psi$ is twisted if this
permutation is non-trivial, and untwisted if this permutation is
the identity.

Assuming that $\psi$ is twisted, there is a (least) natural number
$2\leq n\in\N$ such that $\psi^n$ induces the identity permutation
on $\{g_i^{\pm1}\}$. Writing $\zeta(z):=\cap_i \zeta(g_i)$ for the
intersection of the centralizers $\zeta(g_i)$ in $\pi_1M$ of the
$g_i$, we have $\psi^n\in\zeta(z)$, but $\psi\notin\zeta(z)$. From
work of Jaco-Shalen, Casson-Jungreis, and Gabai, non-cyclic
centralizers of non-trivial elements of $\pi_1M$ are carried by a
codimension zero \emph{characteristic submanifold} of $M$ which is
Seifert fibered (\cite{CJ,G,Ja,JS1,JS2}). Since $n$th roots of any
element are contained in the centralizer of that element, it can
be checked that both $\zeta(z)$ and $\zeta(\psi^n)$ are carried by
the same connected component $N$ of the characteristic submanifold
of $M$.

It follows from section 4 of \cite{JS2} (in particular
Theorem~4.4) that there are only two possible ways for
$\psi^n\in\zeta(z)$, but $\psi\notin\zeta(z)$: Either $\psi$ and
$\{g_i\}$ are all carried by a submanifold $B\widetilde{\times}
S^1 \subset N$ which is the orientable circle bundle over the
M\"{o}bius band, or $\psi$ is carried by a neighborhood of
singular fiber in $N$ and $\{g_i\}$ lie in the canonical subgroup
of $\pi_1N<\pi_1M$ consisting of elements represented by loops
which project to orientation-preserving loops in a base surface
for $N$. In both cases the possible $z$ can be described
explicitly as follows.

\subsubsection{The twisted circle bundle over the M\"{o}bius
band}\label{subsubsec:orientable-S1-over-Mob} In the first case,
we use the notation
$$
\pi_1(B\widetilde{\times}S^1)= \langle \, a,f \, | \,\,
afa^{-1}=f^{-1}\, \rangle<\pi_1M
$$
with $f$ represented by a circle fiber, $a$ represented by a core of
the M\"{o}bius band $B$, and normal form $a^nf^l$ for $n,l\in\Z$. It
is easily checked that the only candidates to be twisted stabilizers
are $\psi=a^{2r+1}f^s$, with $\psi^2\in\langle a^{2k}\rangle$
central. For all $r,s\in\Z$, the orbits under conjugation by
$a^{2r+1}f^s$ are:
$$
\{a^{2k}\}\quad\{f^l,f^{-l}\}\quad\{a^{2k}f^l,a^{2k}f^{-l}\}
$$
for all nonzero integers $k$ and $l$; and
$$
\{a^{2k+1}f^l,a^{2k+1}f^{2s-l}\}\quad\{a^{2k+1},a^{2k+1}f^{2s}\}\quad\{a^{2k+1}f^s\}
$$
for all integers $k$ and $l$ with $l\neq s$. So in this case
$\Psi_z$ is twisted exactly when $z$ is of the form:
\begin{eqnarray*}\label{eqn1}
\lefteqn{ \sum_{l\neq 0, k\neq
0}e_{kl}(a^{2k}f^l+a^{2k}f^{-l})+e_l f^l+e'_k a^{2k}}\\
& & + \sum_{l\neq
s}d_{kl}(a^{2k+1}f^l+a^{2k+1}f^{2s-l})+d_k(a^{2k+1}+a^{2k+1}f^{2s})+d'_k
a^{2k+1}f^s,
\end{eqnarray*}
where the coefficients $e_{kl},e_l,e'_k,d_{kl},d_k,d'_k$ are
integers, with some coefficient other than the $e'_k$ non-zero.

We will show below in \ref{subsec:knots-in-B-twisted-S1} that
there is a self-homotopy $A$ of a knot $K$ having a latitude
determining $a\in\Psi_z$ with $\tau_0(K)=z=f+f^3$ such that
$\tau_1(A)\neq 0\in\cT_1(\pi_1M)/\mbox{INT}(z)$. This illustrates
how Lemma~\ref{lem:untwisted-htpies} (hence the proof of
Theorem~\ref{thm:tau(Lz,L)}) can fail if $\Psi_z$ is twisted.

On the other hand, related examples will show that for $K$ with
$\tau_0(K)=z=e_l f^l$, all self-homotopies $A$ of $K$ have
$\tau_1(A)=0\in\cT_1(\pi_1M)/\mbox{INT}(z)$. Thus, a twisted
$\Psi_z$ does not guarantee the failure of
Lemma~\ref{lem:untwisted-htpies}, and so
Theorem~\ref{thm:tau(Lz,L)} can be extended to allow some twisted
stabilizers.

\subsubsection{Singular Seifert
fibers}\label{subsubsec:singular-fibers-twisting} In the case
where $\psi$ is carried by a neighborhood of singular fiber in $N$
and $\{g_i\}$ lie in the canonical subgroup of $\pi_1N<\pi_1M$, we
have $\psi=\phi^r f^s$ for $r,s\in\Z$ where $f$ is represented by
a regular fiber of $N$ and $\phi$ is represented by a singular
fiber of $N$. In this case $n\in\N$ is the smallest power such
that $\phi^{rn}$ is contained in the cyclic (normal) subgroup
$\langle f \rangle$ of $\pi_1N$. We get fixed $z$ of the form:
$$
\sum e_k(g_k+\phi^r g_k\phi^{-r}+\phi^{2r}
g_k\phi^{-2r}+\cdots+\phi^{(n-1)r}
g_k\phi^{-(n-1)r})+e_{pq}\phi^pf^q
$$
where the $\phi^pf^q$ lie in $\zeta(\phi^r f^s)$, so at least one
$e_k$ must be non-zero.

\subsubsection{Twisted $\Psi_z$ for
links}\label{subsubsec:twisted-Psi-z-for-links} Now let $L$ be a
link with $\tau_0(L)=z=\sum_{i\leq j} z_{ij}$, with each $z_{ij}$
written in normal form:
$$
z_{ij}=\sum_p e_p (g_p)_{ij}.
$$
A twisted $\psi\in\Psi_z$ induces a non-trivial permutation of at
least one of the sets $\{g_p^{\pm 1}\}$ for $i=j$ or $\{g_p\}$ for
$i<j$.

In the cases $i=j$ the above discussion for knots applies to
describe restrictions on $z_{ii}$ and $\psi_i$ for twisted
$\psi\in\Psi_z$. (Of course these restrictions are not independent
of each other in the presence of non-zero $z_{ij}$ with $i<j$.)

Considering the case where $i<j$ and $\psi$ induces a non-trivial
permutation of $\{g_p\}$ (twisting $z_{ij}$), there exists $n\geq
2$ such that $\psi_i^n g_p \psi_j^{-n}=g_p$ for all $g_p$. Thus
$g_p^{-1}\psi_i^n g_p=\psi_j^n$, and $g_p\psi_j^n
g_p^{-1}=\psi_i^n$ for all $g_p$. It follows from Proposition~4.5
of \cite{JS2} that $\psi_i$ and $\psi_j$ are carried by singular
fibers in a characteristic component $N\subset M$.

A more detailed analysis of twisted stabilizers would be
interesting, but we stop here and conclude:
\begin{prop}\label{prop:twisted-stabilizers}
If $M$ contains no orientable circle bundles over non-orientable
surfaces and no Seifert fibered submanifolds having singular
fibers, then $\Psi_z$ is untwisted for all $z\in\cT_0(\pi_1M)$.
\end{prop}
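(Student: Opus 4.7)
The plan is to argue by contrapositive: assume some $z \in \cT_0(\pi_1M)$ admits a twisted $\psi = (\psi_1,\ldots,\psi_m) \in \Psi_z$, and produce inside $M$ either an orientable circle bundle over a non-orientable surface or a Seifert fibered submanifold with a singular fiber. First I would dispose of the off-diagonal case already handled in~\ref{subsubsec:twisted-Psi-z-for-links}: if $\psi$ induces a non-trivial permutation of some $z_{ij}$ with $i<j$, then taking $n\geq 2$ least with $\psi_i^n g_p \psi_j^{-n} = g_p$ yields $g_p \psi_j^n g_p^{-1} = \psi_i^n$ and $g_p^{-1}\psi_i^n g_p = \psi_j^n$ for every $g_p$, so by Proposition~4.5 of~\cite{JS2} the elements $\psi_i$ and $\psi_j$ are represented by loops running along singular fibers in a Seifert fibered component $N$ of the characteristic submanifold of $M$, giving the contradiction in this subcase. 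Hence I may assume that the twisting occurs only on a diagonal piece $z_{ii}$, and the problem reduces to a knot-type analysis with a single element $\psi\in\pi_1M$ and a normal form $z=\sum e_i g_i$ on which conjugation by $\psi$ permutes $\{g_i^{\pm 1}\}$ non-trivially.

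Next I would exploit centralizers. Let $n\geq 2$ be the smallest integer such that $\psi^n$ fixes each $g_i^{\pm 1}$; then $\psi^n \in \zeta(z):=\bigcap_i \zeta(g_i)$, while $\psi \notin \zeta(z)$. In particular $\zeta(\psi^n)$ contains both $\psi$ and every $g_i$. This subgroup cannot be cyclic: if it were, $\psi$ and all $g_i$ would commute and the induced permutation would be trivial, contradicting twistedness. Applying the Jaco--Shalen, Casson--Jungreis, and Gabai theorem (\cite{CJ,G,JS1,JS2}) to $\psi^n$, a non-trivial element with non-cyclic centralizer in a $3$-manifold group is carried by a Seifert fibered component $N$ of the characteristic submanifold. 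Since $\psi$ is an $n$-th root of $\psi^n$ it lies in $\zeta(\psi^n)\subset\pi_1N$, and each $g_i\in\zeta(\psi^n)\subset \pi_1N$ as well, so the entire configuration $\{\psi,\,g_1,\ldots\}$ is realized inside a single Seifert fibered piece $N\subset M$.

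The final step is to invoke the structural dichotomy of Theorem~4.4 of~\cite{JS2} inside $N$. The condition $\psi^n \in \zeta(z)$ and $\psi\notin\zeta(z)$ says precisely that conjugation by $\psi$ has order $n\geq 2$ on the finite set $\{g_i^{\pm 1}\}\subset\pi_1N$, while the $n$-th power acts trivially. The theorem then forces one of two standard configurations inside the Seifert fibered piece: either $\psi$ together with every $g_i$ lies in a submanifold $B\widetilde{\times}S^1\subset N$ which is the orientable circle bundle over a M\"obius band $B$, or $\psi$ sits in a regular neighborhood of a singular fiber of $N$ and the $g_i$ lie in the canonical orientation-preserving subgroup of $\pi_1N$. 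The explicit orbit computations in~\ref{subsubsec:orientable-S1-over-Mob} and~\ref{subsubsec:singular-fibers-twisting} confirm that these are exactly the group-theoretic mechanisms producing a non-trivial permutation of $\{g_i^{\pm 1}\}$ by conjugation. Either configuration is excluded by the hypothesis, giving the desired contradiction.

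The main obstacle is to correctly route everything into a single Seifert fibered component $N$ and then to verify that the two listed configurations exhaust the possibilities; once the Jaco--Shalen--Casson--Jungreis--Gabai machinery and Theorem~4.4 of~\cite{JS2} are invoked, the rest is essentially algebraic bookkeeping on centralizers and on conjugation orbits within the well-understood fundamental groups of orientable circle bundles over M\"obius bands and of neighborhoods of singular fibers, a computation already carried out in~\ref{subsubsec:orientable-S1-over-Mob} and~\ref{subsubsec:singular-fibers-twisting}.
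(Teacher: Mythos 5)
Your argument is correct and follows essentially the same route as the paper: the proposition is the summary of the discussion in \ref{subsec:twisted stabilizers}, which likewise reduces the off-diagonal case to singular fibers via Proposition~4.5 of \cite{JS2}, and handles the diagonal case by observing $\psi^n\in\zeta(z)$ but $\psi\notin\zeta(z)$, invoking the Jaco--Shalen/Casson--Jungreis/Gabai result to place everything in a Seifert fibered characteristic piece, and then using Theorem~4.4 of \cite{JS2} to obtain the M\"obius-band circle bundle versus singular fiber dichotomy. Your rephrasing as a contrapositive and your direct use of the non-cyclicity of $\zeta(\psi^n)$ are only cosmetic differences from the paper's presentation.
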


By direct computation one can extend
Lemma~\ref{lem:untwisted-htpies} (hence
Theorem~\ref{thm:tau(Lz,L)}) to certain simple examples of $z$
with twisted $\psi\in\Psi_z$ carried by a singular Seifert fiber,
and it is possible that twisted $\Psi_z$ which correspond only to
singular Seifert fibers do not contribute any non-trivial
indeterminacies to $\tau_1(L_z,\cdot)$, but I do not know of a
general construction. The next example shows that
Lemma~\ref{lem:untwisted-htpies} can indeed fail for twisted
$\Psi_z$ which correspond to orientable circle bundles over
non-orientable surfaces.

\subsection{Knots in the twisted circle bundle over a M\"{o}bius
band.}\label{subsec:knots-in-B-twisted-S1} Using the notation of
subsection~\ref{subsubsec:orientable-S1-over-Mob} above, this
example will consider knots $K$ in $B\widetilde{\times}S^1$ with
$\tau_0(K)=z=\sum e_l f^l$ carried by the circle factor. Such $z$
has twisted $\Psi_z$, with the conjugation action of $a$ (carried
by the M\"{o}bius band core) inverting the $f^l$. After giving an
example of a self-homotopy $A$ of such a $K$ with non-trivial
$\tau_1(A)$ (so that Lemma~\ref{lem:untwisted-htpies} fails), we
will observe that in some cases Lemma~\ref{lem:untwisted-htpies}
can be extended to $K$ with twisted $\Psi_z$.

Throughout this subsection $(g,h):=(1,g,h)$ with the common
univalent labels suppressed.

Since $B\widetilde{\times}S^1$ is irreducible, the $\Phi(z)$
relations are trivial throughout this subsection.

\subsubsection{A self-homotopy with non-trivial
$\tau_1$}\label{subsubsec:non-trivial-self-htpy} Illustrated in
Figure~\ref{twisted-f1-f3-inMobcirclebundle-fig}, using the
unknot-plus-guiding-arcs notation, is a self-homotopy $A$ of a
clasp knot $K$ in $B\widetilde{\times}S^1$, with
$\tau_0(K)=z=f+f^3$. The right-back and left-front faces of the
rectangular prism are identified by a half-rotation, and the top
and bottom faces are identified by a translation, forming
$B\widetilde{\times}S^1$. The clasp knot $K$ is described by the
left-most rectangular unknot $U$ together with two guiding arcs,
one which wraps once around the circle fiber and a longer one
which wraps around three times. The self-homotopy $A$ pushes $K$
along the length of the M\"{o}bius band, so $A$ has a latitude
representing the element $a\in\pi_1 B\widetilde{\times}S^1$.
During the description of $A$, the guiding arcs can be thought of
either as representing the `bands' of $K$, or as tracing out a
pair of Whitney disks in an order 1 Whitney tower on $A$.

\begin{figure}
\centerline{\includegraphics[width=135mm]{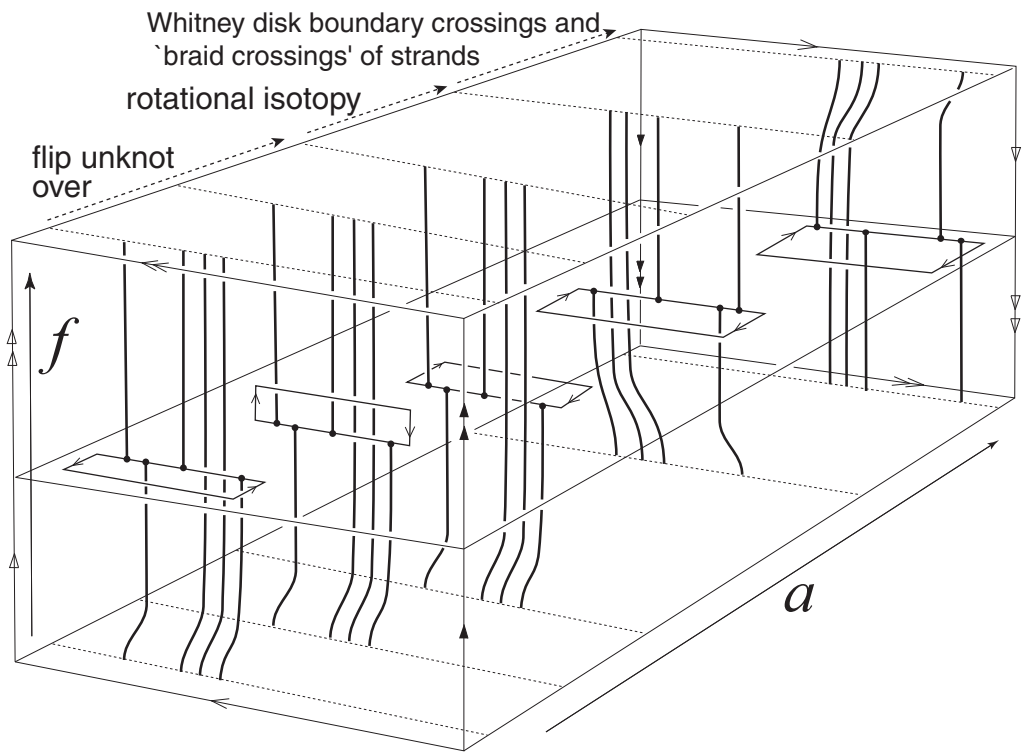}}
         \caption{From left to right are pictured five stages of the self-homotopy $A$ of
         $K$ in the twisted circle bundle $B\widetilde{\times}S^1$ over the M\"{o}bius band.
         During the first three stages, the unknot $U$ flips over while fixing its edge
         that contains the endpoints of the guiding arcs. Between the second and third stages
         $U$ intersects each guiding arc near one endpoint -- contributing
         $(f,f)+(f^3,f^3)$ to $\tau_1(A)$ -- and intersects two strands of the
         longer guiding arc in cancelling pairs of intersections which contribute nothing to $\tau_1(A)$.
         From the third stage to the fourth stage is just an ambient isotopy of $U$ together with the guiding arcs
         which is rotation by $180^\circ$ in the horizontal plane.
         From the fourth stage to the fifth stage each guiding arc
         pushes one of its endpoints across the other -- contributing $(f^{-1},f)+(f^{-3},f^3)$
         to $\tau_1(A)$, as illustrated in Figure~\ref{slide-Wboundary-on-knot-fig} --
         and the vertical strands of the longer guiding arc
         pass through each other contributing nothing to $\tau_1(A)$. From the fifth stage back to the
         first stage is just an isotopy across the back face and in from the front face. Thus
         $\tau_1(A)=(f,f)+(f^{-1},f)+(f^3,f^3)+(f^{-3},f^3)$, which will be seen to be non-zero
         in $\cT_1(\pi_1B\widetilde{\times}S^1)/\mbox{INT}(z)$ (even modulo 2, in case you don't want to check signs).}
         \label{twisted-f1-f3-inMobcirclebundle-fig}
\end{figure}

As explained in the caption of
Figure~\ref{twisted-f1-f3-inMobcirclebundle-fig},
$\tau_1(A)=(f,f)+(f^{-1},f)+(f^3,f^3)+(f^{-3},f^3)$, which we will
show is non-zero in
$\cT_1(\pi_1B\widetilde{\times}S^1)/\mbox{INT}(z)$.

Noting that the edge decorations in $\tau_1(A)$ are carried by
circle fibers, the relevant INT$(z)$ relations for $z=f+f^3$ are:
$$
0=(z+\overline{z},f^r)=(f,f^r)+(f^{-1},f^r)+(f^3,f^r)+(f^{-3},f^r)
\quad\quad \mbox{for} \,\, r\in\Z.
$$
Denoting these relations by $\Delta_r$ and writing them in normal
form $(f^n,f^m)$ with $0 < n < m$ or $0 =n\leq m$, we have
$$
\Delta_r=(f,f^r)-(f,f^{r+1})+(f^3,f^r)-(f^3,f^{r+3})\quad\quad
\mbox{for} \,\, 3<r
$$
$$
\Delta_{-r}=(f^r,f^{r+1})-(f^{r-1},f^r)+(f^r,f^{r+3})-(f^{r-3},f^r)\quad\quad
\mbox{for} \,\, -r\leq -3
$$
$$
\Delta_3=(f,f^3)-(f,f^4)+(1,f^3)-(f^3,f^6)
$$
$$
\Delta_2=(f,f^2)-(f,f^3)-(f^2,f^3)-(f^3,f^5)
$$
$$
\Delta_1=(1,f)-(f,f^2)-(f,f^3)-(f^3,f^4)
$$
$$
\Delta_{-1}=(f,f^2)+(1,f)+(1,f^4)+(f^2,f^3)
$$
$$
\Delta_{-2}=(f^2,f^3)-(f,f^2)+(f^2,f^5)+(f,f^3)
$$
with $\Delta_0$ already zero by the AS and FR relations.

We will show that $\tau_1(A)$ is not equal to a finite sum $\sum
d_i \Delta_i$ in $\cT_1(\pi_1B\widetilde{\times}S^1)$:

In normal form we have
$\tau_1(A)=(1,f)-(f,f^2)+(1,f^3)-(f^3,f^6)$. Assume that
$(1,f)-(f,f^2)+(1,f^3)-(f^3,f^6)=\sum d_i \Delta_i$. Note that for
all $5\leq r$ the coefficient of $(f,f^r)$ must vanish, so
$d_{r-1}+d_r = 0$. Thus, if $d_4$ is odd, then $\sum d_i \Delta_i$
can not be a finite sum. Similarly, for all $5\leq r$ the
coefficient of $(f^{r-1},f^r)$ must vanish, so $d_{-(r-1)}+d_{-r}=
0$. Thus, if $d_{-4}$ is odd, then $\sum d_i \Delta_i$ can not be
a finite sum.

Working modulo 2, the first three terms in $\tau_1(A)$ give the
following relations among the coefficients:
$$
\begin{array}{ccccccc}
  (1,f): & \quad & d_1& + & d_{-1} & \equiv & 1 \\
  (f,f^2): & \quad & d_2 & + & d_{-2} & \equiv & 0 \\
  (1,f^3): & \quad & d_3 & + & d_{-3} & \equiv & 1.
\end{array}
$$
The vanishing of the coefficient of $(f,f^3)$ gives:
$$
d_1+d_2+d_{-2}+d_3\equiv 0
$$
which implies that $d_1\equiv d_3$, since $d_2 + d_{-2}  \equiv
0$. Now the vanishing of the coefficient of $(f^3,f^4)$, together
with the congruence $d_1\equiv d_3$, give:
$$
0 \equiv d_1+d_{-3}+d_4+d_{-4}\equiv d_3+d_{-3}+d_4+d_{-4} \equiv
1 +d_4+d_{-4},
$$
so one of $d_4$ or $d_{-4}$ is odd, and $\sum d_i \Delta_i$ can
not be a finite sum.

\subsubsection{Extending Lemma~\ref{lem:untwisted-htpies} to
some twisted
stabilizers}\label{subsubsec:extending-lemma-to-twisted-case} The
construction of the self-homotopy $A$ in
Figure~\ref{twisted-f1-f3-inMobcirclebundle-fig} generalizes to
clasp knots $K$ with $\tau_0(K)=\sum e_l f^l$, yielding
self-homotopies $A$ with latitudes representing $a$, and
$$
\tau_1(A)=\sum e_l[(f^l,f^l)+(f^{-l},f^l)].
$$
In the case where $\tau_0(K)=z= e_l f^l$, it is clear that
$\tau_1(A)=e_l[(f^l,f^l)+(f^{-l},f^l)]\in\mbox{INT}(z)$ by taking
$r=l$ in the INT relations, and Lemma~\ref{lem:untwisted-htpies}
can be extended to such $K$. (In the case $e_l=1$ so $K$ has just
a single clasp then $A$ can be taken to be a self-isotopy of $K$.)
Notice that composing $A$ with itself in the general case gives a
self-homotopy $A+A$ of $K$ with a latitude representing $a^2$
which is untwisted, and the computation
$$
\begin{array}{rcl}
  \tau_1(A+A) & = & \tau_1(A)+a\cdot\tau_1(A)   \\
   & = & \sum
e_l[(f^l,f^l)+(f^{-l},f^l)+(af^la^{-1},af^la^{-1})+(af^{-l}a^{-1},af^la^{-1})]  \\
  & = & \sum
e_l[(f^l,f^l)+(f^{-l},f^l)+(f^{-l},f^{-l})+(f^l,f^{-l})] \\
& = & 0\in\cT_1(\pi_1B\widetilde{\times}S^1)
\end{array}
$$
confirms Lemma~\ref{lem:untwisted-htpies} (using
$(f^{-l},f^{-l})+(f^l,f^l)=0$ and $(f^l,f^{-l})+(f^{-l},f^l)=0$,
by HOL, FR and AS relations).

General techniques for computing in the infinitely generated
groups $\cT_1(\pi_1 B\widetilde{\times}S^1)/\mbox{INT}(z)$ would
be useful.


\section{Essential knots}\label{sec:essential-knots}

The goal of this section is Theorem~\ref{thm:essential-tau(Lz,L)}
(subsection~\ref{subsubsec:essential-order1-rel-selflinking}) which,
together with Theorem~\ref{thm:tau(Lz,L)} in the introduction,
classifies stable knot concordance in products $F\times S^1$ of an
orientable surface $F\neq S^2$ with the circle. Most of the work
involves characterizing new INT relations which arise when defining
$\tau_1$ for singular concordances of knots which are not
null-homotopic.

Throughout this section we continue to assume that $M$ is an
oriented 3-manifold, with $\pi_1M$ torsion-free, and add the
further assumption that $M$ is irreducible.  We restrict our
attention to knots in such $M$, eventually focusing on the case
where $M=F\times S^1$.

Consider an \emph{essential} knot $K\subset M$ with
$[K]=\gamma\neq 1$ generating a cyclic subgroup $\langle
\gamma\rangle<\pi_1M$, for some choice of whisker on $K$. For any
singular concordance $A\looparrowright M\times I$ of $K$ which
supports an order 1 Whitney tower $\cW$, we want to define
$\tau_1(A)$ as before by associating decorated Y-trees to the
intersections between $A$ and the interiors of the Whitney disks
in $\cW$. Defining $\tau_1$ for such an \emph{essential} annulus
will require two modifications of the previous definition for
inessential annuli: The edge decorations will now be cosets by
$\langle \gamma\rangle$, to account for choices of paths in $A$;
and a reformulation of the INT($A$) relations will account for new
indeterminacies in the choices of Whitney disk boundaries. The
relevant INT($A$) relations will turn out to be determined by
order zero intersections between singular self-concordances which
have latitudes representing generators of the centralizer
$\zeta(\gamma)$ of $\gamma$ in $\pi_1M$.

\subsection{Order zero double coset decorations}\label{subsec:order-zero-coset-decorations}
We start by discussing the order zero intersection invariants that
will be used to define the new INT relations, and index the
equivalence classes of singular concordances which support an
order 1 Whitney tower.

Define the abelian group
$\cT_0(\langle\gamma\rangle\backslash\pi_1M/\langle\gamma\rangle)$
in the same way as $\cT_0(\pi_1M)$ except that the order zero
trees are decorated by (representatives of) double cosets of
$\pi_1M$ by $\langle\gamma\rangle$.

In defining an order zero intersection invariant for annuli, we
will only be concerned with two separate cases: either all
univalent vertices will have the same label; or the pairs of
univalent vertices on all edges (order zero trees) will be labeled
distinctly from a set of two labels. So in the latter case
$\cT_0(\langle\gamma\rangle\backslash\pi_1M/\langle\gamma\rangle)$
can be identified with the $\Z$-span of the double coset space,
and in the first case one takes a further quotient by inversion of
coset representatives and by killing the double coset represented
by the trivial element $1\in\pi_1M$.

If $A$ and $A'$ are singular concordances of knots in $M$
representing $\gamma$, then sheet-changing paths through the
intersections between $A$ and $A'$ determine double cosets, and we
define the \emph{order zero non-repeating intersection tree}
$$
\lambda_0(A,A')
\in\cT_0(\langle\gamma\rangle\backslash\pi_1M/\langle\gamma\rangle)
$$
by summing as usual over the signed intersections between $A$ and
$A'$, and ignoring any self-intersections. $\lambda_0(A,A')$ is
invariant under homotopy (rel $\partial$) and is the complete
obstruction to pairing $A\cap A'$ by Whitney disks (an order 1
\emph{non-repeating Whitney tower}).

Similarly, the \emph{order zero self-intersection tree}
$$
\tau_0(A)
\in\cT_0(\langle\gamma\rangle\backslash\pi_1M/\langle\gamma\rangle)
$$
is defined by summing over all signed self-intersections of $A$,
is invariant under homotopy (rel $\partial$), and is the complete
obstruction to the existence of an order 1 Whitney tower on $A$.

\subsection{Order 1 coset
decorations}\label{subsec:order-1-coset-decorations}

Define the abelian group
$\cT_1(\langle\gamma\rangle\backslash\pi_1M)$ in the same way as
$\cT_1(\pi_1M)$ except that the edges of the Y-tree generators are
decorated by (representatives of) left cosets
$\langle\gamma\rangle\backslash\pi_1M$ -- here we are assuming
that \emph{all edges in our Y-trees are oriented towards the
trivalent vertex}. In this section we only consider knots, so all
univalent vertices in
$\cT_1(\langle\gamma\rangle\backslash\pi_1M)$ have the same label
which is suppressed from notation.

$\cT_1(\langle\gamma\rangle\backslash\pi_1M)$ is the quotient of
$\cT_1(\pi_1M)$ by the relations:
$$
(a,b,c)=(\gamma^q a, \gamma^r b, \gamma^s c) \quad\quad
\mbox{for}\quad q,r,s \in\Z.
$$

For an order 1 Whitney tower $\cW$ on a singular concordance $A$
of $K$ as above, it is clear that these relations are exactly what
is needed to account for choices of sheet-changing paths from $A$
into the Whitney disks $\cW$ when defining $\tau_1(A)$.

\subsection{INT relations for essential
knots}\label{subsec:INT-for-essential-knots} Fixing the boundary
and changing the interior of any Whitney disk in $\cW$ does not
change $\tau_1(\cW)$ since we are assuming that $M$ is
irreducible. Also, by the same argument as in
subsection~\ref{subsec:htpy-invariance-of-tau-proof}, changing the
boundaries of the Whitney disks in $\cW$ by a regular homotopy in
$A$ does not change $\tau_1(\cW)$. However, changing the homotopy
class in $A$ of Whitney disk boundaries can change $\tau_1(\cW)$
as we discuss next.

Consider a Whitney disk $W$ in $\cW$. Changing $\partial W$ by
winding one arc $m$ times around $A$ and the other arc $n$ times
around $A$ creates a closed curve in $A$ representing
$\gamma^m\beta\gamma^n\beta^{-1}\in\pi_1M$, where $\beta$ is
determined by double-point loops through points paired by $W$, and
the signs of the integers $m$ and $n$ correspond to the direction of
winding relative to $\gamma$. This closed curve bounds a Whitney
disk exactly when $\gamma^m\beta\gamma^n\beta^{-1}=1$. Solutions to
such Baumslag-Solitar equations are highly restricted in 3-manifold
groups, for instance if $M$ is sufficiently large then $m=\pm n$ by
Jaco (\cite{Ja}). We make the further assumption that if
$\gamma^m\beta\gamma^n\beta^{-1}=1$, then $\gamma$ and $\beta$
commute, as will be the case for all $\gamma$ when $M=F\times S^1$.
\begin{figure}
\centerline{\includegraphics[width=130mm]{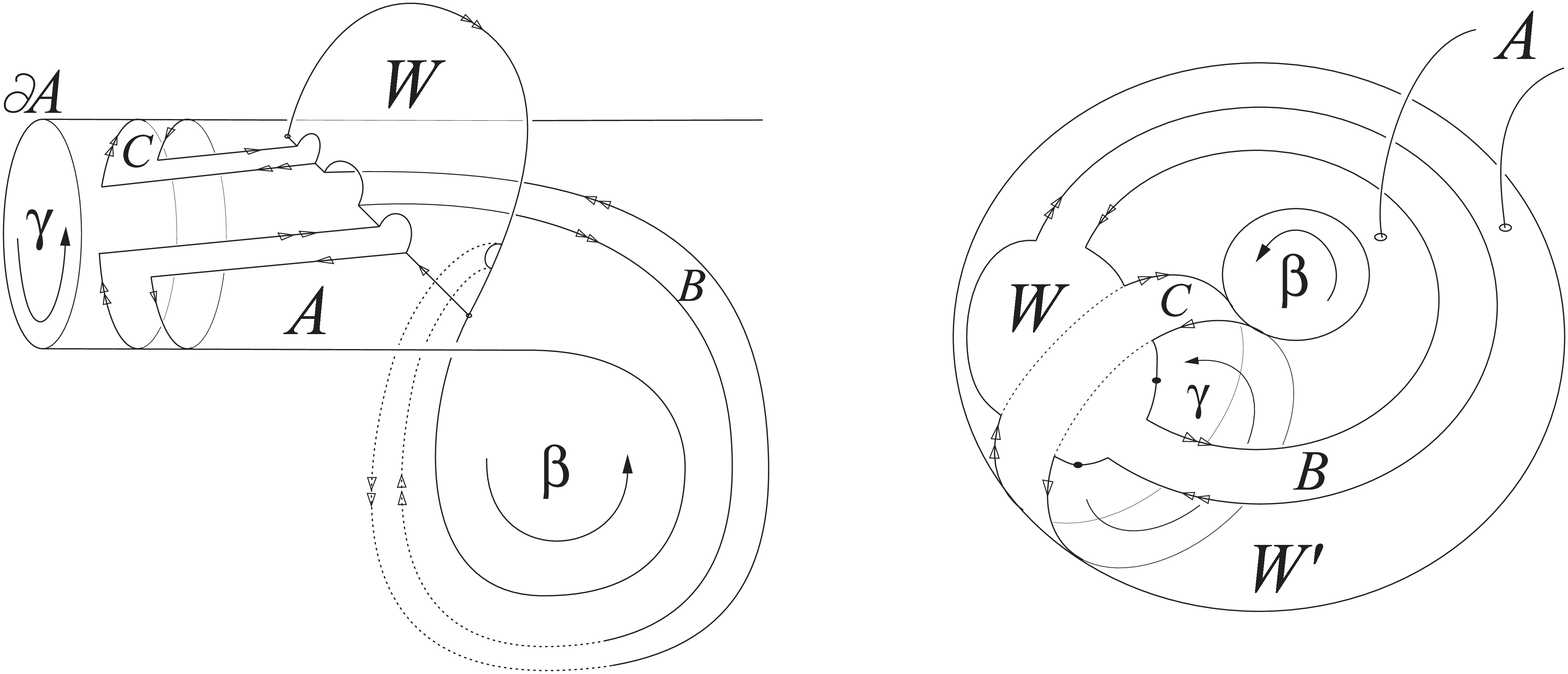}}
         \caption{}
         \label{Essential-W-disk-boundary-change-fig}
\end{figure}

Assuming that $\gamma^m\beta\gamma^n\beta^{-1}=1$ implies
$\gamma\beta\gamma^{-1}\beta^{-1}=1$, it is sufficient to compute
the change in $\tau_1(\cW)$ due to changing a Whitney disk
boundary $\partial W$ by winding one arc around $A$ in the
positive $\gamma$ direction and the other arc around $A$ in the
direction of $\gamma^{-1}$, and replacing $W$ by a Whitney disk
$W'$ with this new boundary. Such a change is illustrated in the
left hand side of
Figure~\ref{Essential-W-disk-boundary-change-fig}, which also
indicates how two thin bands (half-tubes) $B$ and $C$ can be added
to $W$ so that $W\cup B\cup C\cup W'$ forms a torus $T$ (here the
orientation of $W'$ is reversed). Since $A$ can be assumed to miss
$B$ and $C$, the order zero intersections between $A$ and $T$
measure the change in $\tau_1$ as follows.

Take a whisker on $T$ to be a common whisker for $W$ and $W'$
where their boundaries meet. As in the previous INT relations, the
two descending edges on Y-trees for $W$ and $W'$ are decorated by
the same pair of fundamental group elements (now coset
representatives), which here can be normalized to $1$ and $\beta$
by choosing the whisker on $T$ appropriately. The change in
$\tau_1$ is determined by sheet-changing paths from $A$ into $T$
which represent cosets decorating the third edges of the Y-trees.
Note that allowing these paths to cross $B$ is irrelevant since we
are working modulo multiplication by $\gamma$, but we can not
assume to allow paths in $T$ to cross $C$ which would correspond
to multiplication by $\beta$. So the change in $\tau_1$ is truly
measured by order zero intersections between $A$ and the annulus
which is gotten from $T$ essentially by cutting out $C$, as
described next.

The band $C$ can be extended to an annulus $\overline{C}$ by
adding a small embedded band in $W$, as indicated by the dotted
lines in the right-hand side of
Figure~\ref{Essential-W-disk-boundary-change-fig}. Remove
$\overline{C}$ from $T$ and extend the resulting boundary
components by disjointly embedded collars to parallel copies of
$K$ in $M\times\{0\}$ to get an annulus $A'$ which has the same
order zero intersections with $A$ as $T$. Now the change in
$\tau_1$ can be expressed as $(1,\beta,\lambda_0(A,A'))$, where
$\lambda_0$ is the order zero non-repeating intersection tree of
subsection~\ref{subsec:order-zero-coset-decorations} above. Since
$A$ has only cancelling pairs of self-intersections, we can add a
parallel copy of $A$ to $A'$ along one boundary component to get a
singular concordance $A''$ of a parallel copy of $K$ such that
$\lambda_0(A,A'')=\lambda_0(A,A')$. Now we can extend both $A$ and
$A''$ by $-A$ (and a parallel copy) to get pair of singular
\emph{self}-concordances $A_1$ and $A_2$ of $K$ and a parallel
copy of $K$. Again, the intersections between $-A$ and its
parallel copy occur in cancelling pairs, so the change in $\tau_1$
can be expressed as $(1,\beta,\lambda_0(A_1,A_2))$.

By Lemma~\ref{lem:straightening-lemma} and the homotopy invariance
of $\lambda_0$, $\lambda_0(A_1,A_2)$ only depends on the elements
determined by latitudes of $A_1$ and $A_2$. Since a latitude of
$A_1=A-A$ determines the trivial element $1\in\pi_1M$, $A_1$ is
homotopic to the product self-concordance $K\times I$, also by
Lemma~\ref{lem:straightening-lemma}. Since $A'$ has a latitude
representing $\beta$, and $A_2$ was formed by extending $A'$ by
$A-A$, a latitude of $A_2$ determines the element $\beta$ in the
centralizer $\zeta(\gamma)$ of $\gamma$ in $\pi_1M$. Thus, the
possible changes in $\tau_1(\cW)$ are measured by
$\lambda_0(K\times I,A_\beta)$ where $\beta$ ranges over
$\zeta(\gamma)$, and we have the following INT relations.

For $[K]=\gamma\in\pi_1M$, define the INT($K$) relations by:
$$
\mbox{INT}(K)=(1,\beta,\lambda_0(K\times I,A_\beta))
$$
where $\beta$ ranges over $\zeta(\gamma)$ and $A_\beta$ is any
singular self-concordance of $K$ having a latitude representing
$\beta$. These INT relations are completely determined by the
values on a set of generators for $\zeta(\gamma)$ as illustrated
in the next subsection.

It follows from this discussion that (assuming $\gamma$ has only
trivial Baumslag-Solitar solutions) for any singular concordance $A$
of $K$ supporting an order 1 Whitney tower $\cW$
$$
\tau_1(A):=\tau_1(\cW)\in\frac{\cT_1(\langle\gamma\rangle\backslash\pi_1M)}{\mbox{INT}(K)}
$$
does not depend on the choice of $\cW$, and is invariant under
regular homotopy (rel $\partial$) of $A$. Also, the geometric
conditions equivalent to the vanishing of previous versions of
$\tau_1$ hold in this setting since again the relations can all be
realized by geometric constructions.

\subsection{Stable concordance of knots in $F\times
S^1$}\label{subsec:stable-conc-FtimesS1} We now restrict attention
to essential knots in $M=F\times S^1$, the product of a compact
orientable surface $F\neq S^2$ with the circle. The results in
this section, together with Theorem~\ref{thm:tau(Lz,L)} for
null-homotopic knots, give a complete characterization of stable
knot concordance in this setting.

\subsubsection{The order zero relative self-linking
invariant}\label{subsubsec:essential-order-zero-rel-selflinking}
Fix a knot $K_\gamma$ in a non-trivial free homotopy class
together with a whisker such that $[K_\gamma]=\gamma\in\pi_1M$.
For any singular concordance $A$ from $K_\gamma$ to any knot $K$,
define the \emph{order zero relative self-intersection tree}
$\tau_0(K_\gamma,K)$ by:
$$
\tau_0(K_\gamma,K):=\tau_0(A)\in\cT_0(\langle\gamma\rangle\backslash\pi_1M/\langle\gamma\rangle).
$$
By \cite{S}, it is possible to choose $K_\gamma$ in each free
homotopy class so that the order zero relative self-linking with
$K_\gamma$ classifies order 1 Whitney concordance, meaning that
$\tau_0(K_\gamma,K)=\tau_0(K_\gamma,K')$ if and only if $K$ and
$K'$ are order 1 Whitney concordant, for any knots $K$ and $K'$
(freely) homotopic to $K_\gamma$. (This invariant was denoted
$\mu(K_\gamma,K)$ in \cite{S}.) Here equality is modulo
conjugation by the centralizer $\zeta(\gamma)$, i.e. modulo the
restriction to $\zeta(\gamma)$ of the $\Psi$-action on
$\cT_0(\langle\gamma\rangle\backslash\pi_1M/\langle\gamma\rangle)$.
Note that choices of whiskers on $K_\gamma$ giving
$[K_\gamma]=\gamma$ differ by loops representing elements in
$\zeta(\gamma)$, and all latitudes of singular self-concordances
of $K_\gamma$ determine elements in $\zeta(\gamma)$.

As explained in \cite{S}, the key property required of $K_\gamma$
in order for $\tau_0(K_\gamma,\cdot)$ to classify order 1 Whitney
concordance is that
$\tau_0(H)=0\in\cT_0(\langle\gamma\rangle\backslash\pi_1M/\langle\gamma\rangle)$
for any singular self-concordance $H$ of $K_\gamma$. (In the
present case $M$ is irreducible; in general the required vanishing
of $\tau_0(H)$ would be modulo intersections with 2-spheres --
hence the terminology ``$K_\gamma$ is spherical'' in \cite{S}.)
This key property can be checked on self-homotopies having
latitudes representing a set of generators for $\zeta(\gamma)$,
and can be easily satisfied by choosing $K_\gamma$ as follows.

If $\gamma$ is carried by (a multiple of) a circle fiber, then take
$K_\gamma$ to lie in a tubular neighborhood of a fiber. In this
case, $\zeta(\gamma)=\pi_1M$, and $K_\gamma$ can be isotoped around
any generating loop in $M$. If $\gamma$ is not carried by a circle
fiber, then take $K_\gamma$ in a tubular neighborhood of a knot
$K_\rho$ representing a primitive element $\rho\in\pi_1M$, with
$\gamma=\rho^n$ for some natural number $n$. In this case
$\zeta(\gamma)$ will be isomorphic to $\Z\oplus\Z$ generated by
$\rho$ and a fiber element, unless $F$ is either a 2-torus or
2-disk, and in all cases $K_\gamma$ can be isotoped around loops
representing generators of $\zeta(\gamma)$.

\subsubsection{The order 1 relative self-linking
invariant}\label{subsubsec:essential-order1-rel-selflinking} Fix
now $K_\gamma$ as above, which classifies order 1 Whitney
concordance in a non-trivial free homotopy class represented by
$[K_\gamma]=\gamma\in\pi_1M$. For any
$z\in\cT_0(\langle\gamma\rangle\backslash\pi_1M/\langle\gamma\rangle)$,
construct a clasp knot $K_z$ such that $\tau_0(K_\gamma,K_z)=z$ by
introducing clasps into $K_\gamma$ (just as was done for
null-homotopic knots). For any $K$ which is order 1 Whitney
concordant to $K_z$ (equivalently $\tau_0(K_\gamma,K)=z$), define
the \emph{order 1 relative self-intersection tree} $\tau_1(K_z,K)$
by:
$$
\tau_1(K_z,K):=\tau_1(A)\in\frac{\cT_1(\langle\gamma\rangle\backslash\pi_1M)}{\mbox{INT}(z)}
$$
where $A$ is any concordance from $K_z$ to $K$, and the INT$(z)$
relations are:
$$
0=(1,\beta^r,(1-\beta^r)(z+\overline{z}))
$$
with $\beta$ ranging over a generating set for $\zeta(\gamma)$,
and $r\in\Z$.

\begin{figure}
\centerline{\includegraphics[width=90mm]{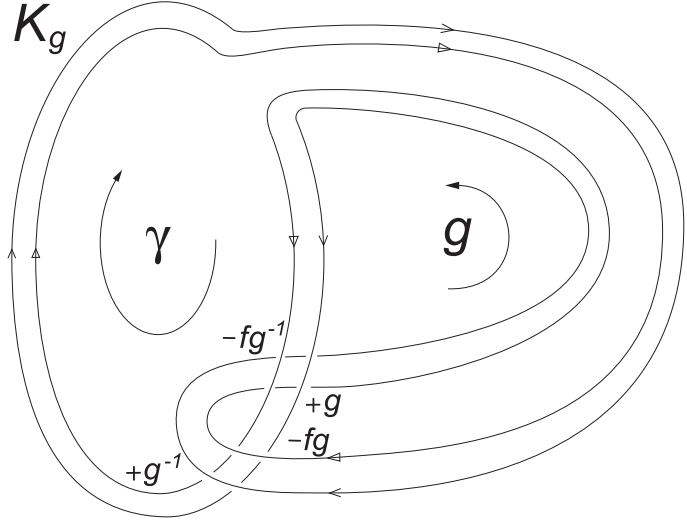}}
         \caption{A knot $K_g$ together with a parallel copy sitting in
         a slice $F\times I$ of $F\times S^1$. A self-isotopy $A_f$ of the parallel copy
         up around the circle direction creates the indicated crossing-change intersections,
         giving $\lambda_0(K_g\times I,A_f)=g+g^{-1}-fg-fg^{-1}$.}
         \label{Essential-knot-htpy-fig}
\end{figure}
To see that these are the correct INT relations
($\mbox{INT}(z)=\mbox{INT}(K_z)$ as defined above in
subsection~\ref{subsec:INT-for-essential-knots}) start by
considering Figure~\ref{Essential-knot-htpy-fig}, which illustrates
the case where $\gamma$ is primitive and $z$ is represented by a
single element $g\in\pi_1(F\times S^1)\cong \pi_1F\times\langle
f\rangle$, with both $\gamma$ and $g$ contained in the subgroup
$\pi_1F$ carried by a thickened surface slice which in the figure is
represented by the plane of the paper cross an interval. Isotoping a
parallel push-off of $K_g$ up around the circle direction describes
an annulus $A_f$ which has a latitude representing the circle
generator $f$, and the order zero non-repeating intersection tree
$$
\lambda_0(K_g\times I,A_f)=g+g^{-1}-fg-fg^{-1}=(1-f)(g+g^{-1})
$$
can be computed from crossing changes in the figure. Note that
introducing twists between $K_g$ and its parallel push-off only
creates terms of the form $\pm(1-f)$ via this construction which
do not contribute to INT$(g)$ because the 2-torsion elements
$(1,f,(1-f))=(1,f,1)-(1,f,f)$ are zero in $\cT_1$ by the AS and FR
relations.

Iterating this construction confirms the relations
$(1,f^r,(1-f^r)(g+g^{-1}))$ for the generator $f\in\zeta(\gamma)$
in this case. It is easy to see that the same computation holds if
$g$ replaced by any sum $z=\sum g_i$, even if $g_i$ are allowed to
contain powers of $f$ as factors (with corresponding clasps guided
by arcs wrapping around circle fibers). One can also check that
allowing $\gamma$ to contain factors of $f$ does not change the
computation.

Under the assumption that $\gamma$ is primitive, the computation
so far verifies the formula for the INT($z$) relations in the
``generic'' case where $\zeta(\gamma)\cong\Z\oplus\Z$ is generated
by $f$ and $\gamma$, since in this case self-homotopies with
latitudes representing $\gamma$ give only trivial relations: A
parallel push-off of the product annulus $K_z\times I$ is disjoint
from $K_z\times I$ and has a latitude representing $\gamma$.

Consider now the case where $\gamma=f$, so that $K_z$ is created
by introducing clasps into a circle fiber $K_f$. In this case $f$
is central, and for any $\beta\in\pi_1M$, a self-homotopy
$A_\beta$ of $K_z$ with a latitude representing $\beta$ and
$\lambda_0(K_z\times I,A_\beta)=(1-\beta)(z+\overline{z})$ can be
constructed as follows. Undoing the clasps on a parallel copy
$K'_z$ of $K_z$ describes a homotopy of $K'_z$ to a knot $K'_f$
which may twist around the circle fiber $K_f$. The homotopy so far
creates intersections with $K_z\times I$ of the form
$z+\overline{z}$ (half of the crossing changes in
Figure~\ref{Essential-knot-htpy-fig} for each clasp). Next, $K_f'$
can be isotoped around $M$ so that a basepoint on $K_f'$ traces
out a loop representing $\beta$; this self isotopy of $K_f'$ only
creates intersections with $K_z\times I$ of the form
$\pm(1-\beta)$ corresponding to any twists between $K_f'$ and
$K_f$ which are pulled apart and then recreated; as mentioned
above these intersections do not contribute to INT($z$). Now
reintroducing the clasps into $K_f'$ describes the last part of
the self-homotopy $A_\beta$ of $K_z$, and creates intersections of
the form $-\beta(z+\overline{z})$, so that $\lambda_0(K_z\times
I,A_\beta)=(1-\beta)(z+\overline{z})$ as desired.

The cases where $\gamma$ is not primitive are checked by
constructing self-homotopies similar to the case $\gamma=f$ in the
previous paragraph. For instance, if $\gamma=f^n$ then $K_z$ is
created from $K_{f^n}$ which is contained in a tubular
neighborhood of a circle fiber and the rest of the construction as
for the case $n=1$ applies. The same construction applies for
$\gamma=\rho^n$, with $\rho$ a primitive generator of
$\zeta(\gamma)$ and $K_z$ created from $K_{\rho^n}$ which is
contained in a tubular neighborhood of $K_\rho$; and also to any
$\gamma$ in the non-generic cases where $F$ is a torus or a disk.

Combined with Theorem~\ref{thm:tau(Lz,L)}, the following theorem
gives a classification of stable knot concordance in $F\times
S^1$:
\begin{thm}\label{thm:essential-tau(Lz,L)}
For $K_z\subset M=F\times S^1$ as above, the following are
equivalent:
\begin{enumerate}
  \item $\tau_1(K_z,K)=\tau_1(K_z,K')$.
  \item $K$ and $K'$ are stably concordant.
  \item $K$ and $K'$ are order 2 Whitney concordant.
  \item $K$ and $K'$ are height 1 Whitney concordant.
\end{enumerate}
\end{thm}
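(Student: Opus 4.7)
The plan is to adapt the proof of Theorem~\ref{thm:tau(Lz,L)} in Section~\ref{sec:tau1(L,Lz)-proof} to the essential setting, with the centralizer $\zeta(\gamma)$ playing the role of the stabilizer $\Psi_z$. Two simplifications are available here: since $F\neq S^2$ is orientable, $F\times S^1$ is aspherical, so $\pi_2M=0$ and the indeterminacy subgroup $\Phi(z)$ is trivial; and the construction preceding the theorem has already produced the analogues of the self-homotopies built in Lemma~\ref{lem:untwisted-htpies}, namely the self-homotopies $A_\beta$ of $K_z$ from subsection~\ref{subsubsec:essential-order1-rel-selflinking} whose $\tau_1$-contributions realize the generating INT$(z)$ relations.

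First I would verify that $\tau_1(K_z,K)\in\cT_1(\langle\gamma\rangle\backslash\pi_1M)/\mbox{INT}(z)$ is well-defined, following the template of subsection~\ref{subsec:tauLzL-well-defined}. Given two singular concordances $A$ and $A'$ from $K_z$ to $K$ supporting order 1 Whitney towers, the composition $A-A'$ is a singular self-concordance of $K_z$ supporting an order 1 Whitney tower. An analogue of Lemma~\ref{lem:lat-in-stabilizer} --- proved by the same strategy, using Lemma~\ref{lem:straightening-lemma} together with the \emph{spherical} property of $K_\gamma$ from \cite{S} (the vanishing of $\tau_0$ on every self-concordance of $K_\gamma$, which is built into the choice of base knot in subsection~\ref{subsubsec:essential-order-zero-rel-selflinking}) --- shows that latitudes of $A-A'$ determine an element $\beta\in\zeta(\gamma)$. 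Writing $\beta$ as a word in the chosen generators of $\zeta(\gamma)$, compose $A-A'$ with an appropriate product of the $A_\beta$'s and their inverses to obtain a self-concordance of $K_z$ with trivial latitude and whose $\tau_1$ differs from $\tau_1(A-A')$ by an element of INT$(z)$. Lemma~\ref{lem:straightening-lemma} together with $\pi_2M=0$ then shows this last concordance is regularly homotopic (rel $\partial$) to $K_z\times I$, and so has vanishing $\tau_1$; hence $\tau_1(A)-\tau_1(A')\in\mbox{INT}(z)$.

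The equivalence of (ii), (iii), and (iv) is the essential-annulus analogue of Theorem~\ref{thm:tau-for-inessential-A}, which holds because all relations in the target group --- including the new INT$(K)$ relations derived in subsection~\ref{subsec:INT-for-essential-knots} --- are geometrically realizable by controlled Whitney tower manipulations, and the Norman trick applies to convert a height 1 tower to a stable embedding. Then (iii)$\Rightarrow$(i) is immediate: extending $A$ by an order 2 Whitney concordance from $K$ to $K'$ leaves $\tau_1$ unchanged, since the additional singularities occur in cancelling pairs. For (i)$\Rightarrow$(iii), equality $\tau_1(K_z,K)=\tau_1(K_z,K')$ yields $\tau_1(A)-\tau_1(A')\in\mbox{INT}(z)$ for chosen $A,A'$; realize this difference as the $\tau_1$ of a composition $H$ of the self-homotopies $A_\beta$. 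Then $-A+H+A'$ is an order 1 Whitney concordance from $K$ to $K'$ with vanishing $\tau_1$, hence admits an order 2 Whitney tower by the essential-annulus version of Theorem~\ref{thm:tau}.

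The main obstacle will be establishing the essential analogue of Lemma~\ref{lem:lat-in-stabilizer}, i.e.\ showing that latitudes of self-concordances of $K_z$ supporting order 1 Whitney towers lie in $\zeta(\gamma)$ rather than merely in $\pi_1M$. This requires combining the coset-valued $\tau_0$ framework of subsection~\ref{subsec:order-zero-coset-decorations} with the spherical-base-knot classification of \cite{S}, and is the essential-case technical ingredient that justifies quotienting by INT$(z)$ rather than something strictly larger. Once this is secured, every other ingredient either carries over verbatim from the null-homotopic case or has been explicitly prepared in the subsections leading up to the theorem.
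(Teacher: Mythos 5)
Your overall architecture is right -- the proof does follow the template of subsections~\ref{subsec:tauLzL-well-defined} and \ref{subsec:LzLstatements-proof}, with $\Phi(z)$ trivial by irreducibility -- but you have misidentified where the work is, and the step you substitute for the real key point does not hold up. First, the fact that latitudes of self-concordances of $K_z$ lie in $\zeta(\gamma)$ is automatic and requires neither an order 1 Whitney tower nor the spherical property of $K_\gamma$: a self-concordance exhibits a free homotopy from $K_z$ to itself, so its latitude $\psi$ satisfies $\psi\gamma\psi^{-1}=\gamma$ in $\pi_1M$. This is already recorded in subsection~\ref{subsubsec:essential-order-zero-rel-selflinking}. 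The genuine analogue of Lemma~\ref{lem:lat-in-stabilizer} is the finer statement that a self-homotopy $H_\psi$ supporting an order 1 Whitney tower has $\psi\in\Psi_z\cap\zeta(\gamma)$, which follows from Lemma~\ref{lem:straightening-lemma} via $\tau_0(H_\psi)=z-\psi z\psi^{-1}=0$.

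Second, and more seriously, the analogue of Lemma~\ref{lem:untwisted-htpies} -- that for each such $\psi$ there is a self-homotopy of $K_z$ with latitude $\psi$ and vanishing $\tau_1$ -- is left unproved. Your substitute, composing $A-A'$ with products of the $A_\beta$'s and asserting the resulting $\tau_1$-difference lies in $\mbox{INT}(z)$, conflates two different objects: the $A_\beta$ of subsection~\ref{subsubsec:essential-order1-rel-selflinking} enters the INT relations only through the order zero quantity $\lambda_0(K_z\times I,A_\beta)$, which measures the effect of rechoosing Whitney disk \emph{boundaries}; it is not a computation of $\tau_1(A_\beta)$, and for general $\beta\in\zeta(\gamma)$ the self-homotopy $A_\beta$ does not even support an order 1 Whitney tower since $\tau_0(A_\beta)=z-\beta z\beta^{-1}\neq 0$ unless $\beta\in\Psi_z$. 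The paper closes this gap much more simply: since $\Psi_z$ is untwisted in $F\times S^1$, for $\psi\in\Psi_z\cap\zeta(\gamma)$ the self-isotopies of $K_\gamma$ realizing $\psi$ (around a circle fiber, or around generators of $\zeta(\gamma)\cong\Z\oplus\Z$) extend to self-\emph{isotopies} of $K_z$, which have no singularities at all and hence vanishing $\tau_1$ on the nose -- no framing or Chernov-type argument is needed in this case. With that in hand the well-definedness and the equivalence of (i) and (iii) go through exactly as you outline, and your treatment of (ii)$\Leftrightarrow$(iii)$\Leftrightarrow$(iv) via realizability of the new INT relations and the Norman trick matches the paper.
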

\begin{proof} The proof of Theorem~\ref{thm:essential-tau(Lz,L)} follows the
arguments in subsections~\ref{subsec:tauLzL-well-defined} and
\ref{subsec:LzLstatements-proof} in the proof of
Theorem~\ref{thm:tau(Lz,L)}. The key point needed is that $\tau_1$
vanishes on all self-homotopies of $K_z$ which support an order 1
Whitney tower (equivalently, have vanishing $\tau_0$). If $H_\psi$
is a self-homotopy of $K_z$ with a latitude representing $\psi$,
then $\psi\in\zeta(\gamma)$, and by
Lemma~\ref{lem:straightening-lemma} $\tau_0(H_\psi)$ only depends
on $\psi$. It follows that $\tau_0(H_\psi)=z-\psi z\psi^{-1}$,
since such $H_\psi$ can be constructed by undoing the clasps on
$K_z$ to get $K_\gamma$, then isotoping $K_\gamma$ around
$\psi\in\zeta(\gamma)$, then redoing the clasps to return to
$K_z$. Thus, such an $H_\psi$ admits an order 1 Whitney tower if
and only if $\tau_0(H_\psi)$ vanishes if and only if
$\psi\in\Psi_z\cap\zeta(\gamma)$. In the current setting, $\Psi_z$
is always untwisted, and it can be easily checked that for
$\psi\in\Psi_z\cap\zeta(\gamma)$ the self-isotopies of $K_\gamma$
realizing $\psi$ (as described above in
subsection~\ref{subsubsec:essential-order-zero-rel-selflinking})
extend to self-isotopies of $K_z$.
\end{proof}



\end{document}